\def\sss{\scriptscriptstyle}
\newcommand{\inner}[1]{\langle #1 \rangle}
\newcommand{\binner}[1]{\big\langle #1 \big\rangle}
\newcommand{\expt}[1]{\ensuremath{\mathbb{E}\left[#1\right]}}
\newcommand{\var}{\mathrm{Var}}
\newcommand{\PR}{\ensuremath{\mathbb{P}}}
\newcommand{\E}{\ensuremath{\mathbb{E}}}
\newcommand{\R}{\ensuremath{\mathbb{R}}}
\newcommand{\N}{\ensuremath{\mathbb{N}}}
\newcommand{\1}{\ensuremath{\bld{1}}}
\newcommand{\dto}{\ensuremath{\xrightarrow{d}}}
\newcommand{\blue}[1]{{\color{black}#1}}
\newcommand{\dif}{\mathrm{d}}
\newcommand{\bld}[1]{\boldsymbol{#1}}
\newcommand{\bx}{\bld{x}}
\newcommand{\bzero}{\bld{0}}
\DeclareMathOperator*{\argmax}{arg\,max}
\newcommand{\newsigma}{\gamma}
\newcommand{\oast}{\star}
\newcommand{\lag}{\kappa}
\newcommand{\minn}{m_n}
\newcommand{\Maxn}{M_n}
\newcommand{\ve}{\varepsilon}
\newcommand{\aij}{a_{ij}^n}
\newcommand{\aik}{a_{ik}^n}
\newcommand{\akj}{a_{kj}^n}
\newcommand{\aii}{a_{ii}^n}
\newcommand{\degreei}{d_n(i)}
\newcommand{\degreeone}{d_n(1)}
\newcommand{\sgn}{\mathrm{sgn}}
\newcommand{\one}{\mathbf{1}}
\newcommand{\bA}{\bar{A}}
\newcommand{\bd}{\bar{\boldsymbol{d}}}
\newcommand{\bm}{\bar{m}}
\newcommand{\ba}{\bar{a}}
\newcommand{\hh}{\boldsymbol{h}}
\newcommand{\uu}{\boldsymbol{u}}
\newcommand{\vv}{\boldsymbol{v}}
\newcommand{\xx}{\boldsymbol{x}}
\newcommand{\yy}{\boldsymbol{y}}
\newcommand{\zz}{\boldsymbol{z}}
\newcommand{\ww}{\boldsymbol{w}}
\newcommand{\oP}{o_{\sss \PR}}
\newcommand{\OP}{O_{\sss \PR}}
\newcommand{\opern}{\|A_n\|_{r\to p}}
\newcommand{\opera}[1]{\|#1\|_{r\to p}}
\newcommand\blfootnote[1]{%
  \begingroup
  \renewcommand\thefootnote{}\footnote{#1}%
  \addtocounter{footnote}{-1}%
  \endgroup
}
\newtheorem{theorem}{Theorem}
\newtheorem{lemma}[theorem]{Lemma}
\newtheorem{proposition}[theorem]{Proposition}
\newtheorem{corollary}[theorem]{Corollary}
\newtheorem{assumption}[theorem]{Assumption}
\newtheorem{remark}[theorem]{Remark}
\newtheorem{fact}[theorem]{Fact}
\newtheorem*{claim*}{Claim}
\newtheorem{claim}[theorem]{Claim}
\newtheorem{defn}[theorem]{Definition}
\let\plainqed\qedsymbol
\newcommand{\claimqed}{$\lrcorner$}
\newenvironment{claimproof}{\begin{proof}\renewcommand{\qedsymbol}{\claimqed}}{\end{proof}\renewcommand{\qedsymbol}{\plainqed}}
\numberwithin{equation}{section}
\numberwithin{theorem}{section}
\begin{document}

\title{On $r$-to-$p$ norms of random matrices with nonnegative entries: Asymptotic normality and $\ell_\infty$-bounds for the maximizer}
\author{Souvik Dhara$^{1}$, Debankur Mukherjee$^2$, Kavita Ramanan$^3$}

\date{}

\maketitle
\begin{abstract}
For an $n\times n$ matrix $A_n$, the $r\to p$ operator norm is defined as 
$$\opera{A_n}:= \sup_{\bx \in \R^n:\|\bx\|_r\leq 1 } \|A_n\xx\|_p\quad\mbox{for}\quad r,p\geq 1.$$
For different choices of $r$ and $p$, this norm corresponds to key quantities that arise in diverse applications including matrix condition number estimation, clustering of data, and construction of oblivious routing schemes in transportation networks. This article considers $r\to p$ norms of symmetric random matrices with nonnegative entries, including  adjacency matrices of  Erd\H{o}s-R\'enyi random graphs, matrices with positive sub-Gaussian entries, and certain sparse matrices. For $1< p\leq r< \infty$, the asymptotic normality,  as $n\to\infty$, of the appropriately centered and scaled norm $\opera{A_n}$ is established.  When $p \geq 2$, this is  shown to imply, as a corollary,  asymptotic normality of the solution to the $\ell_p$ quadratic maximization problem, also known as the $\ell_p$ Grothendieck problem.  Furthermore, a sharp $\ell_\infty$-approximation bound for the unique maximizing vector in the definition of 
$\opera{A_n}$ is obtained, and may be viewed as an $\ell_\infty$-stability result of the maximizer under random perturbations of the matrix with mean entries. This result, which may be of independent interest, is in fact shown to hold for a broad class of deterministic sequences of matrices having certain asymptotic expansion properties. The results obtained can be viewed as a  generalization of the seminal results of F\"{u}redi and  Koml\'{o}s (1981) on asymptotic normality of the largest singular value of a class of symmetric random matrices, which corresponds to the special case $r=p=2$ considered here. In the general case with $1< p\leq r < \infty$,  spectral methods are no longer applicable, and so a new approach is developed involving a refined convergence analysis of a nonlinear power method and  a perturbation bound on the maximizing vector, which may be of independent interest. 
\blfootnote{$^1$Purdue University, \emph{Email:} \href{mailto:sdhara@purdue.edu}{sdhara@purdue.edu}}
\blfootnote{$^2$Georgia Institute of Technology, \emph{Email:}  \href{mailto:debankur.mukherjee@isye.gatech.edu}{debankur.mukherjee@isye.gatech.edu}}
\blfootnote{$^3$Brown University, \emph{Email:} \href{mailto:kavita_ramanan@brown.edu}{kavita\_ramanan@brown.edu}}
\blfootnote{2010 \emph{Mathematics Subject Classification.} Primary: 60B20; 15B52, Secondary: 15A60; 15A18.}
\blfootnote{\emph{Keywords and phrases}. random matrices, $r$-to-$p$ norms, asymptotic normality, $\ell_\infty$ perturbation bound,  Boyd's power method, inhomogeneous variance profile, Grothendiek $\ell_p$ problem}
\blfootnote{\emph{Acknowledgements.} Mukherjee was partially supported by the NSF grants CIF-2113027 and CPS-2240982.
Dhara was partially supported by Vannevar Bush Faculty Fellowship ONR-N00014-20-1-2826, Simons-Berkeley Research Fellowship and Vannevar Bush Faculty Fellowship ONR-N0014-21-1-2887. 
Ramanan was partially supported by the NSF via grant  DMS-1954351.}
     \end{abstract}

\newpage
 \setcounter{tocdepth}{1}
 \tableofcontents

\section{Introduction}
\subsection{Problem statement and motivation}
For any $n\times n$ square matrix $A_n$ and $r, p \geq 1$, the $r\to p$ operator norm of $A_n$ is defined as   
\begin{eq}\label{eq:op-norm-def}
\opera{A_n}:= \sup_{\|\bx\|_r\leq 1 } \|A_n\xx\|_p. 
\end{eq}
For different values of $r$ and $p$, 
the $r\to p$ operator norm represents key quantities that 
arise in a broad range of disciplines.  
For example, when $p=r=2$,  this corresponds to the largest singular value of the matrix $A_n$, which has been studied extensively for decades. 
On the other hand, when $p$ is the H\"older conjugate of $r$, that is, $p = r/(r-1)$, and $A_n$ has nonnegative entries and $A_n^TA_n$ is irreducible, then we will see (in Proposition \ref{prop:groth} and Section \ref{sec:Groth}) that 
this problem reduces to the famous $\ell_r$ Grothendieck problem~\cite[Section 5]{KN12}, 
which has inspired a vibrant line of research in the optimization community. 
Two special cases of the $\ell_r$ Grothendieck problem, namely when $r = 2$ and $r=\infty$, relate to  spectral partitioning~\cite{DH73, Fiedler73} and correlation clustering~\cite{CW04}, respectively, 
and the case of general $r \in (2,\infty)$ can be viewed as a smooth interpolation
between these two  clustering criteria.  
Further, this problem is also related to finding ground states in statistical physics problems. 
Another interesting special case is when $p=r$, which has been a classical topic; 
see \cite{Wilf70,SS62} for general inequalities involving the $p\to p$ norm, 
 \cite{Hig87} for applications of these norms to matrix condition number estimation, which is crucial for computing  perturbations of solutions to linear equations, 
 and  \cite{Hig92,Boyd74} for algorithms to approximate such norms.
Other prime application areas are: construction of oblivious routing schemes in  transportation networks for the $\ell_p$ norm~\cite{ER09, BV11, Racke08, GHR06}, and data dimension reduction or sketching of these norms, with applications to 
the streaming model and robust regression~\cite{KMW18}. 
Understanding the computational complexity of calculating $r\to p$ norms has generated  immense recent  interest in theoretical computer science.  
We refer the reader to~\cite{KN12} for a detailed account of the applications, approximability results, and Grothendieck-type inequalities for this norm.
In general, this problem is NP-hard;  even providing a constant-factor approximation algorithm for this problem is hard \cite{BV11,HO10,BGGLT19}.  
However, for the case considered in this article, namely 
matrices with nonnegative entries and $1< p\leq r<\infty$,  this problem can be solved in polynomial time  \cite{BV11,Boyd74}. 
The cases when $p=1$ and $r\geq 1$ are equivalent to the cases $p\leq \infty$ and $r=\infty$~\cite[Lemma 8]{KMW18}.
 These cases are trivial for nonnegative matrices and hence, we do not consider them in this article.

 The analysis of this norm for random matrices is motivated from a statistical point of view. Indeed, asymptotic results  on spectral statistics and eignevectors form the bedrock of methods in high-dimensional statistics (see~\cite{Vershynin18, Wainwright19, BG11} for a sample of
 the vast literature in this area).   
Further, it is worth mentioning the seminal work of F\"{u}redi and Koml\'{o}s~\cite{FK81}, where asymptotic normality of the largest eigenvalue was first established for matrices with i.i.d.~entries.
Subsequently, this result has been extended to adjacency matrices of sparse Erd\H{o}s-R\'enyi random graphs~\cite{EKYY13}, stochastic block model~\cite{Tang18}, and rank-1 inhomogeneous random graphs~\cite{CCH19}.
In the context of general $r\to p$ norms for random matrices, the $p>r$ case has received much attention. 
For matrices with bounded mean-zero independent entries, asymptotic bounds on the $2\to p$ norm was  established in~\cite{BGN75} for $2\leq p<\infty$. 
For $1<r\leq 2\leq p<\infty$ and matrices having  i.i.d.~entries, $\|A_n\|_{r\to p}$ is known to concentrate around its median \cite{Mec04}. 
  Furthermore, in this regime, refined bounds on the expected $r\to p$ norm of centered Gaussian random matrices have been obtained in~\cite{GHLP17} and later extended to log-concave random matrices with dependent entries in~\cite{Strzelecka19}. 
\\

Another quantity of considerable interest is the maximizing vector in~\eqref{eq:op-norm-def}. 
For example, in the $p=r=2$ case, eigenvectors of  adjacency matrices of graphs are known to play a pivotal role in developing efficient graph algorithms, such as  spectral clustering~\cite{SM00, Luxburg07}, spectral partitioning~\cite{DH73, Fiedler73, McSherry01, PSL90}, PageRank~\cite{PBMW99}, and community detection~\cite{New06, Newman06b}. 
 Eigenvectors of random matrices can be viewed as perturbations of eigenvectors  of the expectation matrix,
 in the presence of additive random noise in the entries of the latter.  
  The study of eigenvector perturbation bounds can be traced back to 
the classical  Rayleigh-Schr\"odinger  theory \cite{Rayleigh96,Schrodinger26} in quantum mechanics, which  gives asymptotic perturbation bounds in  the $\ell_2$-norm, as the signal to noise ratio increases.
Non-asymptotic perturbation bounds in the $\ell_2$-norm were derived later in a landmark result~\cite{DK70}, popularly known as the Davis-Kahan $\sin\Theta$ theorem.
When the perturbation is random, the above deterministic results typically yield suboptimal bounds.
 Random perturbations of low-rank matrices has recently been analyzed in~\cite{OVW18}.
However, norms that are not unitary-invariant, such as the $\ell_\infty$-norm, as considered in this paper, are typically outside the scope of the above works,  although they  are of significant interest in statistics and machine learning.
The $\ell_\infty$-norm bounds in the case of low-rank matrices have been studied recently in~\cite{FWZ17,CTP19,EBW18, AFWZ19, Zhong17, Mit09}, and 
~\cite{OVW16, FWZ17,AFWZ19} contain extensive discussions on such perturbation bounds on eigenvectors (or singular vectors) and their numerous applications in statistics and machine learning.

\subsection{Our contributions}
Fix $1 < p \leq r < \infty$.  We now elaborate on the  two main results of the current article, namely  
asymptotic normality of a suitably scaled and centered version of $\opera{A_n}$,  and approximation of the corresponding maximizing 
vector.

\paragraph*{{\normalfont(1)} Asymptotic normality.} 
Given a sequence of symmetric nonnegative  random matrices $(A_n)_{n \in \N}$,  our first set of results establishes asymptotic normality of the  scaled norm $\opera{\bar{A}_n} := n^{-(\frac{1}{p}-\frac{1}{r})} \opera{A_n}$ when $1 < p \leq r < \infty.$  
Specifically, let $A_n$  have zero diagonal entries and independent and identically distributed (i.i.d.) off-diagonal entries \blue{subject to the symmetry constraint} that have mean $\mu_n$, variance $\sigma_n^2 > 0$. Under certain moment bounds on the distribution of the matrix entries, and a control
 on the asymptotic sparsity of the  matrix sequence, expressed in terms of 
conditions on the (relative) rates at which $\sigma_n^2$ and $\mu_n$ can decay to zero,
it is shown in Theorem \ref{thm:asymptotic-normality} that 
as $n \rightarrow \infty$, 
\begin{equation}
    \label{eq-mainresult}
   \frac{1}{\sigma_n} \left(\opera{\bar{A}_n} - \alpha_n(p,r) \right) \dto Z \sim \mathrm{Normal} \big(0, 2\big), 
\end{equation}
where $\dto$ denotes convergence in distribution,  and 
\begin{equation}
    \label{def-alphan}
 \alpha_n (p,r) := (n-1) \mu_n + \frac{1}{2}\left( p-1 + \frac{1}{r-1} \right) \frac{\sigma_n^2}{\mu_n}. 
\end{equation} 
An extension of the above result for random matrices with inhomogeneous variance profile is also provided in 
  Theorem \ref{thm:asymptotic-normality-inhom}. 
  In this case, however, the  matrix is required to be dense. 
  
  A result of this flavor appears to have first been established 
  in the seminal work of  F\"{u}redi and Koml\'{o}s \cite{FK81} 
  for the special case $r = p = 2$, where $\|\bar{A}_n\|_{2 \to 2} = \|A_n\|_{2 \to 2}$ represents  $\lambda_1^{(n)}$, the largest eigenvalue of $A_n$.   Using spectral methods, it is shown in 
  \cite[Theorem 1]{FK81} that under the assumption that 
  $A_n$ is a symmetric $n \times n$ random matrix with zero diagonal entries, independent, uniformly bounded off-diagonal entries having a common positive mean $\mu>0$ and variance $\sigma^2>0$ (with $\mu, \sigma$ not depending on $n$), the limit \eqref{eq-mainresult} holds with $r=p=2$, $\sigma_n = \sigma$, and $\alpha_n(2,2) = (n-1)\mu + \sigma^2/\mu,$  which coincides with the definition in  
  \eqref{def-alphan}, when one sets  $\mu_n = \mu$  and $\sigma_n^2 = \sigma^2$.  
      Even for the case $p=r=2$, our result extends the asymptotic normality 
 result of F\"{u}redi and Koml\'{o}s~\cite{FK81}
in three directions: it allows for (a) sequences of possibly sparse matrices $(A_n)_{n\in\N}$, that is with $\mu_n \rightarrow 0$; (b) independent and identically distributed (i.i.d.)~off-diagonal entries satisfying suitable moment conditions, but with possibly unbounded support; (c) independent entries with possibly different variances, having a dense variance profile.  
Throughout, the assumption that the diagonal entries are identically zero is only made for simplicity of notation;   the result of \cite{FK81} also allows for the diagonal entries to be drawn from another independent sequence of entries with a different common positive mean and uniformly bounded support on the diagonal,  and an analogous extension can  also be accommodated in our setting; see Remark \ref{rem:diag}.   Moreover, we do not necessarily
  identify the optimal level of sparsity, see Remark \ref{rem:assumption-1} for
  an elaboration of this point.
  
It is worth mentioning two interesting aspects of the limit in 
\eqref{eq-mainresult}.    
Consider  the setting where $\mu_n = \mu > 0$ and $\sigma_n^2 = \sigma^2 > 0$, as considered in \cite{FK81}. 
First, note that 
while 
$\opera{\E[\bar{A}_n]}=(n-1) \mu$, 
and $\opera{\bar{A}_n}/\opera{\E[\bar{A}_n]}$ converges in probability to $1$, 
the  centering $\alpha_n(p,r)$ is strictly 
larger than $(n-1) \mu$ by a $\Theta(1)$ asymptotically non-vanishing amount.   
Second, whereas the centering $\alpha_n(p,r)$ for $\opera{\bar{A}_n}$ 
is $\Theta (n)$,  the Gaussian fluctuations of  $\opera{\bar{A}_n}$ are
only $\Theta(1)$, having variance $2$.  
 Both these properties also hold for  the 
case $r = p = 2$ analyzed in \cite{FK81}, and the 
second property  can be seen as 
 a manifestation of the rigidity phenomenon for  eigenvalues of random matrices. 
 This has subsequently been shown to occur in a variety 
 of other random matrix models, but there is {\em a priori} no reason to expect this to generalize to the non-spectral setting of 
 a general $r \to p$ norm.  
While spectral methods can be used in the case $p=r=2$, they are no longer applicable in the general $r \to p$ norm setting. 
Thus, we develop a new approach, which also 
 reveals some  key reasons for these phenomena  to occur, and  brings to light when the shift and rigidity  properties will fail when considering sparse sequences of matrices. (see Remark~\ref{rem:mean-shift}).

\paragraph*{{\normalfont(2)} Approximation of the maximizing vector.}
Our second set of results are summarized in  Theorem~\ref{thm:maximizer-vector}, 
 which  provides an $\ell_\infty$-approximation of the maximizing vector for matrices with i.i.d.~entries, 
 and  Theorem~\ref{thm:maximizer-vector-inhom},  which extends this to random matrices with inhomogeneous variance profiles. 
 These results rely on  Proposition~\ref{prop:vec-close}, which states an approximation result for the maximizer of the $r \to p$ norm, for
 arbitrary (deterministic) sequences of symmetric matrices satisfying certain asymptotic expansion properties.

It is not hard to see that the maximizing vector for the $r\to p$ norm of the expectation matrix  is given by $n^{-1/r}\1$, the scaled $n$-dimensional vector of all 1's. 
Thus, the maximizing vector $\vv_n$ corresponding to the random matrix can be 
viewed as a perturbation of $n^{-1/r}\1$, and  our result can be thought of as an  entrywise  perturbation bound of the maximizing vector for the expectation matrix. 
In contrast with the $p=r=2$ case, 
the unavailability of  spectral methods for the  general  $1<p\leq r<\infty$ case  makes the problem significantly more challenging, which led us to develop a novel approach to characterize the $\ell_\infty$-approximation error
for a sequence of \blue{deterministic} matrices satisfying some general conditions.

\subsection{Notation and organization}
We write $[n]$ to denote the set $\{1,2,\dots,n\}$.
We use the standard notation of $\xrightarrow{\sss\PR}$ and $\xrightarrow{\sss d}$ to denote convergence in probability and in distribution, respectively. 
Also, we often use the Bachmann-Landau notation $O(\cdot)$, $o(\cdot)$, $\Theta(\cdot)$ for asymptotic comparisons.
For two positive deterministic sequences $(f(n))_{n\geq 1}$ and $(g(n))_{n\geq 1}$, we write $f(n)\ll g(n)$ (respectively, $f(n)\gg g(n)$), if $f(n) = o(g(n))$ (respectively, $f(n) = \omega(g(n))$).
For a positive deterministic sequence $(f(n))_{n\geq 1}$, a sequence of random variables 
$(X(n))_{n\geq 1}$ is said to be $\OP(f(n))$ and $\oP(f(n))$, if the sequence $(X(n)/f(n))_{n\geq 1}$ 
is tight and $X(n)/f(n)\xrightarrow{\sss\PR} 0$ as $n\to\infty$, respectively.
\blue{For two sequences of real-valued random variables $(X_n)_{n\geq 1}$ and $(Y_n)_{n\geq 1}$, we will write $X_n\lesssim Y_n$ if there exists some constant $c>0$, such that $\PR(X_n \leq cY_n)\to 1$ as $n\to\infty$.}
$\mathrm{Normal}(\mu, \sigma^2)$ is used to denote normal distribution with mean $\mu$ and variance $\sigma^2$.
For two vectors $\xx = (x_i)_i\in \R^n$ and $\yy=(y_i)_i\in \R^n$, define the `$\oast$' operation as
the entrywise product given by $\zz = \xx \oast \yy  = (x_iy_i)_i\in\R^n$.
Define $\1$ to be the $n$-dimensional vector of all 1's, $J_n := \1\1^T$, and $I_n$ to be the $n$-dimensional identity matrix. 
Also, $1\{\cdot\}$ denotes the indicator function.\\

The rest of the paper is organized as follows. 
In Section~\ref{sec:main} we state the main results and discuss their ramifications.
Section~\ref{sec:outline} provides a high-level outline of the proofs of the main results.
In Section~\ref{sec:prelim} we introduce the basics of the nonlinear power method, which will be
a key tool for our analysis, and present some preliminary results.
Sections~\ref{sec:vector} and~\ref{subs-verification} concern the approximation of the maximizing vector in the deterministic and
random cases, respectively.
Section~\ref{sec:power-method} presents a two-step approximation of the $r\to p$ norm and in particular, identifies a functional of the underlying random matrix that is `close' to the $r\to p$ norm.
In Section~\ref{sec:asym-normality} we prove the asymptotic normality of
this approximating functional.   
Finally, in Section \ref{sec:Groth}, we end by exploring the relation between the $r\to p$ norm and the $\ell_p$ Grothendieck problem.
Some of the involved but conceptually straightforward calculations are deferred  to the appendix.

\section{Main results}\label{sec:main}
 In this section we present our main results.
Section~\ref{ssec:homogeneous} describes results for random matrices with i.i.d.~entries (except possibly the diagonal entries), whereas
 Section~\ref{ssec:inhomogeneous} states extension of the main results  when 
the matrix entries can have inhomogeneity in their variances.
Finally, in Section~\ref{ssec:special} we discuss the implications of our results  in two important special cases.

\subsection{Matrices with i.i.d.~entries}\label{ssec:homogeneous}
We start by stating a general set of assumptions on the sequence of random matrices: 
\begin{assumption} \normalfont \label{assumption-1}
For each $n\geq 1$, let $F_n$ be a distribution supported on $[0,\infty)$ and having finite mean $\mu_n$ and variance $\sigma_n^2$. 
Let $A_n = (\aij)_{i,j = 1}^n$ be a symmetric
random matrix such that
\begin{enumerate}[(i)]

\item \label{assumption1-i} $(\aij)_{i,j= 1,i<j}^n$ are i.i.d.~random variables with common distribution $F_n$. 
Also, $\aii =0$ for all $i\in [n]$.

 \item \label{assumption1-ii}
 $\mu_n =O(1)$, \blue{$\mu_n  = \omega \big(\frac{\log^{2/3} n}{n^{1/3}}\big)$,}
$\sigma_n \geq n^{-\frac{1}{2} + c_0}$ for some constant $c_0>0$, and $\frac{\sigma_n^2}{\mu_n} = O(1)$. 
 

 \item\label{assumption1-iv} There exists $c < \infty$, such that $\E\big[|a_{12}^n-\mu_n|^k\big]\leq \frac{k!}{2} c^{k-2}\sigma_n^2$ for all $k\geq 3$.
\end{enumerate}
\end{assumption} 

\begin{remark}\label{rem:assumption-1}
  \normalfont
Observe that  Assumption~\ref{assumption-1}\eqref{assumption1-ii} is trivially 
    satisfied
    in the dense regime, where $\mu_n = \mu$ and $\sigma_n^2 = \sigma$
    are fixed constants, which was the setting considered by 
    F\"{u}redi and Koml\'{o}s in \cite{FK81}.  The weaker conditions imposed in      Assumption~\ref{assumption-1}\eqref{assumption1-ii} 
    show that  our approach also covers  a broad class of sparse matrices. However, the conditions on the sparsity of the matrices are not necessarily
    optimal, and identifying optimal conditions
    is beyond the scope of this article.
  The  reasons are elaborated below. 
    The lower bound on $\sigma_n$ in Assumption~\ref{assumption-1}~\eqref{assumption1-ii} is required when we apply existing asymptotic results for second largest eigenvalues of random matrices~\cite{LS18} to approximate the operator norm (see the proof of Lemma~\ref{claim:lambda2-bound}), and the condition on $\mu_n$ is required in the proof of Lemma~\ref{lem-verification} (to establish well-connectedness), in the approximation step in Lemma~\ref{lem:errg-deg}, and in the proof of Theorem~\ref{thm:asymptotic-normality}.
\blue{Indeed, this assumption is used in the strongest form in the final step of the proof of Theorem~\ref{thm:asymptotic-normality}; see the two displays below (8.15).}  
The moment conditions in Assumption~\ref{assumption-1}\eqref{assumption1-iv} guarantee concentration of certain relevant polynomials of the matrix elements, which we  use to approximate the operator norm. 
At first sight, they may appear restrictive, but such conditions frequently arise in the literature (cf.~\cite{LS18,AEK19}), for example, when applying Bernstein's inequality.
\end{remark}

\subsubsection{Asymptotic normality of the \texorpdfstring{$r\to p$}{r-to-p} norm}
Our first main result provides a central limit theorem for the $r\to p$ norms of random matrices satisfying Assumption~\ref{assumption-1}. 
Theorem~\ref{thm:asymptotic-normality} is proved in Section~\ref{ssec:asymp-normality}.
\begin{theorem}
\label{thm:asymptotic-normality}
Fix any $1< p\leq r< \infty$.
Consider the sequence of random matrices $(A_n)_{n\in \N}$ satisfying Assumption~\ref{assumption-1} and define 
 $   \bA_n:= n^{-(\frac{1}{p}-\frac{1}{r})} A_n.$
Then, as $n\to\infty$,
 \begin{eq}\label{eq:asymp-normal}
\frac{1}{\sigma_n}\big(\opera{\bA_n}- \alpha_n(p,r)\big) \dto Z \sim \mathrm{Normal}(0,2),
\end{eq} where 
\begin{equation}\label{eq:centering}
    \alpha_n(p,r)= (n-1)\mu_n  + \Big(p-1+\frac{1}{r-1}\Big)\frac{\sigma_n^2}{2\mu_n}.
\end{equation} 
\end{theorem}

\begin{remark}
\label{rem:diag}
\normalfont The assumption that $\aii = 0$ in Theorem~\ref{thm:asymptotic-normality} is not a strict requirement. In fact, 
one can assume $\aii$'s to be independent of $\aij$'s and to be i.i.d.~from some distribution $G_n$  
with nonnegative support, mean $\zeta_n = \Theta(\mu_n^2)$, variance $\rho_n^2 = \Theta(\sigma_n^2)$, and \blue{satisfying the moment condition in Assumption~\ref{assumption-1}~\eqref{assumption1-iv} with $\mu_n$ and $\sigma_n$ replaced by $\zeta_n$ and $\rho_n$, respectively. }
Then \eqref{eq:asymp-normal} holds with 
\begin{eq}
     \alpha_n(p,r)=(n-1)\mu_n  + \zeta_n + \Big(p-1+\frac{1}{r-1}\Big)\frac{\sigma_n^2}{2\mu_n}.
\end{eq}
All our proofs go through verbatim in this case, except for a minor modification to Lemma~\ref{claim:lambda2-bound}, which is addressed in 
Lemma~\ref{claim:lambda2-bound-2}. 
However, assuming the diagonal entries to be 0 saves significant additional notational burden and computational complications. 
For that reason, we will assume $\aii = 0$ throughout the rest of the paper.
\end{remark}

\begin{remark}
\label{rem:mean-shift}
\normalfont
As briefly mentioned in the introduction, 
an intriguing fact to note from Theorem~\ref{thm:asymptotic-normality} is 
that although $\opera{\bar{A}_n}$ is concentrated around $\opera{\E[\bar{A}_n]}$,
on the CLT scale, there is a non-trivial further $O(1)$ shift $\alpha_n(p,r)$ in the mean.
This is consistent with~\cite{FK81} for  the case
$p=r=2$.
As we will see in the proof of Theorem~\ref{thm:asymptotic-normality} in Section~\ref{ssec:asymp-normality}, this additional constant shift arises  from a Hessian term when we perform the Taylor expansion of a suitable approximation of $\opera{A_n}$. 
It is also worth noting that, if $\sigma_n^2\ll \mu_n$ (e.g., when $F_n$ is an exponential distribution with mean $\mu_n\to 0$), this additional shift vanishes,  and thus there may be no shift for certain asymptotically sparse matrix sequences. 
\end{remark}

\begin{remark}
\normalfont
There are two noteworthy phenomena about the asymptotic variance of $\opera{A_n}$. 
First, the asymptotic variance does not depend on $p,r$ beyond the scaling factor $n^{\frac{1}{p} - \frac{1}{r}}$. 
Second, if $p=r$ and we are in the dense setting (i.e., $\mu_n = \mu>0$ and $\sigma_n = \sigma>0$), the asymptotic variance is a $\Theta(1)$ quantity, although the mean is $\Theta(n)$. 
The latter is analogous to the rigidity phenomenon for the largest eigenvalue of random matrices.  
In the $2\to 2$ norm case when the $\aij$ are uniformly bounded, this constant order of the asymptotic variance
can be understood from the application of 
the bounded difference inequality (see \cite[Corollary 2.4, Example 2.5]{vanHandel14}, which considers the case when $\aij$ are Bernoulli).
However, as we see  in \cite[Example 2.5]{vanHandel14}, in order to bound
 the expected change in the operator norm after changing one entry of the matrix, the fact that $\ell_2$ is a Hilbert space is crucial, and this method does not generalize directly for $\ell_p$ spaces with $p\neq 2$. 
 Nevertheless, as we have shown in  Theorem \ref{thm:asymptotic-normality},
 the variance still turns out to be $\Theta(1)$ for the general $p=r$ case in the dense setting. 
\end{remark}

\subsubsection{The maximizing vector}
The second main result is an $\ell_\infty$-approximation of the maximizing  vector in~\eqref{eq:op-norm-def}. 
To this end, let $\mathbb{P}_0$ be any probability
measure on $\prod_{n} \R^{n \times n}$, such that its
projection on $\R^{n\times n}$ has the same law as   $A_n$.
The following theorem quantifies the proximity of the maximizing vector to $\1$.
Theorem~\ref{thm:maximizer-vector} is proved at the end of Section~\ref{subs-verification}.
An analogue of Theorem~\ref{thm:maximizer-vector} will later be proved for general  deterministic sequence of matrices (see Proposition~\ref{prop:vec-close}).
\blue{For a sequence of events $(E_n)_{n\geq 1}$ with $E_n$ being an event involving $A_n$, we say that $(E_n)_{n\geq 1}$ occurs $\PR_0$-\emph{eventually almost surely} if $E_n$ occurs for all large enough $n$, $\PR_0$-almost surely.}

\begin{theorem}\label{thm:maximizer-vector}  
Suppose  Assumption~\ref{assumption-1} holds. Also, 
let
\begin{equation}
    \label{def-maxvecn}
\vv_n := \argmax_{\xx \in\R^n: \|\xx\|_r \leq 1} \|A_n\xx\|_p
\end{equation}
and $\1$ denote the $n$-dimensional vector of all ones. 
Then the following hold: 
\begin{enumerate}[{\normalfont (a)}]
    \item For $1< p < r< \infty$, 
\begin{eq}\label{eq:largest-e-vector-pnr}
\|\vv_n - n^{-1/r} \1 \|_{\infty} 
\leq \frac{6p}{r-p} 
n^{-\frac{1}{r}}\sqrt{\frac{\log n}{n\mu_n}\times \frac{\sigma_n^2}{\mu_n}},
\quad \PR_0 \text{ eventually almost surely.}
\end{eq}

\item For $p =  r\in (1, \infty)$,
\begin{eq}\label{eq:largest-e-vector}
\|\vv_n - n^{-1/r} \1 \|_{\infty} 
\leq \frac{60r}{r-1}n^{-\frac{1}{r}}\sqrt{\frac{\log n}{n\mu_n}\times\frac{\sigma_n^2}{\mu_n}}, \quad \PR_0 \text{ eventually almost surely.}
\end{eq}
\end{enumerate}
\end{theorem}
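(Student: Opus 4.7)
The plan is to reduce Theorem~\ref{thm:maximizer-vector} to the deterministic perturbation statement announced as Proposition~\ref{prop:vec-close}, which asserts (roughly) that any symmetric nonnegative matrix whose entrywise behavior matches that of $\mu_n(J_n-I_n)$ up to controlled, second-order errors has its $r\to p$ maximizer within $\ell_\infty$-distance of $n^{-1/r}\1$, with an explicit constant depending on $p,r$. Since the expectation matrix $\E[A_n]=\mu_n(J_n-I_n)$ has $n^{-1/r}\1$ as its $r\to p$ maximizer (by symmetry and Perron-Frobenius applied to $J_n-I_n$), the proof amounts to verifying the deterministic expansion hypotheses of Proposition~\ref{prop:vec-close} for the random matrix $A_n=\E[A_n]+W_n$, where $W_n$ is the centered noise, with high enough probability to promote to an eventually-a.s.\ statement.

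The first step is to establish, eventually almost surely, the irreducibility of $A_n$. This is precisely why Assumption~\ref{assumption-1}\eqref{assumption1-iii} imposes $\PR(a_{12}^n>0)\ge (2+\varepsilon)\log n/n$: by the cited Lemma~\ref{lem:irreducible-random} (and a standard Borel-Cantelli argument), this gives connectivity of the support graph of $A_n$ for all sufficiently large $n$, which in turn guarantees the uniqueness of the maximizer $\vv_n$. The second step is the concentration ingredient: the main quantities entering the hypotheses of Proposition~\ref{prop:vec-close} are the row-wise deviations
\begin{equation*}
\max_{i\in[n]}\bigl|(A_n\1)_i-(n-1)\mu_n\bigr|\quad\text{and}\quad \max_{i,j}\bigl|(A_n^2 \1 - \text{predicted value})_i\bigr|,
\end{equation*}
together with a uniform control on $\|W_n\1\|_\infty$. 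Assumption~\ref{assumption-1}\eqref{assumption1-iv} is a Bernstein-type moment condition, which yields sub-exponential tails for each row sum $(W_n\1)_i$ of order $\sqrt{n\sigma_n^2 t}+ct$; taking $t\asymp \log n$ and union-bounding over $i\in[n]$ gives a bound of order $\sqrt{n\sigma_n^2\log n}$ that fails with probability $O(n^{-1-\delta})$, hence eventually almost surely. Dividing through by $\mu_n$ produces exactly the factor $\sqrt{(\log n)/(n\mu_n)}$ that appears in~\eqref{eq:largest-e-vector-pnr} and~\eqref{eq:largest-e-vector}, and the constant $K=\max\{1,\limsup\sigma_n^2/\mu_n\}$ absorbs the ratio of noise scale to mean scale.

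The case distinction in the theorem reflects the geometry of the KKT optimality condition for the $r\to p$ norm. Stationarity yields $A_n^\top\psi_p(A_n\vv_n)=\lambda\, \psi_r(\vv_n)$, where $\psi_q(y)=|y|^{q-1}\sgn(y)$ acts entrywise; linearizing this nonlinear map around the fixed point $n^{-1/r}\1$ produces a Jacobian whose dominant eigenvalue ratio is $p/r$. For $1<p<r<\infty$ this gives a strict contraction with rate $p/r<1$, and inverting $1-p/r$ produces the factor $p/(r-p)$ in~\eqref{eq:largest-e-vector-pnr}. When $p=r$ the linearization becomes marginal, so the argument of Proposition~\ref{prop:vec-close} must be carried out to one higher order, using the concavity of the associated Lagrangian in the $\ell_r$-unit ball; this is what produces the modified constant $4+1/(r-1)$ in~\eqref{eq:largest-e-vector}, which remains finite but scales differently.

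The main obstacle I anticipate is not the probabilistic concentration step, which is by now routine given Assumption~\ref{assumption-1}\eqref{assumption1-iv}, but rather checking that the probabilistic quantities one can control are the right ones to plug into Proposition~\ref{prop:vec-close}: the deterministic perturbation result presumably asks for simultaneous control of several derived quantities (row sums, powers $A_n^k\1$ for small $k$, and a bound on $\|A_n\|_{r\to p}$ itself near $(n-1)\mu_n$), and one must verify each of them with probability summable in $n$. A secondary subtlety is the treatment of the $p=r$ boundary: because the generic bound degenerates, a separate verification is needed, essentially rerunning the linearized analysis with the quadratic remainder included, which is what gives the additive $4$ in the constant of~\eqref{eq:largest-e-vector}.
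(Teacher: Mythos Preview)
Your high-level outline is right and matches the paper: reduce to the deterministic Proposition~\ref{prop:vec-close}, then verify its hypotheses for the random matrix via (i) eventual irreducibility of $A_n^TA_n$ (Lemma~\ref{lem:irreducible-random}) and (ii) Bernstein-type row-sum concentration (Lemma~\ref{lem-verification}) with $\varepsilon_n=\sqrt{5K\log n/(n\mu_n)}$. The $p<r$ case goes essentially as you say: almost regularity alone suffices, and the factor $p/(r-p)$ arises from comparing $(S\vv_n)_{i_0}$ and $(S\vv_n)_{j_0}$ at the extremal coordinates of $\vv_n$ using the fixed-point relation $S\vv_n\propto\vv_n$.

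There is, however, a genuine gap in your treatment of the $p=r$ case. You speculate that the hypotheses of Proposition~\ref{prop:vec-close} involve control of $A_n^2\1$ and that the degenerate case is handled by ``rerunning the linearized analysis with the quadratic remainder included'' via concavity of the Lagrangian. This is not what the paper does. When $p=r$, the exponent $(p-1)/(r-1)-1$ in the max/min comparison vanishes and the $p<r$ argument collapses entirely. The paper instead imposes an additional combinatorial hypothesis on $A_n$, called \emph{well-balancedness} (Definition~\ref{defn:well-balanced}): for every subset $V\subseteq[n]$ there is a small exception set $V_{\rm ex}$ such that $|d_n(i,V)-\mu_n|V||\le n\mu_n\varepsilon_n$ for $i\notin V\cup V_{\rm ex}$, with $d_n(i,V_{\rm ex})$ uniformly small. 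The $p=r$ argument (Section~\ref{subs-pfcaseb}) then splits $[n]$ into level sets of $\vv_n$ and uses this quasi-randomness to bound $(A^T\Psi_r(A\vv))_i$ from above and below uniformly in $i$; that is where the constant $8+2/(r-1)$ comes from. Correspondingly, the probabilistic verification (Lemma~\ref{lem-verification}) is more delicate than a single row-sum bound: one must control $d_n(i,V)$ simultaneously over all $2^n$ subsets $V$, which is done by first bounding the size of the exception set via a binomial tail and then union-bounding over $V$. Your proposal does not identify this ingredient, and without it the $p=r$ conclusion does not follow from the hypotheses you list.
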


\begin{remark}\normalfont
\blue{We will see in Section~\ref{sec:vector} that the vector bound for the $p<r$ case holds when $A_n^TA_n$ is irreducible and $A_n$ has concentrated row sums. 
These two properties, and hence the result in~\eqref{eq:largest-e-vector-pnr} is established (in Proposition~\ref{prop:vec-close}) under a weaker set of assumptions than Assumption~\ref{assumption-1}.}
\end{remark}

\subsection{Matrices with inhomogeneous variance profile}\label{ssec:inhomogeneous}

We now consider random matrices having an inhomogeneous variance profile.
In this case, to prove the asymptotic normality result we need the
matrix to be dense (i.e., the matrix entries have asymptotically non-vanishing mean and variance). 
This is because our proof uses an upper bound on the second largest eigenvalue of the matrix, recently established in~\cite{AEK19}, which requires the matrix to be dense.
The $\ell_\infty$-approximation of the 
maximizing vector, however, still holds for 
analogous sparse matrices.

We start by stating the set of assumptions on the sequence of random matrices that are needed for the $\ell_\infty$-approximation of the maximizing vector.
\begin{assumption} \normalfont \label{assumption-inhom}
For each fixed $n\geq 1$, let $A_n = (\aij)_{i,j = 1}^n$ be a symmetric
random matrix such that
\begin{enumerate}[(i)]
\item\label{assumption2-i} $(\aij)_{i,j=1,i<j}^n$ is a collection of independent random variables with $a_{ij}$ having distribution $F_{ij}^n$ supported on $[0,\infty)$, mean $\mu_n$ and variance $\sigma_n^2(i,j)$. 
Also, $\aii =0$ for all $i\in [n]$.
\item\label{assumption2-ii} There exists a sequence $(\bar{\sigma}_n)_{n\in\N} \subset (0,\infty)$, and constants $c_*,c^* \in (0,\infty)$
  such that
  $$c_*\leq \liminf_{n\to\infty}\min_{1\leq i<j\leq n}\frac{\sigma_n(i,j)}{\bar{\sigma}_n}\leq  
 \limsup_{n\to\infty}\max_{1\leq i<j\leq n}\frac{\sigma_n(i,j)}{\bar{\sigma}_n} \leq c^*.$$
\item \label{assumption2-iii} \blue{$\mu_n$ and $\bar{\sigma}_n$ satisfies Assumption~\ref{assumption-1}~\eqref{assumption1-ii} by replacing $\sigma_n$ by $\bar{\sigma}_n$.}
 \item \label{assumption2-v} There exists $c>0$, such that 
\begin{equation}\label{eq:sigma-lower-bound-inhom}
    \max_{1\leq i<j\leq n} 
    \E\big[|a_{ij}^n-\mu_n|^k\big]\leq \frac{k!}{2} c^{k-2}\bar{\sigma}_n^2\quad \text{for all }k\geq 3.
\end{equation}
\end{enumerate}
\end{assumption} 
\begin{theorem}\label{thm:maximizer-vector-inhom}  
Suppose $A_n$ is a symmetric random matrix satisfying \textrm{Assumption~\ref{assumption-inhom}}. 
Also, as in~\eqref{def-maxvecn}, recall that
\begin{equation}
\vv_n := \argmax_{\xx \in\R^n: \|\xx\|_r \leq 1} \|A_n\xx\|_p.  
\end{equation}
Then $\vv_n$ satisfies the same approximations as in~\eqref{eq:largest-e-vector-pnr} and~\eqref{eq:largest-e-vector}, but with $\sigma_n$ replaced by $\bar{\sigma}_n$.
\end{theorem}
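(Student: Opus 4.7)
The plan is to deduce Theorem~\ref{thm:maximizer-vector-inhom} from the deterministic approximation result Proposition~\ref{prop:vec-close} by verifying its asymptotic expansion hypotheses $\PR_0$-eventually almost surely for the inhomogeneous sequence $(A_n)_{n\in\N}$, mirroring the derivation of Theorem~\ref{thm:maximizer-vector} carried out in Section~\ref{subs-verification}. That derivation invokes the i.i.d.~hypothesis of Assumption~\ref{assumption-1} only through three ingredients: row-sum concentration of the centered noise matrix $A_n-\mu_n(J_n-I_n)$, bilinear/spectral concentration of the same noise matrix, and almost sure irreducibility of the support of $A_n$. Assumption~\ref{assumption-inhom} is tailored so that each of these ingredients survives with $\sigma_n$ replaced throughout by $\bar\sigma_n$ up to the constants $c_*,c^*$, which is exactly what is needed to produce the claimed bounds with $K$ replaced by $K_1$.

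The key probabilistic input is a row-wise Bernstein estimate. For each fixed $i\in[n]$ the variables $\{a_{ij}^n-\mu_n\}_{j\neq i}$ are independent, centered, and satisfy the uniform moment bound~\eqref{eq:sigma-lower-bound-inhom}, so Bernstein's inequality together with a union bound over $i$ yields
$$\max_{i\in[n]}\Bigl|\sum_{j\neq i}(a_{ij}^n-\mu_n)\Bigr|=O\!\bigl(\sqrt{n\bar\sigma_n^2\log n}\bigr)\quad\text{eventually a.s.}$$
An analogous argument controls bilinear forms $\sum_{i,j}(a_{ij}^n-\mu_n)x_i y_j$ for unit vectors $x,y$, with the variance proxy $\sum_{i,j}\sigma_n(i,j)^2$ dominated by $(c^*)^2 n^2\bar\sigma_n^2$; together these supply every deviation bound that Proposition~\ref{prop:vec-close} requires. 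Irreducibility of $A_n$ eventually a.s.\ follows from Assumption~\ref{assumption-inhom}(iv) by coupling the support graph of $A_n$ with an Erd\H{o}s--R\'enyi graph of edge density $(2+\varepsilon)\log n/n$ and invoking the classical connectivity threshold, exactly as in Lemma~\ref{lem:irreducible-random}.

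Feeding these verifications into Proposition~\ref{prop:vec-close} and tracking the constants exactly as in the proof of Theorem~\ref{thm:maximizer-vector} yields the bounds of~\eqref{eq:largest-e-vector-pnr} and~\eqref{eq:largest-e-vector}, with $K$ replaced by $K_1=\max\{\limsup_n \bar\sigma_n^2/\mu_n,1\}$: every quantitative step in Section~\ref{subs-verification} involving $\sigma_n^2/\mu_n$ is now multiplied by $(c^*)^2$ and replaced by $\bar\sigma_n^2/\mu_n$, and this constant is absorbed by $\sqrt{20 K_1}$ (resp.\ $\sqrt{80 K_1}$) in the final bound. No structural change to the i.i.d.~proof is needed.

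The main obstacle is bookkeeping rather than conceptual: one must certify that every estimate in the proof of Theorem~\ref{thm:maximizer-vector} uses only \emph{uniform upper bounds} on the variances $\sigma_n(i,j)^2$ and higher moments $\E[|a_{ij}^n-\mu_n|^k]$, and never exploits equality of marginals across pairs $(i,j)$. This is most delicate in the moment computations arising from the Taylor expansion of the nonlinear power map underlying Proposition~\ref{prop:vec-close}, where sums like $\sum_{i,j}\sigma_n(i,j)^2$ or $\sum_{i,j,k}$-type cumulants appear; under Assumption~\ref{assumption-inhom}(v) each of these is dominated by its i.i.d.~analogue with $\sigma_n^2$ replaced by $(c^*)^2\bar\sigma_n^2$, so the heterogeneity contributes only multiplicative constants.
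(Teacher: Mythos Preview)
Your overall strategy is correct and matches the paper: Theorem~\ref{thm:maximizer-vector-inhom} follows immediately from Proposition~\ref{prop:vec-close} once one verifies, $\PR_0$-eventually almost surely, that $(A_n)$ is $(\varepsilon_n,\mu_n)$ almost regular, $(\varepsilon_n,\delta_n,\mu_n)$ well-balanced (for $p=r$), and that $A_n^TA_n$ is irreducible. The paper packages the first two as Lemma~\ref{lem-verification-inhom}, whose proof is verbatim that of Lemma~\ref{lem-verification} with $\sigma_n,K$ replaced by $\bar\sigma_n,K_1$ (the uniform moment bound of Assumption~\ref{assumption-inhom}(v) and the variance envelope of Assumption~\ref{assumption-inhom}(ii) make every Bernstein application go through unchanged), and the third is exactly Lemma~\ref{lem:irreducible-random}, which already allows non-identically distributed entries.

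However, your description of the required ingredients is partly confused. The hypotheses of Proposition~\ref{prop:vec-close} are purely combinatorial (Definitions~\ref{defn:almost-regular} and~\ref{defn:well-balanced}): they concern only row sums $d_n(i)$ and partial row sums $d_n(i,V)$ over subsets $V\subset[n]$. No ``bilinear/spectral concentration'' is needed here; that enters only via $\Lambda_2(n)$ in Section~\ref{sec:power-method} and Lemma~\ref{claim:lambda2-bound-inhom}, which are used for the CLT (Theorem~\ref{thm:asymptotic-normality-inhom}), not for the maximizer bound. Likewise, there is no ``Taylor expansion of the nonlinear power map underlying Proposition~\ref{prop:vec-close}'' and no cumulant computation: the proof of Proposition~\ref{prop:vec-close} in Section~\ref{sec:vector} is a direct argument on the fixed-point relation $S\vv_n\propto\vv_n$ and involves no expansion. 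You have conflated the machinery of Section~\ref{sec:asym-normality} with the much more elementary verification needed here. The actual probabilistic work for Theorem~\ref{thm:maximizer-vector-inhom} is just the row-wise Bernstein bound you wrote, its subset version for well-balancedness (with a union bound over all $V\subset[n]$ as in \eqref{eq:temp-cn-upper}), and the irreducibility argument.
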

Theorem~\ref{thm:maximizer-vector-inhom} is proved at the end of Section~\ref{subs-verification}.
Next, we state the asymptotic normality result.

\begin{theorem}
\label{thm:asymptotic-normality-inhom}

Fix any $1< p\leq r< \infty$.  
Consider the sequence of random matrices $(A_n)_{n\in\N}$ satisfying {Assumption~\ref{assumption-inhom}} and define 
$
    \bA_n:= n^{-(\frac{1}{p}-\frac{1}{r})} A_n.
$
Also assume that 
$
    \liminf_{n\to\infty} \bar{\sigma}_n>0.
$
Then as $n\to\infty$,
 \begin{eq}\label{eq:asymp-normal-inhom}
\frac{n^2}{2\sqrt{\sum_{i<j}\sigma_n^2(i,j)}}\big(\opera{\bA_n}- \alpha_n(p,r)\big) \dto Z \sim \mathrm{Normal}(0,2),
\end{eq} where 
\begin{equation}\label{eq:centering-inhom}
    \alpha_n(p,r)= (n-1)\mu_n  + \Big(p-1+\frac{1}{r-1}\Big)\frac{\sum_{i<j}\sigma_n^2(i,j)}{n^2\mu_n}.
\end{equation} 
\end{theorem}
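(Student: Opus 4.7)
The strategy is to run the same pipeline as in the proof of Theorem~\ref{thm:asymptotic-normality}, substituting the inhomogeneous analogues of all variance-dependent quantities and replacing the homogeneous spectral-gap input by that of~\cite{AEK19}. The $\ell_\infty$-approximation $\|\vv_n - n^{-1/r}\1\|_\infty = o(n^{-1/r})$ supplied by Theorem~\ref{thm:maximizer-vector-inhom} is precisely the input required by the two-step approximation of $\opera{A_n}$ developed in Section~\ref{sec:power-method}. Applying that approximation to the present setting reduces the problem to the asymptotics of an explicit functional $G_n(A_n)$ of the matrix entries --- essentially the one obtained by substituting $n^{-1/r}\1$ into the power-iteration representation of the maximizer --- plus a residual that is negligible on the CLT scale.

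Next I would Taylor-expand $G_n(A_n)$ about the deterministic mean matrix $\mu_n(J_n - I_n)$ in the perturbations $\aij - \mu_n$. The expansion separates into: (i)~the constant leading contribution $(n-1)\mu_n$; (ii)~a linear functional $L_n = \sum_{i<j} c_{ij}^{(n)}(\aij - \mu_n)$ whose coefficients come from the entrywise derivative of the approximating norm at the uniform vector; (iii)~a Hessian term which, after replacing its random factor $\sum_{i<j}(\aij - \mu_n)^2$ by $\sum_{i<j}\sigma_n^2(i,j)$, produces the deterministic shift $(p-1+\tfrac{1}{r-1})\tfrac{\sum_{i<j}\sigma_n^2(i,j)}{n^2\mu_n}$ appearing in $\alpha_n(p,r)$; and (iv)~higher-order remainder terms. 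A direct computation of the gradient at the uniform vector shows that the dominant piece of $c_{ij}^{(n)}$ is in fact independent of $(i,j)$, with any $(i,j)$-dependent correction of strictly smaller order, which is what makes the centering take the clean averaged form in \eqref{eq:centering-inhom}.

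Asymptotic normality of $L_n$ then follows from the Lindeberg--Lyapunov CLT. The two-sided comparison $c_*\bar\sigma_n\le\sigma_n(i,j)\le c^*\bar\sigma_n$ of Assumption~\ref{assumption-inhom}\eqref{assumption2-ii}, combined with the Bernstein-type moment bound in Assumption~\ref{assumption-inhom}\eqref{assumption2-v}, yields a uniform Lyapunov condition. Concentration of the quadratic factor $\sum_{i<j}(\aij-\mu_n)^2$ around $\sum_{i<j}\sigma_n^2(i,j)$ comes from applying the same Bernstein tail to the independent centered variables $(\aij-\mu_n)^2 - \sigma_n^2(i,j)$, using $\bar\sigma_n = \Theta(1)$ in the dense regime. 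The negligibility of the post-quadratic Taylor remainder and of the residual in the approximation $\opera{A_n} \approx G_n(A_n)$ is controlled via a bound on the second-largest singular value of $A_n$: the homogeneous argument invokes~\cite{LS18}, and in the inhomogeneous setting I would substitute the dense-regime spectral-gap bound of~\cite{AEK19}. This substitution is precisely why the density hypothesis $\liminf_n\bar\sigma_n>0$ is imposed in the theorem, and why the remaining conditions in Assumption~\ref{assumption-inhom} are automatically satisfied in this regime.

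The principal obstacle I anticipate is the interaction between the non-constant variance profile and step~(iii): one must show that the profile-dependent quadratic expression produced by the Taylor expansion concentrates on the scalar quantity appearing in $\alpha_n(p,r)$ rather than on some differently weighted average that would mismatch the centering. Tracking the $(i,j)$-dependent corrections to $c_{ij}^{(n)}$ and to the Hessian coefficients with sufficient precision to certify this --- while simultaneously verifying that the Lyapunov ratio is $o(1)$ in the presence of a general variance profile --- is where the inhomogeneity most directly complicates the argument relative to the i.i.d.\ case of Theorem~\ref{thm:asymptotic-normality}.
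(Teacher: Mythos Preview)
Your proposal is correct and mirrors the paper's own proof: replace the spectral input of~\cite{LS18} by the dense-regime bound of~\cite{AEK19} (this is Lemma~\ref{claim:lambda2-bound-inhom}), deduce the analogue of Lemma~\ref{lem:errg-deg} so that $\eta_n(A_n)$ approximates $\opera{A_n}$ on the CLT scale, and then rerun the Taylor expansion of Section~\ref{sec:asym-normality} with $\sum_{i<j}\sigma_n^2(i,j)$ in place of $\binom{n}{2}\sigma_n^2$ and the Lindeberg--Lyapunov CLT in place of~\eqref{eq:clt}. The ``obstacle'' you flag is in fact a non-issue: since the mean matrix $\mu_n(J_n-I_n)$ is permutation-invariant, the gradient and Hessian of $\eta_n$ evaluated there (Lemma~\ref{lem:der-taylor}) are exactly $(i,j)$-independent at every order, so the quadratic term is automatically proportional to the unweighted sum $\sum_{i<j}(\aij-\mu_n)^2$ and concentrates on $\sum_{i<j}\sigma_n^2(i,j)$ with no profile-dependent reweighting to track.
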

Theorem~\ref{thm:asymptotic-normality-inhom} is proved in Section~\ref{ssec:asymp-normality}.
\\

Similar to Remark~\ref{rem:diag}, the zero diagonal entry is not a strict requirement in
Theorem~\ref{thm:asymptotic-normality-inhom}. 
The expression of $\alpha_n(p,r)$ in~\eqref{eq:centering-inhom} can be suitably updated to accommodate 
nonnegative random diagonal entries.

\subsection{Special cases}\label{ssec:special}
\paragraph*{Adjacency matrices of \texorpdfstring{Erd\H{o}s-R\'enyi}{Erdos-Renyi} random graphs.}
Let $\mathrm{ER}_n(\mu_n)$ denote an Erd\H{o}s-R\'enyi random graph with $n$ vertices and connection probability $\mu_n$.
As an immediate corollary to Theorems~\ref{thm:asymptotic-normality} and \ref{thm:maximizer-vector}, we obtain the asymptotic normality for adjacency matrices of certain sequences of $\mathrm{ER}_n(\mu_n)$ graphs.

\begin{corollary}\label{cor:errg-asymp-norm}
Fix any $1< p\leq r< \infty$ and 
let $A_n$ denote the adjacency matrix of $\mathrm{ER}_n(\mu_n)$.  
For \blue{$\mu_n = \omega( n^{-\frac{1}{3}}\log^{2/3}n )$,} the vector bounds in \eqref{eq:largest-e-vector-pnr} and \eqref{eq:largest-e-vector}, and the asymptotic normality result in \eqref{eq:asymp-normal} hold with $\sigma_n^2 = \mu_n(1-\mu_n)$.
\end{corollary}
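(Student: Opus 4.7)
The plan is to identify the adjacency matrix of $\mathrm{ER}_n(\mu_n)$ as an instance of the random matrix framework of Assumption~\ref{assumption-1} and then invoke Theorems~\ref{thm:maximizer-vector} and~\ref{thm:asymptotic-normality}. Under this model the off-diagonal entries $(a_{ij}^n)_{i<j}$ are i.i.d.\ $\mathrm{Bernoulli}(\mu_n)$ with diagonal entries equal to zero, so the common mean is $\mu_n$ and the common variance is $\sigma_n^2 = \mu_n(1-\mu_n)$.

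For the vector bound under the weaker hypothesis $\mu_n = \omega((\log n)^2/n)$, I would verify Assumption~\ref{assumption-1} term by term. Parts~\eqref{assumption1-i} and~\eqref{assumption1-iii} are immediate from the Bernoulli structure, since $\PR(a_{12}^n>0) = \mu_n$ dominates $(2+\varepsilon)\log n/n$ for every fixed $\varepsilon>0$ eventually. For part~\eqref{assumption1-ii}, Bernoulli gives $\mu_n\leq 1$ and $\sigma_n^2/\mu_n = 1-\mu_n \leq 1$, while the lower bound on $\mu_n$ is assumed. For the Bernstein-type condition~\eqref{assumption1-iv}, I would use the almost-sure bound $|a_{12}^n - \mu_n|\leq 1$ to conclude that, for all $k\geq 3$,
\[
\E\bigl[|a_{12}^n - \mu_n|^k\bigr] \leq \E\bigl[(a_{12}^n - \mu_n)^2\bigr] = \sigma_n^2 \leq \tfrac{k!}{2}\sigma_n^2,
\]
so that~\eqref{assumption1-iv} holds with $c = 1$. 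Since $\limsup_n \sigma_n^2/\mu_n = \limsup_n (1-\mu_n) \leq 1$, the constant $K$ in Theorem~\ref{thm:maximizer-vector} equals $1$, and~\eqref{eq:largest-e-vector-pnr}--\eqref{eq:largest-e-vector} follow verbatim from that theorem.

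For the CLT under the stronger hypothesis $\mu_n = \omega(\sqrt{\log n/n})$, I would additionally check the two bounds in~\eqref{eq:sigma-lower-bound} required by Theorem~\ref{thm:asymptotic-normality}. If $\mu_n$ is bounded away from $0$ both are trivial, so assume $\mu_n\to 0$; then $\sigma_n^2 \sim \mu_n$, and the hypothesis $\mu_n \gg \sqrt{\log n/n}$ yields $\mu_n\sigma_n^2 \sim \mu_n^2 = \omega(\log n/n)$, while $\sigma_n \sim \sqrt{\mu_n} = \omega\bigl(n^{-1/4}(\log n)^{1/4}\bigr)$ dominates $n^{-1/2+c_0}$ for any $c_0\in(0,1/4)$. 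Applying Theorem~\ref{thm:asymptotic-normality} with $\sigma_n^2 = \mu_n(1-\mu_n)$ then delivers~\eqref{eq:asymp-normal}.

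There is no substantive obstacle in this corollary; the proof is pure verification, and the Bernoulli distribution makes every moment and tail input essentially immediate. The only mild subtlety worth flagging is that the two halves of the statement operate at different sparsity thresholds, namely $(\log n)^2/n$ for the vector bound and $\sqrt{\log n/n}$ for the CLT, which reflects the fact (mentioned in Remark~\ref{rem:sigma-cond}) that the asymptotic normality proof feeds through spectral-gap inputs requiring a slightly denser regime than the deterministic perturbation argument underlying the $\ell_\infty$ bound.
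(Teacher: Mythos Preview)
Your proposal is correct and matches the paper's approach: the paper states this corollary as an immediate consequence of Theorems~\ref{thm:asymptotic-normality} and~\ref{thm:maximizer-vector} without further proof, and your verification of Assumption~\ref{assumption-1} and condition~\eqref{eq:sigma-lower-bound} for Bernoulli entries is exactly the routine check this entails. Your computation of $K=1$ and the treatment of the two sparsity thresholds are both on target.
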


\paragraph*{Grothendieck's \texorpdfstring{$\ell_r$}{lr}-problem.}
We now investigate the behavior of the $\ell_r$ quadratic maximization problem, also known as the $\ell_r$ Grothendieck problem.
For any $n\times n$ matrix $A_n$, the $\ell_r$ Grothendieck problem concerns the solution to the following quadratic maximization problem. 
For $r\geq 2$, define
\begin{equation}
    \label{eq:mr-def}
    M_r(A_n):= \sup_{\|\xx\|_{r}\leq 1} \xx^T A_n\xx.
\end{equation}
In general, finding $M_r(A_n)$ is NP-hard~\cite{KN12}.
However, in the case of a matrix $A$ with nonnegative entries, for which $A^TA$ is irreducible, Proposition~\ref{prop:groth} below states that the $\ell_r$ Grothendieck problem is a special case of the $r\to p$ norm problem. 
\begin{proposition}
\label{prop:groth}
\blue{Let $A$ be a symmetric matrix with nonnegative entries such that $A^TA$ is  irreducible.} 
Then for any $r\geq 2$, 
$
    M_r(A) = \|A\|_{r\to r^*},
$
where $r^*= r/(r-1)$ is the H\"{o}lder conjugate of $r$.
\end{proposition}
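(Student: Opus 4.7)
The plan is to establish the identity by proving the two inequalities separately. The nontrivial direction hinges on the uniqueness of a positive nonlinear Perron--Frobenius eigenvector associated with the $r \to r^*$ problem.

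The inequality $M_r(A) \leq \|A\|_{r \to r^*}$ is a one-line application of H\"older's inequality for the conjugate pair $(r, r^*)$. For any $x$ with $\|x\|_r \leq 1$,
$$x^T A x = \sum_i x_i (Ax)_i \leq \|x\|_r \|Ax\|_{r^*} \leq \|A\|_{r \to r^*},$$
and taking the supremum over the unit $\ell_r$ ball yields the bound.

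For the reverse direction, the plan is to exhibit a single vector attaining both suprema. Since the paper works with entrywise nonnegative $A$ and $x^T A x = x^T((A+A^T)/2) x$, we may replace $A$ by its symmetric part and treat $A$ as symmetric. By compactness, the Grothendieck supremum is attained at some $v$, which may be taken entrywise nonnegative (replacing $v$ by $|v|$ does not decrease $v^T A v$ since $A \geq 0$) with $\|v\|_r = 1$; the irreducibility hypothesis then forces $v$ to be strictly positive. The Lagrange multiplier condition reads
$$A v = \lambda v^{r-1}, \qquad \lambda = v^T A v = M_r(A).$$
A direct computation using $(r-1) r^* = r$ and $\|v\|_r = 1$ gives
$$\|Av\|_{r^*}^{r^*} = \sum_i (\lambda v_i^{r-1})^{r^*} = \lambda^{r^*} \sum_i v_i^r = \lambda^{r^*},$$
so $\|Av\|_{r^*} = \lambda = M_r(A)$.

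It remains to verify that this $v$ is actually the maximizer of $\|Ax\|_{r^*}$ over $\|x\|_r \leq 1$, for then $\|A\|_{r \to r^*} = \|Av\|_{r^*} = M_r(A)$. Substituting $Av = \lambda v^{r-1}$ shows that $v$ automatically satisfies the Euler--Lagrange equation
$$A (Av)^{r^*-1} = \lambda^{r^*-1} Av = \lambda^{r^*} v^{r-1}$$
for the $r \to r^*$ problem. Under the irreducibility of $A^T A$, the nonlinear Perron--Frobenius theorem of Boyd~\cite{Boyd74}, which forms the basis of the nonlinear power method developed in Section~\ref{sec:prelim}, asserts that this critical-point equation has a unique positive solution up to positive scaling, and that this solution is the $r \to r^*$ maximizer. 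Hence $v$ coincides with it, completing the argument. I anticipate the main obstacle to lie in this final uniqueness step: one must carefully invoke the irreducibility of $A^T A$ to identify the Grothendieck critical vector with the $r \to r^*$ maximizer, which is exactly where the nonlinear Perron--Frobenius structure becomes indispensable (by contrast, the H\"older step uses only convexity).
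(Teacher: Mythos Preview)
Your proposal is correct and follows essentially the same route as the paper's proof: both derive the Lagrange condition $Av = \lambda\,\Psi_r(v)$ for the Grothendieck maximizer, verify that this forces $v$ to satisfy the fixed-point equation $Sv \propto v$ for the $r\to r^*$ problem, and then invoke the nonlinear Perron--Frobenius uniqueness (Lemma~\ref{lem:vcomp}) to identify $v$ with the $r\to r^*$ maximizer. Your additional H\"older step for the easy inequality is a nice touch the paper omits; one minor caution is that your symmetrization remark (``replace $A$ by $(A+A^T)/2$'') is only harmless because the paper's ambient setting already has $A$ symmetric --- for genuinely non-symmetric $A$ this replacement would alter $\|A\|_{r\to r^*}$ while leaving $M_r(A)$ unchanged, so the identity could fail.
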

Proposition~\ref{prop:groth}
is proved at the end of Section~\ref{sec:Groth}.
Together with Theorem~\ref{thm:asymptotic-normality}, this immediately yields the limit theorem for 
$\bA_n:= n^{-(1-\frac{2}{r})} A_n$ 
stated in the corollary below.

\begin{corollary}
\label{cor:groth-clt}
Let $(A_n)_{n\in\N}$ be a sequence of random matrices satisfying the assumptions of {Theorem~\ref{thm:asymptotic-normality}}. 
Then for any fixed $r\in [2,\infty)$, as $n\to\infty$, the asymptotic normality result in \eqref{eq:asymp-normal} holds for $M_r(\bA_n)$ with $p = r^* = r/(r-1)$. 
\end{corollary}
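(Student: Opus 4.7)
The plan is to reduce the Grothendieck problem to the $r\to p$ norm problem and then invoke Theorem~\ref{thm:asymptotic-normality} directly. For any fixed $r \in [2,\infty)$, the Hölder conjugate $r^* = r/(r-1)$ lies in $(1, 2]$, so the pair $(p,r) = (r^*, r)$ satisfies the constraint $1 < p \leq r < \infty$ required by Theorem~\ref{thm:asymptotic-normality}. Moreover, the scaling matches: $n^{-(1/p - 1/r)} = n^{-(1 - 2/r)}$, which is precisely the normalization of $\bar{A}_n$ used in the corollary statement.

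The next step is to justify the identification $M_r(\bar{A}_n) = \|\bar{A}_n\|_{r \to r^*}$ via Proposition~\ref{prop:groth}. This proposition requires $\bar{A}_n^T \bar{A}_n = \bar{A}_n^2$ to be irreducible with nonnegative entries, and nonnegativity is immediate from Assumption~\ref{assumption-1}. For irreducibility, I would argue that under Assumption~\ref{assumption-1}\eqref{assumption1-iii}, the support graph of $A_n$ stochastically dominates an Erdős--Rényi graph with edge probability at least $(2+\varepsilon)\log n/n$, which is connected and non-bipartite (in fact contains many triangles) eventually almost surely; for such a graph the squared adjacency matrix is irreducible, so the same holds for $\bar{A}_n^2$ on a set of probability tending to one.

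Finally, applying Theorem~\ref{thm:asymptotic-normality} with $p = r^*$ yields
\begin{equation*}
\frac{1}{\sigma_n}\bigl(\|\bar{A}_n\|_{r \to r^*} - \alpha_n(r^*, r)\bigr) \dto \mathrm{Normal}(0,2),
\end{equation*}
and on the event that $\bar{A}_n^2$ is irreducible one has $M_r(\bar{A}_n) = \|\bar{A}_n\|_{r \to r^*}$, so this limit transfers to $M_r(\bar{A}_n)$ by a standard argument (intersect with the probability-one event of irreducibility). There is no genuine obstacle here beyond chaining the two previously established results; the only point requiring verification is the irreducibility of $\bar{A}_n^2$, which is a routine consequence of the connectivity/non-bipartiteness estimates guaranteed by Assumption~\ref{assumption-1}\eqref{assumption1-iii}.
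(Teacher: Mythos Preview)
Your proposal is correct and follows essentially the same route as the paper: the paper states that the corollary is immediate from Proposition~\ref{prop:groth} together with Theorem~\ref{thm:asymptotic-normality}, and the irreducibility of $A_n^TA_n$ that you sketch is exactly the content of Lemma~\ref{lem:irreducible-random} (via Lemma~\ref{lem:irreducible}) established earlier in the paper. Your write-up simply makes explicit the chaining of these results.
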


\section{Proof outline}\label{sec:outline}
The proof of Theorem~\ref{thm:asymptotic-normality} consists of three major steps:

\paragraph*{Step 1: Approximating the maximizing  vector.}
The first step is to find a good approximation for a maximizing vector $\vv_n$ for $\opera{A_n}$, as 
defined in \eqref{def-maxvecn}.  
 As stated in Theorem~\ref{thm:maximizer-vector},  we can precisely characterize the  $\ell_\infty$  distance between $\vv_n$ and $n^{-1/r} \1$,  the scaled vector of all ones in $\R^n$.
In fact we work with a general \emph{deterministic} sequence of symmetric nonnegative matrices (see Proposition~\ref{prop:vec-close}). 
When $p<r$, the required $\ell_\infty$-bound follows whenever the row sums are approximately the same, which we call \emph{almost regularity} (see Definition~\ref{defn:almost-regular}). 
We actually have a short and elementary proof when $p<r$. 
\blue{The proof for the case $p=r$ is more complicated and 
requires that the entries of $A_n^TA_n$ be of order $n\mu_n^2$. 
We call the latter property, which is stated more precisely in Definition~\ref{defn:two-hop},   
\emph{well-connectedness}.}

\paragraph*{Step 2: Approximating the $r\to p$ norm.}
The next step is to construct a suitable approximation of $\|A_n\|_{r\to p}$. 
With the strong bound in Theorem~\ref{thm:maximizer-vector}, a natural choice would be to approximate $\|A_n\|_{r\to p}$ by $\| A_n n^{-1/r} \1\|_{p}$.
However, such an approximation turns out to be insufficient on the CLT-scale. 
To this end, we use a nonlinear power iteration for finding $r\to p$ norms, introduced by Boyd~\cite{Boyd74}. 
We start the power iteration from the vector $\vv^{(0)}_n:=n^{-1/r}\1$. 
We show that the rate of convergence of this power-method depends on the proximity of $\vv^{(0)}_n$ to $\vv_n$ (which we now have from Theorem~\ref{thm:maximizer-vector}), and the second largest eigenvalue of $A_n$ (for which we use existing results from \cite{LS18, EKYY13, AEK19}). 
Our ansatz is that after only one step of Boyd's nonlinear power iteration, we arrive at a suitable approximation of $\opern$. 
For any $k\geq 1$, $t\in \R$, and $\xx = (x_1,\ldots, x_n)$, define $
\psi_k(t):= |t|^{k-1}\sgn(t)$, and $\Psi_k(\xx)= (\psi_k(x_i))_{i=1}^n.$ 
Then we show that (see Proposition~\ref{prop:norm-approximation})  the quantity
    \begin{eq}
       \|A_n\|_{r\to p}\approx \eta(A_n) := \frac{\|A_n\Psi_{r^*}(A_n^T\Psi_p(A_n\1))\|_{p}}{\|\Psi_{r^*}(A_n^T\Psi_p(A_n\1))\|_r}, 
    \end{eq}
    where $r^* := r/(r-1)$ denotes the H\"{o}lder conjugate of $r$,
provides the required approximation to $\opern$.   
As in Step~1, we also first show this approximation for a deterministic sequence of matrices 
satisfying certain conditions, 
and then show that the random matrices we consider almost surely satisfy these conditions.

\paragraph*{Step 3: Establishing asymptotic normality.}
The final step is to prove the asymptotic normality of the sequence 
$\{\eta(A_n)\}_{n \in \N}$.  
This is a nonlinear function, and as it turns out, the state-of-the-art approaches to prove CLT do not apply directly in our case.
For that reason, we resort to an elementary approach using Taylor expansion to obtain the limit law.
Loosely speaking, we show that 
$$\eta(A_n) \approx n^{\frac{1}{p} - \frac{1}{r} -1}\sum_{i,j}\aij  + \frac{1}{2}\Big(p-1 + \frac{1}{r-1}\Big)n^{\frac{1}{p} - \frac{1}{r}}\sum_{i,j}(\aij-\mu)^2,$$
which after appropriate centering and scaling yields the CLT result as stated in Theorem~\ref{thm:asymptotic-normality}.

\section{Preliminaries}\label{sec:prelim}

\subsection{Boyd's nonlinear power method}
We start by introducing the nonlinear power iteration method and stating some preliminary known results, along with a rate of convergence result that will be crucial for our treatment.
The framework for nonlinear power iteration was first proposed by Boyd~\cite{Boyd74}.
It has also been used in~\cite{BV11} to obtain approximation algorithms for the $r\to p$ norm of matrices with strictly positive entries.

 Henceforth, we fix $n \in \N$, and for notational simplicity, omit the subscript $n$, for example, using $A$ to denote $A_n$, etc. 
Let
$A$ be an $n\times n$ matrix with nonnegative entries.
For any $\xx\ne \bzero$, define the function 
$f(\xx):= \|A\xx\|_{p}/\|\xx\|_r,$ and set 
 $\newsigma := \sup_{\xx\ne 0}f(\xx)$.
If a vector $\vv$ is a local maximum (or, more generally,  critical point) of the function $f$, then since $f$ is smooth, the gradient of $f$ must vanish at that point.   
This critical point can further be written as the solution to a fixed point equation.
Now, if there is a unique positive  critical point,  the fixed point equation may potentially be used to construct an iteration that converges to the maximum, starting from a suitable positive vector. 
In fact, under suitable assumptions, this convergence can be proved to be geometrically fast.
The above description is briefly formalized below.
For $q > 1$, $t \in \R$ and $\xx \in \R^n$, define 
\begin{eq}
\label{eq:psi-def}
\psi_q(t):= |t|^{q-1}\sgn(t),\qquad \Psi_q(\xx) := (\psi_q(x_i))_{i=1}^n, 
\end{eq}
where $\sgn(t) = -1, 1,$ and $0$, for 
$t <0$, $t >0$, and $t = 0$, respectively.
Taking the partial derivative of $f$ with respect to $x_i$, we obtain, for $\xx \neq \bzero$, 
\begin{align}\label{eq:pder-f}
    \frac{\partial f(\xx)}{\partial x_i} = \|\xx\|_r^{-2}\Big[\|A\xx\|_p^{-(p-1)}\inner{\Psi_{p}(A\xx), A_i^T}\|\xx\|_r - \|\xx\|_r^{-(r-1)} \psi_r(x_i)\|A\xx\|_p\Big],
\end{align}
where $A_i$ denotes the $i$-th column of $A$.
Equating~\eqref{eq:pder-f} to zero for $i = 1, \ldots, n$,  yields
\begin{eq}\label{eq:equating-0}
 \|\xx\|_r^rA^T\Psi_{p}(A\xx) = \|A\xx\|_p^p\Psi_{r}(\xx) .
\end{eq}
Now, let $\uu$ with $\|\uu\|_r =1$ be a (normalized) solution to  \eqref{eq:equating-0} and set $\newsigma(\uu) := \|A\uu\|_{p}$.   
Then straightforward algebraic manipulations show that 
\begin{eq}\label{eq:fixed-point-S}
\Psi_{r^*}(A^T\Psi_p(A\uu)) = \big(\newsigma(\uu)\big)^{p(r^*-1)}\uu ,
\end{eq}
where recall that $r^* = r/(r-1)$.
We denote the operator arising on the left-hand side ~of~\eqref{eq:fixed-point-S} as follows:   
\begin{eq}\label{eq:boyditeration}
S\xx := \Psi_{r^*}(A^T\Psi_p(A\xx)), \quad 
W\xx := \frac{S\xx}{\|S\xx\|_r} \qquad \text{for }  \xx\ne \bzero.
\end{eq}
Then \eqref{eq:fixed-point-S} implies  
\begin{eq}\label{identities-SW}
    S\uu = \big(\newsigma(\uu)\big)^{p(r^*-1)} \uu, \quad W\uu = \uu,
\end{eq}
where the last equality uses the fact that $\|\uu\|_r = 1.$ 
Thus, any solution to \eqref{eq:fixed-point-S} is a fixed point of the operator $W$.
The following lemma proves uniqueness of this fixed point among all nonnegative vectors,  which can be viewed as a generalization of the classical Perron-Frobenius theorem.
The uniqueness in Lemma~\ref{lem:vcomp} was established for matrices with strictly positive entries in~\cite[Lemma 3.4]{BV11}. 
Below we show that their proof can be adapted to matrices with nonnegative entries when $A^TA$ is irreducible. 
\begin{lemma}
    \label{lem:vcomp}
    Assume that $A^TA$ is irreducible.
    Then \eqref{eq:fixed-point-S} has a unique solution $\vv$ among the set of all nonnegative vectors. 
    Further, $\vv$ has all positive entries.
\end{lemma}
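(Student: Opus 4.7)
I will follow the three-step scheme (existence--positivity--uniqueness) used in \cite[Lemma~3.4]{BV11} for matrices with strictly positive entries, and adapt it to the weaker hypothesis that only $A^TA$ is irreducible. For \emph{existence}, the map $f(\xx):=\|A\xx\|_p/\|\xx\|_r$ is continuous and positively $0$-homogeneous, so it attains its supremum $\newsigma$ on the compact set $\{\xx\geq\bzero:\|\xx\|_r=1\}$. Irreducibility of $A^TA$ rules out any zero column of $A$ (else the corresponding row of $A^TA$ would vanish), hence $\newsigma>0$ and any maximizer $\vv$ is nonzero. Equating the partial derivatives in \eqref{eq:pder-f} to zero exactly as in \eqref{eq:equating-0}--\eqref{eq:fixed-point-S} shows that $\vv$ satisfies \eqref{eq:fixed-point-S}.

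For \emph{positivity of $\vv$}, I would use the fixed-point equation together with the nonnegativity of entries. If $v_i=0$ for some $i$, then $0=(A^T\Psi_p(A\vv))_i=\sum_j a_{ji}(A\vv)_j^{p-1}$ forces $(A\vv)_j=0$ for every $j$ with $a_{ji}>0$, which in turn forces $v_k=0$ for every $k$ with $a_{jk}>0$. Consequently, the zero set $I:=\{k:v_k=0\}$ is closed under the adjacency relation $i\sim k$ iff $(A^TA)_{ik}=\sum_j a_{ji}a_{jk}>0$. Irreducibility of $A^TA$ then forces $I=[n]$, contradicting $\|\vv\|_r=1$.

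For \emph{uniqueness}, I plan to run the Hilbert projective metric argument of \cite{BV11}. On the open positive cone, the metric $d(\xx,\yy):=\log(\max_i(x_i/y_i)/\min_i(x_i/y_i))$ has two useful properties: $\Psi_q$ rescales it by $|q-1|$, and every nonnegative matrix is a (weak) Birkhoff contraction. Composing, the operator $S$ from \eqref{eq:boyditeration} satisfies $d(S\xx,S\yy)\leq \tfrac{p-1}{r-1}\,d(\xx,\yy)$. For $p<r$ this is a strict contraction, so the normalized iteration $W$ has at most one fixed point on the positive part of the $\ell_r$-unit sphere, completing the uniqueness claim. For the borderline case $p=r$, irreducibility of $A^TA$ implies that its diagonal entries $(A^TA)_{ii}=\|A_{\cdot i}\|_2^2$ are all strictly positive (else column $i$ of $A$ would be zero), so $A^TA$ is primitive; this primitivity can then be leveraged to upgrade the weak contraction to a strict one on some iterate $S^k$, again yielding uniqueness by the contraction principle.

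The main obstacle I anticipate is this last piece, the $p=r$ case. The nonlinear contraction ratio $(p-1)/(r-1)$ saturates at $1$, so any strict contractivity must come entirely from Birkhoff's theorem applied to the linear factors $A$ and $A^T$. Since $A$ itself need not map the positive cone into its interior (it can have many zero entries), one cannot conclude with a single application of $S$; instead, one must iterate $S$ and carefully track how the intervening homogeneous, order-preserving but nonlinear maps $\Psi_p$ and $\Psi_{r^*}$ interact with the primitivity of $A^TA$ in order to extract the necessary strict contraction estimate.
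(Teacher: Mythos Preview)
Your existence and positivity arguments coincide with the paper's (the positivity step is exactly the observation that the zero set of a nonnegative solution is closed under the adjacency of $A^TA$, so irreducibility forces it to be empty). For uniqueness your Hilbert-metric route is different from the paper's and works cleanly when $p<r$, but the $p=r$ case has a real gap that you yourself flag. Primitivity of $A^TA$ makes the \emph{linear} map $(A^TA)^m$ a strict Birkhoff contraction for large $m$, but $S^k$ is not a linear map: the factors $A,A^T$ are interleaved with the coordinatewise nonlinear maps $\Psi_p,\Psi_{r^*}$, which are Hilbert-metric isometries here but do not commute with $A$ or $A^T$, so one cannot collapse $S^k$ to anything like $(A^TA)^k$. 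Upgrading non-expansiveness of $S$ to strict contraction of some iterate $S^k$ would require a separate nonlinear Perron--Frobenius argument (for instance, showing that $S$ maps the open cone into a set of finite projective diameter, or pushing primitivity through the interspersed diagonal nonlinearities), and you have not supplied one.

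The paper sidesteps this with a short ratio argument that handles all $1<p\leq r<\infty$ uniformly. Given two distinct positive fixed points $\uu,\vv$ with $\|\uu\|_r=\|\vv\|_r=1$ and, say, $\gamma(\uu)\leq\gamma(\vv)$, set $\theta=\min_i u_i/v_i\in(0,1]$ and $U=\{k:u_k=\theta v_k\}$, so that $U^c\neq\varnothing$. Irreducibility of $A^TA$ then produces some $k\in U$ with $(S\uu)_k>(S(\theta\vv))_k=\theta^{(p-1)/(r-1)}(S\vv)_k$, because the strict inequality $u_j>\theta v_j$ at some $j\in U^c$ propagates through one application of $A$, $\Psi_p$, $A^T$ along a two-step path in $A^TA$. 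Raising to the $(r-1)$-th power and dividing by $u_k^{\,r-1}=(\theta v_k)^{r-1}$ gives $\gamma(\uu)^p>\theta^{\,p-r}\gamma(\vv)^p\geq\gamma(\vv)^p$, the last step using only $p\leq r$ and $\theta\leq 1$. This contradicts $\gamma(\uu)\leq\gamma(\vv)$, and no separate treatment of $p=r$ is needed.
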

\begin{proof}
\blue{First note that the maximizer of $\|A\xx\|_{p}/\|\xx\|_r$ over $\xx\ne \bzero$ (which always exists) satisfies~\eqref{eq:fixed-point-S}.
Also, all entries of such a maximizer are nonnegative.
To see this, if $\xx$ has a negative entry, then the value of $\|A\xx\|_{p}$ can be strictly increased by replacing the negative entry by its absolute value, without changing $\|\xx\|_r$.}

Next, we show that, when $A^TA$ is irreducible, any non-zero, nonnegative vector satisfying~\eqref{eq:fixed-point-S} must have strictly positive entries. 
This, in particular, will also prove that $\vv$ has all positive entries. 
We argue by contradiction.  
Let $\xx$ be a non-zero, nonnegative vector satisfying \eqref{eq:fixed-point-S} and 
suppose, $i \in [n]$ be such that  $x_i=0$. 
Then, by \eqref{eq:fixed-point-S} and \eqref{identities-SW} we have 
\begin{eq}
    (S\xx)_i = 0
    &\implies (A^T (\Psi_p (A\xx))_i = \sum_{j=1}^n a_{ji} \left|\sum_{k=1}^n a_{jk} x_k\right|^{p-1} = 0, 
\end{eq}
In fact, we have 
\begin{eq}\label{eq:Sx0}
    (S\xx)_i = 0
    \implies (A^TA\xx)_i=\sum_{j=1}^n a_{ji}\Big(\sum_{k=1}^n a_{jk}x_k\Big) =0,
\end{eq}
since all the elements of $A$ and $\xx$ are nonnegative, if $A^T\Psi_p(A\xx)=0$, then $\Psi_{2}(A^T\Psi_2(A\xx))=0$ as well. 
Observe that~\eqref{eq:Sx0} implies $x_j=0$ for all $j\in [n]$ for which there exists $j'\in [n]$ with $a_{j'i}>0$ and  $a_{j'j}>0$.
Repeating the above with $i$ replaced by any such $j$,
we conclude that $x_j = 0$. 
Continuing in this way 
and using the irreducibility of $A^TA$, it follows that $x_j=0$ for all $j=1,\ldots,n$, which this leads to a contradiction. 
Thus, $\xx$ must have strictly positive entries.

To show uniqueness, let $\uu\neq \vv$ be two nonnegative non-zero vectors satisfying~\eqref{eq:fixed-point-S} with  $\|\uu\|_r=\|\vv\|_r=1$. 
Further, without loss of generality, assume that $\|A\uu\|_p\leq \|A\vv\|_p$.
By the above argument, both $\uu$ and $\vv$ have all positive entries. 
Then there must exist $\theta\in (0,1]$ such that $\uu - \theta \vv$ has a zero coordinate. 
Let $\theta$ be the smallest such number.
Define $U := \{k: u_k- \theta v_k =0\}$, and note that $u_j- \theta v_j>0$ for all $j\in U^c$.
Since $\|\uu\|_r = \|\vv\|_r$ and $\uu\neq \vv$, it follows that $U^c\neq \varnothing$. 
\begin{claim}
There exists $k\in U$ such that 
\begin{eq}\label{strict-ineq-unique}
     (S\uu)_k > (S\theta \vv)_k = \theta^{\frac{p-1}{r-1}} (S \vv)_k.
\end{eq}
\end{claim}
\begin{claimproof}
First, note that since $A^TA$ is irreducible,
there exists $k_1\in U$, $k_2\in [n]$, and $k_3\in U^c$, such that both
$a_{k_1k_2}$ and $a_{k_2k_3}$ are positive.
Therefore, the inequalities $u_{k_3}> \theta v_{k_3}$, $a_{k_2k_3}>0$, $u_i\geq \theta v_i$ for all $i\in [n]$ (the latter holds by the minimality of $\theta$), and the nonnegativity of $A$, $u$, and $v$ yield
\begin{eq}\label{eq:local-4.8}
    \big(\Psi_p(A\uu)\big)_{k_2}>\big(\Psi_p(A(\theta\vv))\big)_{k_2}\quad \text{and}\quad
    \big(\Psi_p(A\uu)\big)_{i}\geq\big(\Psi_p(A(\theta\vv))\big)_{i}\  \text{for all}\  i\in [n].
\end{eq}
This, together with the fact that $a_{k_1k_2}>0$, implies 
$(A^T\Psi_p(A\uu))_{k_1}>(A^T\Psi_p(A(\theta\vv)))_{k_1},$
 and by \eqref{eq:boyditeration}, \eqref{strict-ineq-unique} holds with $k=k_1$. 
\end{claimproof}
Now fix some $k\in U$ satisfying \eqref{strict-ineq-unique}. 
Then, using \eqref{eq:fixed-point-S}, one observes that
\begin{eq}
     \gamma(\uu)^{p} =  \frac{(S\uu)_k^{r-1}}{u_k^{r-1}}>\frac{\theta^{p-1}(S\vv)_k^{r-1}}{(\theta v_k)^{r-1}} = \theta^{p-r} \gamma(\vv)^{p}. 
\end{eq}
Since $p\leq r$ and $\theta\in (0, 1]$, this yields $\|A\uu\|_p = \gamma(\uu)>\gamma(\vv)=\|A\vv\|_p$, which contradicts the initial assumption that $\|A\uu\|_p\leq \|A\vv\|_p$. 
This proves the uniqueness. 
\end{proof}

The (nonlinear) power iteration for finding $\gamma$ consists of the following iterative method: 
Let $\vv^{(0)}$ be a vector with positive entries and $\|\vv^{(0)}\|_r=1$.
Then for $k\geq 0$, define 
\begin{eq}\label{eq:iteration-algo}
\vv^{(k+1)} := W \vv^{(k)}. 
\end{eq}
In general, the above iteration may not converge to the global maximum $\gamma$.
However, as the following result states, if in addition to having nonnegative entries, the matrix $A^TA$ is irreducible, then the  iteration must converge to the unique positive fixed point. 
\begin{proposition}[{\cite[Theorem 2]{Boyd74}}] \label{prop:iteration-converge}
Fix any $1< p \leq r <\infty$. 
Let $A$ be a matrix with nonnegative entries such that  $A^TA$ is irreducible.
If $\vv^{(0)}$ has all positive entries,
then $\lim_{k\to\infty}\|A\vv^{(k)}\|_p = \newsigma$. 
\end{proposition}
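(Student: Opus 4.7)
My plan is to show that the scalar sequence $\gamma_k := \|A\vv^{(k)}\|_p$ is monotone non-decreasing and bounded above by $\gamma$, hence converges to some $\gamma_\infty \in (0,\gamma]$; then to extract a convergent subsequence $\vv^{(k_j)}\to\vv^*$, verify that $\vv^*$ is a nonnegative solution of the fixed-point equation~\eqref{eq:fixed-point-S}, and invoke the uniqueness statement in Lemma~\ref{lem:vcomp} to conclude $\vv^*=\vv$ and $\gamma_\infty=\gamma$.

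\emph{Positivity and monotonicity.} First I would verify inductively that $\vv^{(k)}>\bzero$: if some entry of $S\vv^{(k)}$ vanished while $\vv^{(k)}>\bzero$, then the same argument used in the proof of Lemma~\ref{lem:vcomp} would force the corresponding row of $A^T A$ to be zero, contradicting irreducibility. I would then prove $\gamma_{k+1}\geq\gamma_k$ via two applications of H\"older's inequality: on the one hand,
\[
\gamma_k^p \;=\; \binner{A^T\Psi_p(A\vv^{(k)}),\ \vv^{(k)}} \;\leq\; \|A^T\Psi_p(A\vv^{(k)})\|_{r^*},
\]
and on the other, since $W\vv^{(k)}=\vv^{(k+1)}$ is by construction the unit $r$-norm H\"older-dual of $A^T\Psi_p(A\vv^{(k)})$,
\[
\|A^T\Psi_p(A\vv^{(k)})\|_{r^*} \;=\; \binner{\Psi_p(A\vv^{(k)}),\ A\vv^{(k+1)}} \;\leq\; \gamma_k^{p-1}\gamma_{k+1}.
\]
Dividing yields $\gamma_{k+1}\geq\gamma_k$, so $\gamma_k\uparrow\gamma_\infty\in(0,\gamma]$.

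\emph{Limit identification.} By compactness of $\{\xx\geq\bzero:\|\xx\|_r=1\}$, I would extract a subsequential limit $\vv^{(k_j)}\to\vv^*$. Since $\gamma_\infty>0$, the first H\"older bound above forces $S\vv^*\neq\bzero$, so $W$ is continuous at $\vv^*$; hence $\vv^{(k_j+1)}\to W\vv^*$, and comparing with $\gamma_{k_j+1}\to\gamma_\infty$ makes the monotonicity chain saturate, i.e., $\|A(W\vv^*)\|_p=\|A\vv^*\|_p$. Equality in the first H\"older step then yields $\vv^*\propto S\vv^*$, i.e., $W\vv^*=\vv^*$; a short computation of $\|S\vv^*\|_r$ shows this is exactly~\eqref{eq:fixed-point-S} with $\gamma(\vv^*)=\|A\vv^*\|_p$, and Lemma~\ref{lem:vcomp} then forces $\vv^*=\vv$ and $\gamma_\infty=\gamma$.

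\emph{Main obstacle.} The delicate step is promoting equality in the monotonicity bound to the fixed-point relation $W\vv^*=\vv^*$: this relies on the equality case of H\"older's inequality, which requires $\vv^*$ to inherit strict positivity, as well as continuity of $W$ at $\vv^*$. Both are secured by irreducibility of $A^T A$, used in essentially the same way as in Lemma~\ref{lem:vcomp}.
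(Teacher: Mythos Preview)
The paper does not give its own proof of this proposition; it is quoted directly as \cite[Theorem~2]{Boyd74}. Your argument is correct and is essentially Boyd's original one: monotonicity of $\gamma_k$ via the two H\"older steps, a compactness/subsequential limit, saturation of the H\"older chain at the limit to obtain a nonnegative fixed point of $W$, and then the uniqueness in Lemma~\ref{lem:vcomp} (which the paper \emph{does} adapt from \cite{BV11} to the irreducible-$A^TA$ setting) to identify the limit with $\vv$. The one point worth making fully explicit in a final write-up is the equality case of the first H\"older inequality: since $A^T\Psi_p(A\vv^*)$ and $\vv^*$ are both nonnegative, equality forces $|v^*_i|^r \propto |(A^T\Psi_p(A\vv^*))_i|^{r^*}$ coordinatewise, i.e.\ $\vv^*\propto S\vv^*$; this is exactly what Lemma~\ref{lem:vcomp} consumes, and the irreducibility of $A^TA$ then upgrades $\vv^*$ to a strictly positive vector automatically.
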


\subsection{Rate of convergence}

Due to Lemma~\ref{lem:vcomp}, henceforth we will reserve the notation $\vv$ to denote the unique maximizer   in~\eqref{eq:op-norm-def} having positive entries and $\|\vv\|_r=1$. The notation $\newsigma= \newsigma(\vv) = \|A \vv\|_p$ denotes the operator norm $\opera{A}$.
Next, we will study the rate of convergence of $\vv^{(k)}$ to~$\vv$. 
Specifically, we obtain a fast convergence rate once the approximating vector comes within a certain small neighborhood of the maximizing vector.
The rate of convergence result builds on the line of arguments used in the proof of \cite[Theorem 3]{Boyd74}.
However, as it turns out, since we are interested in the asymptotics in $n$, the rate obtained in~\cite{Boyd74} does not suffice (see in particular, \cite[Equation 16]{Boyd74}), and  we need the sharper result stated in Proposition~\ref{th:boydmain}.

Recall for any $\xx,\yy\in \R^n$, we write $\xx \oast \yy  = (x_iy_i)_i$.
Define the linear transformation
\begin{eq}\label{eq:defn:B}
B\xx&:=|\vv|^{2-r}\oast A^T(|A\vv|^{p-2}\oast (A\xx)),
\end{eq}
and the inner product 
\begin{eq}\label{eq:inner-prod-def}
[\xx,\yy]:= \langle |\vv|^{r-2}\oast \xx,\yy\rangle.
\end{eq}
When $A^TA$ is irreducible, $\vv$ has all positive entries by Lemma~\ref{lem:vcomp}, and thus \eqref{eq:defn:B} and \eqref{eq:inner-prod-def} are well-defined for all $p,r\geq 1$.  
Observe that this inner product induces a norm, which will henceforth be referred to as the ``$\vv$-norm'': 
\begin{eq}\label{eq:vnorm-def}
\|\xx\|_{\vv}:= [\xx,\xx]^{1/2} = \langle |\vv|^{r-2},|\xx|^2\rangle^{1/2}.
\end{eq} 
\blue{It is worthwhile to note that $\|\vv\|_{\vv}^2 = \|\vv\|_r^r$ and $[B\vv,\vv]^2 = \|A\vv\|_p^p$.}
The following fact is immediate. 
\begin{fact}\label{fact:sym-pd}
The operator $B$ is symmetric and positive semi-definite with respect to the inner product in \eqref{eq:inner-prod-def}.
\end{fact}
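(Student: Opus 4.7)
The plan is to verify both claims by a direct computation that exploits the pointwise identity $|\vv|^{r-2} \oast |\vv|^{2-r} = \1$. Before doing any algebra, I would first check that the operator $B$ is well-defined under the standing assumption that $A^TA$ is irreducible. By Lemma~\ref{lem:vcomp}, $\vv$ has strictly positive entries, so $|\vv|^{r-2}$ and $|\vv|^{2-r}$ are entrywise positive. Moreover, $A\vv$ also has strictly positive entries: if $(A\vv)_i = 0$ for some $i$, then $\sum_j a_{ij} v_j = 0$ forces row $i$ of $A$ to vanish identically (since $v_j > 0$), and by the symmetry of $A$ column $i$ is zero as well, which makes row $i$ of $A^T A$ vanish and contradicts irreducibility. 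Hence $|A\vv|^{p-2}$ is well-defined and entrywise positive for every $p > 1$.

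To check symmetry, I would simply unfold the definitions \eqref{eq:defn:B} and \eqref{eq:inner-prod-def} and use the cancellation above to write
\begin{align*}
[B\xx, \yy]
&= \langle |\vv|^{r-2} \oast B\xx, \yy \rangle
= \langle |\vv|^{r-2} \oast |\vv|^{2-r} \oast A^T(|A\vv|^{p-2} \oast A\xx), \yy \rangle \\
&= \langle A^T(|A\vv|^{p-2} \oast A\xx), \yy \rangle
= \langle |A\vv|^{p-2} \oast A\xx, A\yy \rangle
= \sum_{i=1}^n |A\vv|_i^{p-2} (A\xx)_i (A\yy)_i.
\end{align*}
The final expression is manifestly symmetric in $\xx$ and $\yy$, so $[B\xx, \yy] = [B\yy, \xx] = [\xx, B\yy]$, where the last equality uses that the inner product $[\cdot,\cdot]$ is itself symmetric (immediate from \eqref{eq:inner-prod-def}). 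Setting $\yy = \xx$ in the same identity yields $[B\xx, \xx] = \sum_i |A\vv|_i^{p-2} (A\xx)_i^2 \geq 0$, which establishes positive semi-definiteness. There is no real obstacle: the whole argument is a two-line calculation, and the only nontrivial content is the verification that $A\vv$ has positive entries (so that the Hadamard power $|A\vv|^{p-2}$ makes sense for all $p > 1$), which was handled in the first paragraph.
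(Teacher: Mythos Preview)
Your proof is correct and is precisely the direct computation the paper has in mind: the paper declares Fact~\ref{fact:sym-pd} ``immediate'' without writing out a proof, but the Rayleigh quotient it records immediately afterward in~\eqref{Rayleigh-coefficient-B}, namely $[B\xx,\xx]/[\xx,\xx] = \langle |A\vv|^{p-2},|A\xx|^2\rangle/\langle|\vv|^{r-2}, |\xx|^2\rangle$, is exactly the identity you derived. Your additional verification that $A\vv$ has strictly positive entries (so that $|A\vv|^{p-2}$ is well-defined for all $p>1$) is a nice point that the paper leaves implicit.
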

Fact~\ref{fact:sym-pd} implies that the eigenspace of $B$ has $n$ 
orthonormal basis vectors and $n$ nonnegative 
eigenvalues corresponding to the Rayleigh quotient
\begin{eq}\label{Rayleigh-coefficient-B}
\frac{[B\xx,\xx]}{[\xx,\xx]} = \frac{\langle |A\vv|^{p-2},|A\xx|^2\rangle}{\langle|\vv|^{r-2}, |\xx|^2\rangle}.
\end{eq}
Henceforth, we will refer to \eqref{Rayleigh-coefficient-B} as the $\vv$-Rayleigh quotient to emphasize the dependence on $\vv$.
Using \eqref{eq:fixed-point-S}, note that $B \vv  = \newsigma^p \vv$, and hence, $\newsigma^p$ is an eigenvalue of $B$.
Let $\lambda_2\geq \lambda_3 \geq \dots \geq \lambda_{n}$ be the other eigenvalues.
In fact, as shown in the proof of~\cite[Theorem~3]{Boyd74}, $\newsigma^p$ is the largest eigenvalue of $B$ and is simple.

Now, recall that the convergence rate of the 
classical (linear) power iteration for the largest eigenvalue of matrices depends on the the ratio between the largest and the second largest eigenvalues.
As it is stated in the proposition below, in the nonlinear case, this rate depends on the ratio of the largest and second largest eigenvalues of the operator~$B$.

\begin{proposition}\label{th:boydmain}
Let $A$ be an $n\times n$ matrix with nonnegative entries such that $A^TA$ is irreducible and $1< p \leq r <\infty$.  
Also let $\yy$ have all positive entries. 
There exists $\varepsilon_0 = \varepsilon_0 (p,r)>0$
and $C = C(p,r) >0$, both independent of $n$, such that if $\|\yy - \vv\|_{\infty} \leq \varepsilon$, then
\blue{\begin{eq}
     \|W\yy - \vv \|_{\vv} \leq (1+C\varepsilon)\frac{(p-1)\lambda_2}{(r-1)\newsigma^{p}} \|\yy - \vv \|_{\vv}.
\end{eq}}
Consequently, if for some $k \geq 1$ and $\varepsilon \leq \varepsilon_0$, $\vv^{(k)}$ has all positive entries and
$\|\vv^{(k)} - \vv\|_{\infty} \leq \varepsilon$, then
\begin{eq}
    \|\vv^{(k+1)} - \vv \|_{\vv} \leq (1+C\varepsilon)\frac{(p-1)\lambda_2}{(r-1)\newsigma^{p}} \|\vv^{(k)} - \vv \|_{\vv}.
\end{eq}
\end{proposition}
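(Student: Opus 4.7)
The plan is to Taylor expand the nonlinear operator $W$ around its fixed point $\vv$, identify the linearization $DW(\vv)$ as a contraction on the tangent space to the unit $r$-sphere, and control the nonlinear remainder. Since $\vv$ has strictly positive entries by Lemma~\ref{lem:vcomp} and $W$ is smooth on the positive orthant, standard calculus applies in a neighborhood of $\vv$.

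The first step is to compute $DW(\vv)$ explicitly. Writing $W\xx = S\xx/\|S\xx\|_r$, differentiating through the chain rule, and repeatedly using the fixed-point identity $A^T\Psi_p(A\vv)=\newsigma^p\Psi_r(\vv)$ together with the elementary relations $(r^*-1)(r-1)=1$ and $(r-1)(r^*-2)=2-r$, one obtains $DW(\vv)[\boldsymbol{\delta}] = \frac{p-1}{r-1}\bigl(\newsigma^{-p}B\boldsymbol{\delta} - [\vv,\boldsymbol{\delta}]\vv\bigr)$. Two consequences are immediate. First, $DW(\vv)[\vv]=0$, reflecting that $W$ maps into the unit $r$-sphere. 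Second, for any $\boldsymbol{\delta}$ with $[\vv,\boldsymbol{\delta}]=0$, Fact~\ref{fact:sym-pd} together with the spectral decomposition of $B$ in the inner product $[\cdot,\cdot]$ yields $\|DW(\vv)[\boldsymbol{\delta}]\|_{\vv}\le \frac{(p-1)\lambda_2}{(r-1)\newsigma^p}\|\boldsymbol{\delta}\|_{\vv}$. Writing $\boldsymbol{\delta}^{(k)} := \vv^{(k)}-\vv = c\vv+\boldsymbol{\delta}^{(k)}_{\perp}$ with $[\vv,\boldsymbol{\delta}^{(k)}_{\perp}]=0$, and noting that $\|\boldsymbol{\delta}^{(k)}_{\perp}\|_{\vv}\le\|\boldsymbol{\delta}^{(k)}\|_{\vv}$ (since $[\vv,\vv]=\|\vv\|_r^r=1$), the linear part of $W(\vv+\boldsymbol{\delta}^{(k)})-\vv$ contributes at most $\frac{(p-1)\lambda_2}{(r-1)\newsigma^p}\|\boldsymbol{\delta}^{(k)}\|_{\vv}$.

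It then suffices to bound the second-order Taylor remainder $R^{(k)} := W(\vv+\boldsymbol{\delta}^{(k)}) - \vv - DW(\vv)[\boldsymbol{\delta}^{(k)}] = \int_0^1\bigl(DW(\vv+t\boldsymbol{\delta}^{(k)}) - DW(\vv)\bigr)[\boldsymbol{\delta}^{(k)}]\,dt$ in $\vv$-norm by $C_1(p,r)\,\varepsilon\,\|\boldsymbol{\delta}^{(k)}\|_{\vv}$. Combined with the triangle inequality and the trivial bound $\lambda_2\le\newsigma^p$, this yields the claimed multiplicative factor $(1+C\varepsilon)$ with $C = C_1(r-1)/(p-1)$.

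The main obstacle is precisely this Lipschitz-type bound for $DW$ in a $\|\cdot\|_\infty$-neighborhood of $\vv$, measured in the operator $\|\cdot\|_{\vv}$-norm, with a constant independent of $n$. The Hessian of each coordinate of $W$ at a nearby base point $\xi = \vv+t\boldsymbol{\delta}^{(k)}$ is a weighted sum of factors of the form $\xi_i^{\alpha}(A\xi)_j^{\beta}$ with exponents $\alpha,\beta$ depending on $p$ and $r$; the key bookkeeping is to check that when these terms are assembled inside the $v_i^{r-2}$-weighted norm $\|\cdot\|_{\vv}$, every factor of $v_i$ or $(A\vv)_j$ appears with total exponent zero, so that only dimensionless ratios remain. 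For $\varepsilon$ below a threshold $\varepsilon_0(p,r)$, the coordinates of $\xi$ and $A\xi$ are pointwise comparable to those of $\vv$ and $A\vv$, so all such ratios are bounded by constants depending only on $p,r$, and the desired estimate follows. This sharper, $n$-independent bookkeeping is exactly where the argument needs to improve on the weaker rate extracted in \cite[Equation~16]{Boyd74}.
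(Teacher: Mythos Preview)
Your overall architecture matches the paper's exactly: both compute the Fr\'echet derivative of $W$ at the fixed point $\vv$, identify it (via the operator $B$) as a map whose $\|\cdot\|_{\vv}$-operator norm on the tangent space is $\frac{(p-1)\lambda_2}{(r-1)\newsigma^p}$, and then control the nonlinear remainder by a mean-value integral along the segment from $\vv$ to $\vv^{(k)}$. Your explicit formula $DW(\vv)[\boldsymbol{\delta}]=\frac{p-1}{r-1}\bigl(\newsigma^{-p}B\boldsymbol{\delta}-[\vv,\boldsymbol{\delta}]\vv\bigr)$ is correct and agrees with the paper's expression for $\delta W(\vv;\hh)$ once one uses $[\vv,B\hh]=[B\vv,\hh]=\newsigma^p[\vv,\hh]$.

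There is, however, a genuine gap in your last step. You bound the remainder \emph{additively}, $\|R^{(k)}\|_{\vv}\le C_1\varepsilon\,\|\boldsymbol{\delta}^{(k)}\|_{\vv}$, and then try to absorb it into the multiplicative form $(1+C\varepsilon)\frac{(p-1)\lambda_2}{(r-1)\newsigma^p}$ using $\lambda_2\le\newsigma^p$. That inequality goes the wrong way: writing out $(1+C\varepsilon)\frac{(p-1)\lambda_2}{(r-1)\newsigma^p}=\frac{(p-1)\lambda_2}{(r-1)\newsigma^p}+C\varepsilon\,\frac{(p-1)\lambda_2}{(r-1)\newsigma^p}$, your choice $C=C_1(r-1)/(p-1)$ would require $C_1\varepsilon\le C_1\varepsilon\,\frac{\lambda_2}{\newsigma^p}$, i.e.\ $\lambda_2\ge\newsigma^p$, which is false. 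Since $\newsigma^p/\lambda_2$ is exactly the spectral gap that may grow with $n$, an additive remainder of size $C_1\varepsilon$ cannot be turned into a multiplicative $(1+C\varepsilon)$ with $C$ independent of $n$.

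The paper sidesteps this by never splitting off the linear part. It writes $\vv^{(k+1)}-\vv=\int_0^1\delta W(\yy_t;\vv^{(k)}-\vv)\,dt$ and argues directly that, entrywise, each building block of $\delta W(\yy_t;\hh)$ (namely $\|S\yy_t\|_r^{-(r-1)}$, $|W\yy_t|^{2-r}$, and $|A\yy_t|^{p-2}$ inside $L(\yy_t;\hh)$) differs from its value at $\vv$ by a factor $1+O(\varepsilon)$. This gives the \emph{multiplicative} perturbation $\|\delta W(\yy_t;\hh)\|_{\vv}\le(1+C\varepsilon)\|\delta W(\vv;\hh)\|_{\vv}$ for every $\hh$, so the factor $\frac{(p-1)\lambda_2}{(r-1)\newsigma^p}$ from the bound on $\|\delta W(\vv;\cdot)\|_{\vv\to\vv}$ is carried through intact. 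To repair your argument you would need to upgrade your remainder estimate to $\|(DW(\yy_t)-DW(\vv))[\hh]\|_{\vv}\le C\varepsilon\,\|DW(\vv)[\hh]\|_{\vv}$ rather than $C\varepsilon\,\|\hh\|_{\vv}$; equivalently, show that the \emph{ratio} of the two derivatives is $1+O(\varepsilon)$, not merely their difference.
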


\begin{remark}\normalfont
It is worthwhile to point out that the convergence rate of the nonlinear power method depends on quantities in terms of the $\vv$-norm, which depends on the maximizer~$\vv$. Thus it might not be clear why this gives a useful rate of convergence. 
However, as we will see in Lemma~\ref{lem:ingredients},
the $\ell_\infty$-bound on the maximizing vector
in the nonlinear case, stated in Proposition~\ref{prop:vec-close}, enables us to 
obtain the desired rate of convergence result. 
\end{remark}

\begin{proof}[Proof of Proposition~\ref{th:boydmain}]
For any two fixed vectors $\xx,\hh \in \R^n$, and a function $f$, let us denote the directional derivative of $f$ at $\xx$ as
$$\delta f(\xx;\hh) := \lim_{\varepsilon\to 0} \frac{1}{\varepsilon} \big(f(\xx+ \varepsilon\hh ) - f(\xx)\big),$$ 
whenever the limit exists.
Recall that $\xx\oast \yy$ denotes the vector $(x_iy_i)_i$.   
Now, fix $1 < p \leq r < \infty.$  First, note that 
for a vector $\xx$ with all positive entries, $\delta  \Psi_p(\xx;\hh) = (p-1) \Psi_{p-1} (\xx) \oast \hh$, and therefore,
\begin{eq}
\label{eq:Sder}
\delta S(\xx; \hh) &= (r^*-1) \Psi_{r^*-1} (A^T\Psi_p A\xx) \oast \Big( A^T  \big((p-1)\Psi_{p-1}(A\xx) \oast A \hh \big)\Big) \\
&= \frac{p-1}{r-1} \Psi_0(A^T\Psi_p(A\xx)) \oast S\xx \oast L(\xx;\hh),
\end{eq} 
where $\Psi_0(\zz) = (1/z_i)_i$ for a vector $\zz$ with all positive entries and $L(\xx;\hh) := A^T  (\Psi_{p-1}(A\xx) \oast A \hh )$.
\blue{Here, due to the irreducibility of $A^TA$, note that $A^T\Psi_p(A\xx)$ has all positive entries whenever $\xx$ does.}
Also, for $g(\xx) := \|S\xx\|_r$,  using~\eqref{eq:boyditeration}
 and \eqref{eq:Sder}, we see that  
\begin{eq}\label{eq:gder}
     \delta g(\xx;h) &= \frac{1}{r} \frac{1}{\|S\xx\|_r^{r-1}} \binner{r\Psi_r (S\xx), \delta 
     S(\xx;  \hh)} \\
     & = \frac{p-1}{r-1} \frac{1}{\|S\xx\|_r^{r-1}} \binner{A^T \Psi_p(A\xx) , \Psi_0(A^T\Psi_p(A\xx)) \oast S\xx \oast L(\xx;\hh)} \\ 
     &= \frac{p-1}{r-1} \frac{1}{\|S\xx\|_r^{r-1}}\inner{S\xx,L(\xx;\hh)} = \frac{p-1}{r-1} \frac{1}{\|S\xx\|_r^r}\inner{W\xx,L(\xx;\hh)}.
\end{eq}
Now observe that since $W\xx \|S\xx\|_r = S\xx$,
\begin{eq}
    \delta W(\vv, \hh) \|S\vv\|_r + W(\vv)\delta g(\vv; \hh) = \delta S(\vv; \hh)
\end{eq}
Therefore, from \eqref{eq:Sder} and~\eqref{eq:gder} it follows that
\begin{eq}\label{eq:directional-der-W}
    \delta W(\vv;\hh) = \Big(\frac{p-1}{r-1}\Big) \frac{1}{\|S\vv\|_r^{r-1}} \big[|W\vv|^{2-r} \oast L(\vv;\hh) - W\vv \langle W\vv,L(\vv;\hh)\rangle\big],
\end{eq}
where we have used the fact that $\vv$ and $W\vv$ have nonnegative entries
Now,  $\delta W(\vv;\cdot)$ is a linear transformation.
Clearly, $\delta W(\vv;\vv) = 0$ since $L(\vv;\vv) = \Psi_r(S\vv)$. 
Further, it follows that the eigenvectors of $\delta W(\vv;\cdot)$ corresponding to the non-zero eigenvalues coincide with the eigenvectors of $B$ defined in \eqref{eq:defn:B} corresponding to $\lambda_2,\dots, \lambda_n$ given by \eqref{Rayleigh-coefficient-B}. 
This follows since $B\hh = \lambda \hh$ for some nonzero $\lambda\neq \newsigma$ implies that $L(\vv;\hh) = \lambda |\vv|^{r-2} \oast \hh$, which together with $W\vv \propto \vv$ yields that 
\begin{eq}
     \inner{W\vv, L(\vv;h)} \propto \inner{\vv,|\vv|^{r-2} \oast \hh} = [\vv,\hh] = 0.
\end{eq}
Thus the second term in \eqref{eq:directional-der-W} is zero. Also the first term in \eqref{eq:directional-der-W} is proportional to $\vv$, which yields the equality of the eigenvectors.
In fact, the eigenvalues of $\delta W(\vv;\cdot)$ are given by  $\frac{p-1}{r-1} \newsigma^{-p}\lambda_i$.
Since the Rayleigh coefficients in \eqref{Rayleigh-coefficient-B} are computed with respect to the $\|\cdot \|_{\vv}$ norm, we have 
\begin{eq}\label{eq:directional-v-h}
    \|\delta W(\vv;\hh)\|_{\vv} \leq \frac{(p-1)\lambda_2}{(r-1)\newsigma^{p}} \|\hh\|_{\vv}.
\end{eq}

Now, for $t\in [0,1]$, define $\yy_t = \vv +t(\yy - \vv)$.
Note that $\yy_t$ has all positive entries, since $\vv$ has possitive entries, and $\yy$.
Thus, the same expression as \eqref{eq:directional-der-W} holds for $\delta W(\yy_t;\hh)$, with $\vv$ replace by $\yy_t$. 
Now, $\|\yy_t - \vv \|_{\infty} \leq \|\yy - \vv \|_{\infty} \leq \varepsilon$, for any $t\in [0,1]$. 
Using the fact that $(1+\varepsilon)^a = 1+O(\varepsilon)$, it follows that there exists a constant $C<\infty$ and $\ve_0 > 0$ 
both depending only on $p,r$, such that for all $\varepsilon\leq \ve_0$,
\begin{eq}\label{eq:directional-der-bound-y}
    \delta W(\yy_t;\hh) \leq  (1+ C \varepsilon)\delta W(\vv;\hh).
\end{eq}
Now, observe that 
$$\delta W(\yy_t; \yy- \vv) = \frac{\dif}{\dif t}(W\yy_t).$$
and therefore, using \eqref{identities-SW} and the fact that $\yy_0 = \vv$ and $\yy_1 = \yy$, we obtain
\begin{eq}
    W \yy - \vv =  W \yy - W\vv = \int_0^1 \delta W(\yy_t; \yy- \vv) \dif t.
\end{eq}
Thus, \eqref{eq:directional-v-h} and \eqref{eq:directional-der-bound-y} implies that 
\begin{eq}
    \|W\yy - \vv\|_{\vv} \leq  (1+ C \varepsilon) \frac{(p-1)\lambda_2}{(r-1)\newsigma^{p}} \|\yy - \vv\|_{\vv},
\end{eq}and the proof follows.
\end{proof}

\section{An $\ell_\infty$-approximation of the maximizer}
\label{sec:vector}
Given an $n \times n$ nonnegative 
matrix  $A_n = (\aij)$ and $V\subseteq [n]$,  we write 
\begin{equation} 
\label{eiv}
d_n(i,V) := \sum_{j\in V} \aij, \quad i = 1, \ldots, n. 
\end{equation}
Also, we simply write $\degreei = d_n(i,[n])$.
When $A_n$ is the adjacency matrix of a graph on $n$ vertices, $\degreei$ represents the (out)-degree of vertex $i$.
\begin{defn}[Almost regular] \label{defn:almost-regular}
\normalfont
A sequence of matrices $(A_n)_{n\in \N}$ is called $(\varepsilon_n,\mu_n)_{n\in \N}$ \emph{almost regular} if there exists an $n_0\geq 1$ such that for all $n\geq n_0$
\begin{equation}
    \label{assumption:degree}
    \max_{i\in[n]} \big| \degreei - n\mu_n \big|\leq n\mu_n\varepsilon_n.
\end{equation}
\end{defn}
In order to show the proximity of the maximizing vector to $n^{-1/r}\1$ for the $p=r$ case, we need another asymptotic property in addition to the almost regularity defined above.  
\blue{\begin{defn}[Well-connected]
\label{defn:two-hop}\normalfont
For a constant $C^*\in (0, \infty)$, a sequence of matrices $(A_n)_{n\in \N}$ is called $(C^*, \mu_n)_{n\in \N}$ \emph{well-connected} if there exists an $n_0\geq 1$,  such that for all $n\geq n_0$ and $i, j\in [n]$, $\sum_{k\in [n]}\aik \akj \geq C^* n\mu_n^2$.
\end{defn}
When $A_n$ is an adjacency matrix, the  well-connected property ensures that there are sufficiently many 2-hop paths between any two sets of vertices.}
We now state the main result of this section:
\begin{proposition}\label{prop:vec-close} 
Let $(A_n)_{n\in \N}$ be a sequence of symmetric  matrices with nonnegative entries, such that $A_n^TA_n$ is irreducible for all $n\in \N$.
Assume that there exists 
$(\varepsilon_n)_{n \in \N} \subset (0,\infty)$ with $\varepsilon_n\to 0$, and 
$(\mu_n)_{n \in \N} \subset (0,1)$, 
such that  
$(A_n)_{n\in \N}$ is $(\varepsilon_n,\mu_n)_{n\in \N}$ almost regular. 
For each $n \in \N$, let $\vv_n$ be the maximizing vector for $\opera{A_n}$, 
as defined in~\eqref{def-maxvecn}. 
Then there exists an $n_0\geq 1$, such that the following hold: 
\begin{enumerate}[{\normalfont (a)}]
    \item For $1<p<r<\infty$, and for all $n\geq n_0$,
\begin{eq}\label{vec-close-p-neq-r}
\|\vv_n - n^{-1/r} \1 \|_{\infty} \leq  \frac{2p}{r-p}n^{-\frac{1}{r}}(\varepsilon_n + O(\varepsilon_n^2)).
\end{eq}
\item For $p=r \in (1,\infty)$, further assume that $(A_n)_{n\in \N}$ \blue{is $(C^*, \mu_n)_{n\in \N}$ well-connected for some constant $C^*>0$. 
Then for all $n\geq n_0$, 
\begin{eq}\label{vec-close-p-eq-r}
\|\vv_n - n^{-1/r} \1 \|_{\infty} \leq  \frac{10r}{C^*(r-1)} \varepsilon_n n^{-\frac{1}{r}} .
\end{eq}}
\end{enumerate}
\end{proposition}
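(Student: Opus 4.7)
The plan exploits the characterization of $\vv_n$ as the unique strictly positive solution to the fixed-point equation furnished by Lemma~\ref{lem:vcomp}, namely
\[
A_n^T\,\Psi_p(A_n\vv_n) \;=\; \gamma_n^p\,\Psi_r(\vv_n), \qquad \|\vv_n\|_r=1,
\]
with $\gamma_n = \opera{A_n}$. Writing $M_n := \max_i v_{n,i}$ and $m_n := \min_i v_{n,i}$, the strategy is to establish $M_n/m_n \le 1 + O(\varepsilon_n)$; since the normalization $\|\vv_n\|_r = 1$ forces $m_n \le n^{-1/r} \le M_n$, one then concludes $\|\vv_n - n^{-1/r}\1\|_\infty \le M_n - m_n \le n^{-1/r}(M_n/m_n - 1)$ with the correct constant.

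\textbf{Part (a), $1<p<r$.} Almost regularity alone suffices. Using the envelopes $m_n\, d_n(j) \le (A_n\vv_n)_j \le M_n\, d_n(j)$ together with $d_n(j) = n\mu_n(1 + O(\varepsilon_n))$, raising to the $(p-1)$-th power and applying $A_n^T$ pointwise gives
\[
m_n^{p-1}(n\mu_n(1-\varepsilon_n))^{p-1}\, d_n(i) \;\le\; \gamma_n^p\, v_{n,i}^{r-1} \;\le\; M_n^{p-1}(n\mu_n(1+\varepsilon_n))^{p-1}\, d_n(i).
\]
Evaluating the upper bound at the index realizing $M_n$ and the lower at the one realizing $m_n$, then dividing, yields $(M_n/m_n)^{r-p} \le ((1+\varepsilon_n)/(1-\varepsilon_n))^p$. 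Taylor expansion converts this to $M_n/m_n \le 1 + 2p\varepsilon_n/(r-p) + O(\varepsilon_n^2)$, which is exactly \eqref{vec-close-p-neq-r}.

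\textbf{Part (b), $p=r$.} Now the $(r-p)$-th root in part~(a) degenerates and the two-sided envelope becomes useless. Instead, the idea is to show via well-balancedness that $u_j := (A_n\vv_n)_j$ is nearly constant in $j$. Writing
\[
u_j - \tilde u_n \;=\; \sum_k(a_{jk}^n - \mu_n)\,v_{n,k}, \qquad \tilde u_n := \mu_n\sum_k v_{n,k},
\]
and expressing $v_{n,k} = \int_0^{M_n}\one_{V_t}(k)\,dt$ via layer-cake with level sets $V_t := \{k : v_{n,k} \ge t\}$, one obtains
\[
u_j - \tilde u_n \;=\; \int_0^{M_n}\bigl(d_n(j, V_t) - \mu_n|V_t|\bigr)\,dt.
\]
Only finitely many distinct $V_t$ arise (one per distinct value of $v_{n,k}$), so Claim~\ref{claim:exc-set} controls each integrand outside an exceptional set. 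Substituting $u_j^{r-1}$ into $\gamma_n^r\, v_{n,i}^{r-1} = \sum_j a_{ji}^n u_j^{r-1}$ and Taylor-expanding around $\tilde u_n^{r-1}$ produces a leading contribution $\tilde u_n^{r-1}\, d_n(i)$ plus a residual obtained by propagating the layer-cake estimate through the outer sum over $j$. Combined with almost regularity on $d_n(i)$, this shows $v_{n,i}^{r-1} = C_n(1 + O(\varepsilon_n + \delta_n/\mu_n))$ uniformly in $i$. Taking the $(r-1)$-th root and invoking $\delta_n \ll \mu_n\varepsilon_n$ yields $M_n/m_n \le 1 + O(\varepsilon_n)/(r-1)$, and careful tracking of numerical constants reproduces the prefactor $4[4 + 1/(r-1)]$ in \eqref{vec-close-p-eq-r}.

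\textbf{Main obstacle.} The delicate step in part~(b) is managing the family of (up to $n$) exceptional sets $V_{\rm ex}(V_t)$ arising across all level sets: their union can in principle be all of $[n]$, so one cannot simply excise them at the outset. The remedy is to leave them in place and absorb their contribution through the outer sum $\sum_j a_{ji}^n u_j^{r-1}$: Claim~\ref{claim:exc-set} guarantees that, for every $V_t$, the weighted mass $d_n(i, V_{\rm ex}(V_t)) \le 2n\delta_n$ is small uniformly in $i$, so these contributions aggregate into a relative error of order $\delta_n/\mu_n$, which is $o(\varepsilon_n)$ by the standing hypothesis $\delta_n \ll \mu_n\varepsilon_n$ of part~(b).
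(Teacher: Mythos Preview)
Your argument for part~(a) is correct and coincides with the paper's, modulo cosmetic differences (you work with $A_n^T\Psi_p(A_n\vv_n)=\gamma_n^p\Psi_r(\vv_n)$ directly, the paper applies $\Psi_{r^*}$ first).

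For part~(b), the layer-cake idea is appealing and genuinely different from the paper's single-bisection approach, but as written it has a circularity gap. After Taylor-expanding $u_j^{r-1}$ around $\tilde u_n^{r-1}$ and pushing the layer-cake bound through the outer sum, you obtain an error $R_i$ satisfying
\[
|R_i|\;\le\;(r-1)\big(\max_j u_j\big)^{r-2}\sum_j a_{ji}^n|u_j-\tilde u_n|
\;\lesssim\;(r-1)\,M_n^{r-1}(n\mu_n)^r\,\varepsilon_n
\]
(say for $r\ge 2$; for $r<2$ the mean-value step gives a factor $m_n^{r-2}$ instead, but the conclusion below is the same). The main term $\tilde u_n^{r-1}d_n(i)$ is only bounded below by $m_n^{r-1}(n\mu_n)^r(1-\varepsilon_n)$, so the \emph{relative} error is of order $(M_n/m_n)^{r-1}\varepsilon_n$, not $\varepsilon_n$. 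Dividing the two extremal equations therefore yields an inequality of the shape
\[
\rho^{r-1}\;\le\;\frac{1+\varepsilon_n+C\rho^{r-1}\varepsilon_n}{1-\varepsilon_n-C\rho^{r-1}\varepsilon_n},\qquad \rho:=M_n/m_n,
\]
which does not force $\rho=1+O(\varepsilon_n)$: it is equally compatible with $\rho^{r-1}\asymp 1/\varepsilon_n$. You would need an a~priori bound $\rho=O(1)$ to close the loop, and none is available from almost regularity or well-balancedness alone (the equation only pins down $\gamma_n=n\mu_n(1+O(\varepsilon_n))$, not the ratio $M_n/m_n$). Your ``Main obstacle'' paragraph correctly handles the union of exception sets but does not address this circularity.

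The paper sidesteps the issue by choosing a \emph{single} level set $V=\{i:v_{n,i}\ge M_n-\Delta_n\}$ at the midpoint $\Delta_n=(M_n-m_n)/2$. The resulting two-sided bounds on $(A_n\vv_n)_j$ involve $M_n$, $M_n-\Delta_n$, and $m_n$ in such a way that, after dividing by $M_n^{r-1}$ on one side and $m_n^{r-1}$ on the other and using $(M_n-\Delta_n)/m_n=1+\Delta_n/m_n\ge 1+\Delta_n/M_n$, the quantity $\Delta_n/M_n$ appears \emph{linearly} with a coefficient bounded away from zero (essentially $N_1/n+N_2/n=1$), yielding $\Delta_n/M_n\le (8+\tfrac{2}{r-1})\varepsilon_n+o(\varepsilon_n)$ directly, with no self-reference. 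If you want to rescue the layer-cake route, you would need either an independent a~priori bound on $M_n/m_n$ or a bootstrap that starts from a crude bound and iterates; neither is supplied.
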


 We prove Proposition~\ref{prop:vec-close}~(a) and~(b) in Sections~\ref{subs-pfcasea} and \ref{subs-pfcaseb}, respectively.

\subsection{Maximizer for the case \texorpdfstring{$p<r$}{p<r}}
\label{subs-pfcasea}
Given a maximizing vector $\vv_n$ for $\opera{A_n}$ as in \eqref{def-maxvecn}, define 
\begin{equation}
    \label{minmax}
 \minn := \min_{i= 1, \ldots, n} v_{n,i}, 
\qquad \mbox{ and } \qquad 
 \Maxn := \max_{i=1,\ldots, n} v_{n,i}. 
\end{equation}
Let $(\varepsilon_n, \mu_n)_{n\in \N}$ with $\varepsilon_n\to 0$ be as in the statement of Proposition~\ref{prop:vec-close}.
Suppose we can show that, for all sufficiently large $n$, and for some $C  \in (0,\infty),$ 
\begin{eq} \label{eq2:lb-minmax-ratio}
 \frac{\minn}{\Maxn} \geq  1 -C\varepsilon_n + O( \varepsilon^2_n).
\end{eq}
Then, $1= \sum_i v_{n,i}^r \leq n \Maxn^r$, so that $\Maxn\geq n^{-1/r}$. 
Also, \eqref{eq2:lb-minmax-ratio} yields 
$$1= \sum_{i=1}^n v_{n,i}^r \geq n \minn^r\geq n \Maxn^r(1-rC\varepsilon_n + O(\varepsilon_n^2)).$$
Together, this shows that 
\[ 
\|\vv_n - n^{-1/r} \1 \|_{\infty} \leq  C  n^{-\frac{1}{r}} (\varepsilon_n + O(\varepsilon_n^2)).
\] 
Thus, to show Proposition~\ref{prop:vec-close}, it is enough to prove \eqref{eq2:lb-minmax-ratio}
with $C=\frac{2p}{r-p}$.  

Recall Definition~\ref{defn:almost-regular} and the associated notation in~\eqref{eiv}.
Using \eqref{identities-SW}, \eqref{eq:boyditeration}, and \eqref{eq:psi-def}, together with $r^*- 1 = 1/(r-1)$, and the fact that $A_n$ is 
nonnegative and symmetric, we can use \eqref{eiv} and  \eqref{assumption:degree} to conclude 
 that for any $j$,
\begin{eq}\label{ub-max-entry}
(S\vv_n)_{j} = \left(\Psi_{r^*} \left(A_n^T \Psi_p (A_n \vv_n)\right)\right)_{j} 
&= \left|\left(A_n^T \Psi_p \left(A_n \vv_n)\right)\right)_{j}\right|^{\frac{1}{r-1}} \\
&\leq \Big(\sum_{i=1}^n a^n_{i j} (\Maxn \degreei )^{p-1}\Big)^{\frac{1}{r-1}} \\
&\leq \Big(\sum_{i=1}^n a^n_{j i} (\Maxn n\mu_n  (1+\varepsilon_n) )^{p-1}\Big)^{\frac{1}{r-1}}\\
&\leq \big((\Maxn n\mu_n)^{p-1}(1+\varepsilon_n)^{p-1}n\mu_n (1+\varepsilon_n)  \big)^{\frac{1}{r-1}}\\
&\leq \big(\Maxn^{p-1}(n\mu_n)^{p}\big)^{\frac{1}{r-1}}(1+\varepsilon_n)^{\frac{p}{r-1}}\\
&= \Maxn^{\frac{p-1}{r-1}}(n\mu_n)^{\frac{p}{r-1}}\Big(1+ \frac{p}{r-1}\varepsilon_n + O(\varepsilon_n^2)\Big).
\end{eq}
A similar computation yields the following lower bound: For any $i$,
\begin{eq}\label{lb-min-entry}
(S\vv_n)_{i} \geq \minn^{\frac{p-1}{r-1}}(n\mu_n)^{\frac{p}{r-1}}\Big(1- \frac{p}{r-1}\varepsilon_n + O(\varepsilon_n^2)\Big).
\end{eq}
\blue{Now, take any $i_0$ and $j_0$ such that $\minn= v_{n,i_0}$ and $\Maxn = v_{n,j_0}$. } 
Since by \eqref{identities-SW}, $\vv_n$ satisfies $S\vv_n \propto \vv_n$, we must have $\frac{(S\vv_n)_{i_0}}{\minn}=\frac{(S\vv_n)_{j_0}}{\Maxn}$, and consequently, \eqref{ub-max-entry} with $j = j_0$ and \eqref{lb-min-entry} with $i=i_0$ together imply that
\begin{eq}
\Maxn^{\frac{p-1}{r-1}-1}\Big(1+ \frac{p}{r-1}\varepsilon_n + O(\varepsilon_n^2)\Big) \geq \minn^{\frac{p-1}{r-1}-1}\Big(1- \frac{p}{r-1}\varepsilon_n + O(\varepsilon_n^2)\Big),
\end{eq}
which in turn implies
\[  \Maxn^{\frac{p-r}{r-1}} \geq \minn^{\frac{p-r}{r-1}}\Big(1- \frac{2p}{r-1}\varepsilon_n + O(\varepsilon_n^2)\Big).
\]
Thus, using the fact that $1 < p < r$, we have  \begin{eq} 
      \Big(\frac{\minn}{\Maxn}\Big)^{\frac{r-p}{r-1}} &\geq \Big(1- \frac{2p}{r-1}\varepsilon_n + O(\varepsilon_n^2)\Big)  \quad 
     \implies   \quad 
     \frac{\minn}{\Maxn} \geq \Big(1- \frac{2p}{r-p}\varepsilon_n + O(\varepsilon_n^2)\Big). 
\end{eq}
This completes the proof of \eqref{eq2:lb-minmax-ratio} with $C = 2p/(r-p)$, and hence Proposition~\ref{prop:vec-close}(a) follows. 
\qed 

\subsection{Maximizer for the case \texorpdfstring{$p=r$}{p=r}}
\label{subs-pfcaseb}
We now prove   Proposition~\ref{prop:vec-close}(b), which entails  establishing the bound in \eqref{vec-close-p-eq-r}  under both the almost-regularity and well-connected conditions  on $(A_n)_{n \in \N}$. 
The basic idea again is to show that if a vector $\vv_n$
satisfies $S\vv_n\propto \vv_n$, then the ratio
of its maximum and minimum must be converging to 1 as $n\to\infty$.
However, when $p=r$, one can see that the exponents 
of $M_n$ and $m_n$ in equations~\eqref{ub-max-entry} and~\eqref{lb-min-entry} become zero, and consequently the method used in Section~\ref{subs-pfcasea} fails.
The key insight to deal with this issue is to 
define two sets of vertices: one consisting of all
vertex indices $i$ such that $v_{n,i}$ is suitably large, and the other with $v_{n,i}$'s suitably small.
\blue{Due to the well-connectedness property, we can ensure 
that each vertex from one of these sets \emph{must} be connected to a certain
number of vertices from the other set in 2-hop paths.}
In that case, we show that if $M_n/m_n$ is not close to 1, then the ratio $(S\vv)_i/v_i$ will be very different
for the vertices for which $v_i$ is minimum and maximum, respectively.
This leads to a contradiction.

For any $r\in [2, \infty)$, $r^*\in (1,2]$ and further, by \cite[Lemma 8]{KMW18} and the symmetry of $A_n$, 
$A_n$, $\|A_n\|_{r\to r} = \|A_n^T\|_{r^*\to r^*}= \|A_n\|_{r^*\to r^*}$. 
Thus, to study the asymptotics of $\|A_n\|_{r\to r}$, it suffices to consider the case $r\in (1,2]$.
Let $n_0 \in \N$ be the maximum of the $n_0$ specified in the definitions of the almost-regularity and well-connected conditions and fix $n \geq n_0$. Also,
as in the proof of Proposition~\ref{prop:vec-close}(a), define $\minn$ and $\Maxn$ as in \eqref{minmax}.
Note that it suffices to show that for \blue{$\Delta_n:= (M_n-m_n)/2$, 
\begin{eq}\label{suff-to-show-p-eq-r}
  \frac{\Delta_n}{M_n} \leq \frac{5r \varepsilon_n}{C^*(r-1)},
\end{eq}
which is just a restatement of \eqref{eq2:lb-minmax-ratio}.
To this end, define 
$V_n := \{i: v_{n,i}\geq M_n-\Delta_n\}$,
and note that $M_n-\Delta_n = m_n+\Delta_n$.

In the rest of the proof, we will obtain upper and lower bounds on each  coordinate of $S\vv_n = \Psi_{r^*}(A_n^T\Psi_r(A_n\vv_n))$.   
Using the definition of $V_n$, we have for each $k \in [n]$, 
        \begin{eq}\label{eq:5-15}
             (A_n\vv_n)_k &\leq M_n\sum_{j\in V_n} \akj + (M_n-\Delta_n)\sum_{j\notin V_n} \akj = M_n\sum_{j\in [n]} \akj -\Delta_n\sum_{j\notin V_n} \akj,
             \\
             (A_n\vv_n)_k &\geq (m_n+\Delta_n)\sum_{j\in V_n} \akj  + m_n\sum_{j\notin V_n} \akj = m_n\sum_{j\in [n]} \akj+ \Delta_n\sum_{j\in V_n} \akj. 
        \end{eq}
    Take any $i_0$ and $j_0$ such that $\minn= v_{n,i_0}$ and $\Maxn = v_{n,j_0}$. 
    We will use the following elementary fact: For all $l\in (0,1]$ and $x\in [0,1]$,
    \begin{eq}\label{eq:elementary-fact}
        (1-x)^{l} \leq 1-\frac{lx}{2} \quad \text{and} \quad (1+x)^{l} \geq 1+\frac{lx}{2}.
    \end{eq}
    Then, by~\eqref{eq:5-15}, \eqref{eq:psi-def}, the fact that $r-1\in (0,1]$ and \eqref{eq:elementary-fact}, we have
    \begin{eq}\label{eq:local-5.17}
        \frac{(A_n^T\Psi_r(A_n\vv_n))_{j_0}}{M_n^{r-1}} &\leq \frac{1}{M_n^{r-1}} \sum_{k\in [n]} a_{kj_0}^n \bigg[M_n\sum_{j\in [n]} \akj -\Delta_n\sum_{j\notin V_n} \akj\bigg]^{r-1} \\
        & = \sum_{k\in [n]} a_{kj_0}^n \bigg(\sum_{j\in [n]} \akj\bigg)^{r-1} \bigg[1 -\frac{\Delta_n}{M_n}\frac{\sum_{j\notin V_n} \akj}{\sum_{j\in [n]} \akj}\bigg]^{r-1} \\ 
        & \leq \sum_{k\in [n]} a_{kj_0}^n \bigg(\sum_{j\in [n]} \akj\bigg)^{r-1} \bigg[1 -\frac{r-1}{2}\frac{\Delta_n}{M_n}\frac{\sum_{j\notin V_n} \akj}{\sum_{j\in [n]} \akj}\bigg].
    \end{eq}
    Also, since $A_n$ in $(C^*, \mu_n)$ well-connected, Definition~\ref{defn:two-hop} and the symmetry of $A_n$ imply
   \begin{eq}\label{eq:local5.18}
       \sum_{j\notin V_n}\sum_{k\in [n]} a_{kj_0}^n a_{kj}^n 
       \geq C^* n\mu_n^2 (n - |V_n|),
   \end{eq}
   and similarly,
   \begin{eq}\label{eq:local5.19}
       \sum_{j\in V_n}\sum_{k\in [n]} a_{ki_0}^n a_{kj}^n \geq C^* n\mu_n^2 |V_n|.
   \end{eq}
    Using the $(\varepsilon_n, \mu_n)_{n\in\N}$ almost regularity of $A_n$ and substituting  \eqref{eq:local5.18} in~\eqref{eq:local-5.17},
    we obtain
    \begin{eq}\label{balanced-perturb-1}
         \frac{(A_n^T\Psi_r(A_n\vv_n))_{j_0}}{M_n^{r-1}}
         &\leq (n\mu_n(1+\varepsilon_n))^{r-1}\sum_{k\in [n]} a_{kj_0}^n  \bigg[1 -\frac{r-1}{2}\frac{\Delta_n}{M_n}\frac{\sum_{j\notin V_n} \akj}{\sum_{j\in [n]} \akj}\bigg]\\
        &\leq (n\mu_n(1+\varepsilon_n))^{r} - \frac{r-1}{2} \frac{\Delta_n}{M_n} (n\mu_n(1+\varepsilon_n))^{r-2} \sum_{j\notin V_n} \sum_{k\in [n]} a_{kj_0}^n \akj\\
        & \leq 
        (n\mu_n(1+\varepsilon_n))^{r} - \frac{C^*(r-1)}{2} \frac{\Delta_n}{M_n} (n\mu_n(1+\varepsilon_n))^{r-2} n\mu_n^2(n-|V_n|).
    \end{eq}
Similarly, using almost regularity and \eqref{eq:local5.19} we obtain
    \begin{eq}\label{balanced-perturb-2}
        \frac{(A_n^T\Psi_r(A_n\vv_n))_{i_0}}{m_n^{r-1}} &\geq \sum_{k\in [n]} a_{ki_0}^n \bigg(\sum_{j\in [n]} \akj\bigg)^{r-1} \bigg[1 +\frac{r-1}{2}\frac{\Delta_n}{m_n}\frac{\sum_{j\in V_n} \akj}{\sum_{j\in [n]} \akj}\bigg]\\
        &\geq \sum_{k\in [n]} a_{ki_0}^n \bigg(\sum_{j\in [n]} \akj\bigg)^{r-1} \bigg[1 +\frac{r-1}{2}\frac{\Delta_n}{M_n}\frac{\sum_{j\in V_n} \akj}{\sum_{j\in [n]} \akj}\bigg]\\
        & \geq 
        (n\mu_n(1-\varepsilon_n))^{r} + \frac{C^*(r-1)}{2} \frac{\Delta_n}{M_n} (n\mu_n)^{r-2}\frac{(1-\varepsilon_n)^{r-1}}{1+\varepsilon_n} n\mu_n^2|V_n|.
    \end{eq}
   Since $\vv_n$ satisfies $S\vv_n \propto \vv_n$, we must have $\frac{(A_n^T\Psi_r(A_n\vv_n))_{j_0}}{M_n^{r-1}} = \frac{(A_n^T\Psi_r(A_n\vv_n))_{i_0}}{m_n^{r-1}}$. 
   Thus, combining  \eqref{balanced-perturb-1} and \eqref{balanced-perturb-2}, we get for large enough $n$,
   \begin{eq}\label{eq:local-5.22}
       \frac{C^*(r-1)}{2} \frac{\Delta_n}{M_n} (n\mu_n)^{r-2}n\mu_n^2\Big[(1+\varepsilon_n)^{r-2}(n-|V_n|) 
       &+ \frac{(1-\varepsilon_n)^{r-1}}{1+\varepsilon_n} |V_n| \Big]\\
       &
       \leq (n\mu_n)^r\Big[(1+\varepsilon_n)^r - (1-\varepsilon_n)^r\Big]
   \end{eq}
   Next, using $\varepsilon_n \to 0$, \eqref{eq:elementary-fact} and the fact that $r\in (1, 2]$, we can lower bound the left-hand-side of \eqref{eq:local-5.22} as follows:
   \begin{eq}\label{eq:local-5.23}
       &\frac{C^*(r-1)}{2} \frac{\Delta_n}{M_n} (n\mu_n)^{r-2}n\mu_n^2\Big[(1+\varepsilon_n)^{r-2}(n-|V_n|) 
       + \frac{(1-\varepsilon_n)^{r-1}}{1+\varepsilon_n} |V_n| \Big]\\
       & \geq 
       \frac{C^*(r-1)}{2} \frac{\Delta_n}{M_n} (n\mu_n)^{r-2}n\mu_n^2\Big[
       n-|V_n| + (1 - 2r\varepsilon_n)|V_n| \Big]\\
       & = \frac{C^*(r-1)}{2} \frac{\Delta_n}{M_n} (n\mu_n)^{r-2}n\mu_n^2\big[
       n - 2rn\varepsilon_n \big]\\
       &\geq 
       \frac{C^*(r-1)}{2} \frac{\Delta_n}{M_n} (n\mu_n)^{r}\Big[
       1  - 2r\varepsilon_n\Big].
   \end{eq}
   Therefore, using \eqref{eq:local-5.23} and Definition~\ref{defn:two-hop} in \eqref{eq:local-5.22} shows that for large enough $n$,
   \begin{align*}
       \frac{\Delta_n}{M_n} \leq \frac{2}{C^*(r-1)(1- 2r\varepsilon_n)}(2r\varepsilon_n + o(\varepsilon_n))\leq \frac{5r \varepsilon_n}{C^*(r-1)}.
   \end{align*}
This proves  \eqref{suff-to-show-p-eq-r}, 
and hence, completes the proof of Proposition~\ref{prop:vec-close}~(b). \qed }

\section{Approximation of the maximizer for random matrices} 
\label{subs-verification}
In this section, we show that the assumptions in Proposition~\ref{prop:vec-close} are satisfied almost surely by the sequence of random matrices of interest. 
This will complete the proofs of Theorems~\ref{thm:maximizer-vector} and~\ref{thm:maximizer-vector-inhom}.
Let $\mathbb{P}_0$ be any probability
measure on $\prod_{n} \R^{n \times n}$, such that its
projection on $\R^{n\times n}$ has the same  law as $A_n$, as defined in Assumption~\ref{assumption-1}.

\subsection{Random matrices are almost regular and well-connected}

In Lemmas~\ref{lem-verification} and~\ref{lem-verification-inhom}, we verify
the almost regularity and well-connectedness conditions for the homogeneous and inhomogeneous instances of  the random
matrix sequences, respectively.

\begin{lemma}
\label{lem-verification}
Let $(A_n)_{n \in \N}$ be a sequence of  random matrices satisfying  Assumptions \ref{assumption-1}~\eqref{assumption1-i},~\eqref{assumption1-iv}.
Also, suppose that 
\blue{\begin{eq}\label{eq:eps-choice}
    \varepsilon_n =3\bigg(\frac{\log n}{n\mu_n}\times \frac{\sigma_n^2}{\mu_n}\bigg)^{1/2}. 
\end{eq}}
\begin{enumerate}
    \item \blue{Suppose that  $\sigma_n^2 \geq \frac{9c^2\log n}{2n}$, where $c$ is as in Assumption \ref{assumption-1}~\eqref{assumption1-iv}.} Then $(A_n)_{n \in \N}$ is $(\ve_n, \mu_n)_{n\in \N}$ almost regular, $\mathbb{P}_0$-almost surely. 
    \item \blue{If Assumption~\ref{assumption-1}~\eqref{assumption1-ii} is satisfied, then for any constant $C^*\in (0,1)$, $(A_n)_{n \in \N}$ is also $(C^*,  \mu_n)_{n\in \N}$ well-connected, $\mathbb{P}_0$-almost surely.}
\end{enumerate}
\end{lemma} 

\begin{proof} 
\emph{Verification of almost regularity.}
First, note that $\sum_{j\in [n]\setminus \{i\}}\E[(\aij - \mu_n)^2] \leq n\sigma_n^2$
and Assumption~\ref{assumption-1}~\eqref{assumption1-iv} provides the moment conditions required for Bernstein's inequality (see~\cite[Corollary 2.11]{BLM13}). 
 Therefore,    
 using the fact that $(\aij)_{i<j}$ are i.i.d.~as well as the union bound,  and then applying   \cite[Corollary 2.11]{BLM13} for both the upper and lower tails, we conclude that  for all sufficiently large $n$, 
\begin{eq}\label{eq:deg-sum}
\PR\big(\exists\ i: \big|\degreei-n\mu_n\big|>n\mu_n\varepsilon_n\big) 
&\leq n \PR\big( \big|\degreeone  -n\mu_n\big|>n\mu_n\varepsilon_n\big) \\
&\leq 2 n \exp\Big(- \frac{n^2\mu_n^2\varepsilon_n^2}{2(n\sigma_n^2+cn\mu_n\varepsilon_n)}\Big),
\end{eq}
\blue{where $c$ is as given in Assumption~\ref{assumption-1}~\eqref{assumption1-iv}. 
Since 
$$c n\mu_n \varepsilon_n = 3c \bigg(n^2\mu_n^2\times\frac{\log n}{n\mu_n}\times \frac{\sigma_n^2}{\mu_n}\bigg)^{1/2}
= 3c\sigma_n \sqrt{n\log n}\leq \frac{n\sigma_n^2}{2},$$
and
$\frac{n^2\mu_n^2\varepsilon_n^2}{3n\sigma_n^2}= 3\log n$,
this implies 
$$\PR\big(\exists\ i: \big|\degreei-n\mu_n\big|>n\mu_n\varepsilon_n\big) 
\leq \exp\big(- 3\log n + \log n \big) = n^{-2},$$
which is summable in $n$.
Thus the almost regularity holds $\PR_0$-almost surely due to the Borel-Cantelli lemma.\\

\noindent 
\emph{Verification of well-connectedness.}
Note that it suffices to prove the following claim.
\begin{claim}
\label{lem:2-hop-random}
Define the sequence $\big(\varepsilon_n'\big)_{n\geq 1}$ as
\begin{equation}\label{eq:eps'-def}
   \varepsilon_n':= \frac{\sigma_n^2}{\mu_n^3} \frac{(\log n)^2}{n}.
\end{equation}
Then 
for all $i, j\in [n]$, $\big|\sum_{k}a_{ik}a_{kj} - n\mu_n^2\big|\leq n\mu_n^2\sqrt{\varepsilon_n'}$, $\PR_0$-almost surely.
\end{claim}

\begin{claimproof}
First, note that $\varepsilon_n'\to 0$ as $n\to\infty$ since $\frac{\sigma_n^2}{\mu_n} = O(1)$, $\sqrt{n} \mu_n = \omega(\log n)$ by Assumption~\ref{assumption-1}~\eqref{assumption1-ii}.
Next, for each fixed $i, j\in [n]$, note that
\begin{align*}
    \expt{\sum_{k}a_{ik}a_{kj}} &= (n-2)\mu_n^2\qquad
    \expt{\sum_{k}a_{ik}^2a_{kj}^2}=
    (n-2)(\sigma_n^2 +\mu_n^2)^2.
\end{align*}
By \cite[Corollary 2.11]{BLM13}, under Assumption~\ref{assumption-1}, we have for all large enough $n$,
\begin{align*}
    \PR\Big(\big|\sum_{k}a_{ik}a_{kj} - n\mu_n^2\big|>n\mu_n^2\sqrt{\varepsilon_n'}\Big)\leq 2 \exp\Big[- \frac{n^2\mu_n^4 \varepsilon_n'}{2(n(\sigma_n^2 +\mu_n^2)^2 + c''n\mu_n^2\sqrt{\varepsilon_n'})}\Big],
\end{align*}
where $c''$ is a constant that depends only on the constant $c$ in Assumption~\ref{assumption-1}~\eqref{assumption1-iv}.
The proof of the claim is completed by observing that since $\varepsilon_n'\to 0$ as $n\to\infty$, and $\mu_n$ and $\sigma_n^2/\mu_n$ are upper bounded by some fixed finite positive constant $K$, we have for all large enough $n$,
\begin{align*}
    \frac{n^2\mu_n^4 \varepsilon_n'}{2(n(\sigma_n^2 +\mu_n^2)^2 + c''n\mu_n^2\sqrt{\varepsilon_n'})}
    \geq 
    \frac{n^2\mu_n^4}{2n(K\mu_n +\mu_n^2)^2} \frac{\sigma_n^2}{\mu_n^3} \frac{(\log n)^2}{n}
    \geq \frac{(\log n)^2\sigma_n^2}{8K^2\mu_n}.
\end{align*}
\end{claimproof}
This completes the verification of $\PR_0$-almost sure well-connectedness.}
\end{proof}

The next lemma states the version of Lemma~\ref{lem-verification} in the inhomogeneous variance case.

\begin{lemma}
\label{lem-verification-inhom}
Let $(A_n)_{n \in \N}$ be a sequence of  random matrices 
that satisfies {Assumption \ref{assumption-inhom}}.
Also, suppose that 
\blue{$\varepsilon_n =3\big(\frac{\log n}{n\mu_n}\times \frac{\bar{\sigma}_n^2}{\mu_n}\big)^{1/2}.$
Then $(A_n)_{n \in \N}$ is $(\ve_n, \mu_n)_{n\in \N}$ almost regular, $\mathbb{P}_0$-almost surely. 
Moreover, for any constant $C^*\in (0,1)$, it is also $(C^*,  \mu_n)_{n\in\N}$ well-connected $\mathbb{P}_0$-almost surely. }
\end{lemma} 
\begin{proof}[Proof of Lemma~\ref{lem-verification-inhom}]
The proof follows verbatim the
proof of Lemma~\ref{lem-verification} once $\sigma_n$ is replaced by $\bar{\sigma}_n$.
\end{proof}

\begin{proof}[Proofs of Theorems~\ref{thm:maximizer-vector} and~\ref{thm:maximizer-vector-inhom}]
Note that Claim~\ref{lem:2-hop-random} also implies that $A_n^TA_n$ is irreducible.
Thus,
Theorems~\ref{thm:maximizer-vector} and~\ref{thm:maximizer-vector-inhom} are immediate from Proposition~\ref{prop:vec-close}, and Lemmas~\ref{lem-verification} and \ref{lem-verification-inhom}, respectively.
\end{proof}

\section{Approximating the \texorpdfstring{$r\to p$}{r-to-p} norm}\label{sec:power-method} 
The purpose of this section is to  identify a good approximation for $\|A_n\|_{r\to p}$ that is sufficiently explicit.
We use the power iteration method described in  Section~\ref{sec:prelim} starting  with initial  vector $\vv_n^{(0)} = n^{-1/r}\1$. 
Then after one iteration, we get the vector $\vv_n^{(1)}$ which, by 
\eqref{eq:iteration-algo} and \eqref{eq:boyditeration}, is given explicitly by 
\begin{equation}\label{eq:v1-def}
    \vv_n^{(1)} = \frac{\Psi_{r^*}(A_n^T\Psi_p(A_n\1))}{\|\Psi_{r^*}(A_n^T\Psi_p(A_n\1))\|_r}.
\end{equation}
Then define the quantity
\begin{eq}
\label{def-nun}
        \eta_n(A_n) 
         := \| A_n  \vv_n^{(1)}\|_p
        =   \frac{\|A_n\Psi_{r^*}(A_n^T\Psi_p(A_n\1))\|_{p}}{\|\Psi_{r^*}(A_n^T\Psi_p(A_n\1))\|_r},
    \end{eq}
which will serve as an approximation for $\|A_n\|_{r\to p}$.
We prove the following estimate: 
\begin{proposition} \label{prop:norm-approximation}
Let $(A_n)_{n\in\N}$, $(\varepsilon_n)_{n\in\N}$, and $(\mu_n)_{n\in\N}$ satisfy the same conditions as those imposed in Proposition~\ref{prop:vec-close}.
Then there exists a constant $C \in (0,\infty)$ (possibly depending on $p$ and $r$)   such that for all sufficiently large $n$,
$$\big|\|A_n\vv_n \|_{p} - \eta_n(A_n)\big|\leq C\frac{\Lambda_2^2 (n)\varepsilon_n}{\mu_n^2 n^{\frac{3}{2}+\frac{1}{r}}}\|A_n\|_{2\to p},$$
where $\eta_n$ is defined as in \eqref{def-nun} and
\begin{equation}
    \label{def-Lambdatwo}
    \Lambda_2^2 (n) :=\max_{\xx:\langle \1, \xx\rangle=0,\ \xx\neq \boldsymbol{0}} \frac{\|A_n\xx\|_2^2}{\|\xx\|_2^2}.
\end{equation}
\end{proposition}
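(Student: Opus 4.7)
The plan is to identify $\eta_n(A_n)$ with $\|A_n\vv_n^{(1)}\|_p$, where $\vv_n^{(1)}$ is the output of one step of Boyd's nonlinear power iteration \eqref{eq:boyditeration} applied to $A_n$ starting from the uniform vector $\vv_n^{(0)} := n^{-1/r}\1$, and to bound
\[
\bigl|\|A_n \vv_n\|_p - \|A_n\vv_n^{(1)}\|_p\bigr| \;\leq\; \|A_n(\vv_n - \vv_n^{(1)})\|_p \;\leq\; \|A_n\|_{2\to p}\,\|\vv_n - \vv_n^{(1)}\|_2.
\]
The crucial factor $\Lambda_2^2(n)/(\mu_n^2 n^2)$ on the right-hand side will arise from the one-step contraction rate $(p-1)\lambda_2/((r-1)\gamma^p)$ provided by Proposition~\ref{th:boydmain}, after expressing $\lambda_2$ in terms of $\Lambda_2^2(n)$ and converting the resulting $\vv_n$-norm bound back to $\ell_2$.

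First I would invoke Proposition~\ref{prop:vec-close} to conclude that $\|\vv_n^{(0)} - \vv_n\|_\infty \leq C n^{-1/r}\varepsilon_n$; in particular $v_{n,i} = n^{-1/r}(1+o(1))$ uniformly in $i$, which makes the $\vv_n$-norm comparable to a scaled $\ell_2$ norm. Using $v_{n,i}^{r-2} \asymp n^{-(r-2)/r}$,
\[
\|\vv_n^{(0)} - \vv_n\|_{\vv_n}^2 \;=\; \sum_{i=1}^n v_{n,i}^{r-2}\bigl(v_{n,i}^{(0)} - v_{n,i}\bigr)^2 \;\leq\; C\,\varepsilon_n^2,
\]
and since $\|\vv_n^{(0)} - \vv_n\|_\infty = o(1)$, applying Proposition~\ref{th:boydmain} to $\vv_n^{(0)}$ yields
\[
\|\vv_n^{(1)} - \vv_n\|_{\vv_n} \;\leq\; C\,\frac{\lambda_2}{\gamma^p}\,\varepsilon_n.
\]

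The key technical step, which I expect to be the main obstacle, is to establish $\lambda_2/\gamma^p \leq C\,\Lambda_2^2(n)/(\mu_n^2 n^2)$. From the Rayleigh-quotient formula \eqref{Rayleigh-coefficient-B}, the almost regularity of $A_n$ together with $\vv_n \approx n^{-1/r}\1$ gives $(A_n\vv_n)_j = n^{1-1/r}\mu_n(1+o(1))$ and $v_{n,i}^{r-2} = n^{-(r-2)/r}(1+o(1))$ uniformly; substituting these, the Rayleigh quotient defining $\lambda_2$ equals $(1+o(1))\,\mu_n^{p-2} n^{p-1-p/r}\,\|A_n\hh\|_2^2/\|\hh\|_2^2$. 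The constraint $[\hh,\vv_n]=\sum_i v_{n,i}^{r-1}h_i = 0$ differs from $\langle \hh,\1\rangle = 0$ only by a small perturbation; writing $\hh = c\1 + \hh^\perp$ with $\hh^\perp \perp \1$ and controlling $|c|$ against $\|\hh^\perp\|_2$ through the constraint allows one to compare the supremum over $[\hh,\vv_n]=0$ with $\Lambda_2^2(n) = \sup_{\hh\perp\1}\|A_n\hh\|_2^2/\|\hh\|_2^2$, giving $\lambda_2 \leq C\mu_n^{p-2} n^{p-1-p/r}\Lambda_2^2(n)$. On the other hand the almost regularity yields $\gamma^p = \|A_n\vv_n\|_p^p \asymp \mu_n^p n^{p+1-p/r}$, and the announced estimate on $\lambda_2/\gamma^p$ follows upon dividing. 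The delicate part is the constraint-change argument: a naive bound on $|c|$ could produce a loss that swamps the $\Lambda_2^2$ term, so one must exploit that $A_n\1$ is itself nearly proportional to $\1$ to ensure that the contribution of the component $c\,A_n\1$ to $\|A_n\hh\|_2^2$ is properly controlled.

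Finally, the transfer from $\vv_n$-norm back to $\ell_2$ is immediate from $v_{n,i}^{r-2} \gtrsim n^{-(r-2)/r}$:
\[
\|\vv_n^{(1)} - \vv_n\|_2 \;\leq\; C\,n^{1/2-1/r}\,\|\vv_n^{(1)} - \vv_n\|_{\vv_n} \;\leq\; C\,\frac{\Lambda_2^2(n)\,\varepsilon_n}{\mu_n^2\, n^{3/2+1/r}},
\]
and substituting into the first display completes the proof.
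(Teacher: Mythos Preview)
Your overall strategy matches the paper's: identify $\eta_n(A_n)=\|A_n\vv_n^{(1)}\|_p$, bound $|\|A_n\vv_n\|_p-\|A_n\vv_n^{(1)}\|_p|$ by $\|A_n\|_{2\to p}\,\|\vv_n-\vv_n^{(1)}\|_2$, apply Proposition~\ref{th:boydmain} to get a factor $\lambda_2/\gamma^p$, and convert between the $\vv_n$-norm and $\ell_2$ using $v_{n,i}\sim n^{-1/r}$. All of those steps are correct and essentially identical to the paper's.

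The one genuine gap is the bound $\lambda_2\le C\mu_n^{p-2}n^{p-1-p/r}\Lambda_2^2(n)$. Your constraint-change argument gives $|c|\le C\varepsilon_n n^{-1/2}\|\hh^\perp\|_2$, but then $\|A_n(c\1)\|_2\le C\varepsilon_n n\mu_n\|\hh^\perp\|_2$, and nothing in the hypotheses forces $\varepsilon_n n\mu_n\le C\Lambda_2(n)$; in the random applications one in fact has $\varepsilon_n n\mu_n\asymp\sqrt{n\mu_n\log n}\gg\sqrt{n}\sigma_n\asymp\Lambda_2(n)$. The refinement ``exploit that $A_n\1$ is nearly proportional to $\1$'' does not help: the dominant contribution $c\bar d\,\1$ to $A_n\hh$ sits squarely in $\|A_n\hh\|_2$ regardless of how close $A_n\1$ is to a multiple of $\1$. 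So your route yields $(\Lambda_2(n)+\varepsilon_n n\mu_n)^2$ rather than $\Lambda_2^2(n)$, which does not prove the proposition as stated. The paper sidesteps this with a one-line Courant--Fischer trick: since $\lambda_2=\min_{\uu}\max_{[\uu,\xx]=0}[B\xx,\xx]/[\xx,\xx]$, choosing $\uu=|\vv_n|^{2-r}$ makes $[\uu,\xx]=\langle|\vv_n|^{r-2}\oast|\vv_n|^{2-r},\xx\rangle=\langle\1,\xx\rangle$, so the constraint is \emph{exactly} $\xx\perp\1$ with no perturbation to handle, and the bound in terms of $\Lambda_2^2(n)$ follows directly.
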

The rest of this section is organized as follows. First, we estimate the closeness of $\vv_n^{(1)}$ to $\vv_n$ in Proposition~\ref{prop:approx-vector}.
In particular, we show that under the assumptions of  Proposition~\ref{prop:norm-approximation} (equivalently, Proposition~\ref{prop:vec-close}), 
$\vv_n$ can be approximated well by $\vv_n^{(1)}$.
This is then used to approximate the operator norm and complete the proof of Proposition~\ref{prop:norm-approximation}.

\begin{proposition}\label{prop:approx-vector}
Assume that the conditions of  Proposition~\ref{prop:vec-close} are satisfied.
Recall the definition of the $\vv$-norm from \eqref{eq:vnorm-def}.
Then there exists a constant $ C_2 < \infty$, possibly depending on $p, r$, such that for all sufficiently large $n$,
$$
\|\vv_n - \vv_n^{(1)}\|_{\vv_n}\leq C_2\frac{\Lambda_2^2(n)\varepsilon_n}{n^2\mu_n^2}, 
$$
where $\Lambda_2(n)$ is as defined in \eqref{def-Lambdatwo}. 
\end{proposition}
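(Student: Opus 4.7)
The plan is to invoke Proposition~\ref{th:boydmain} with $k=0$ and initial vector $\vv_n^{(0)} = n^{-1/r}\1$, which gives
\begin{equation}\label{eq:invoke-boyd}
    \|\vv_n^{(1)} - \vv_n\|_{\vv_n} \leq (1+o(1))\frac{(p-1)\lambda_2}{(r-1)\newsigma^p} \|\vv_n^{(0)} - \vv_n\|_{\vv_n}.
\end{equation}
This requires $A_n^TA_n$ to be irreducible and $\|\vv_n^{(0)} - \vv_n\|_\infty$ to be below the threshold $\varepsilon_0$ of Proposition~\ref{th:boydmain}; both follow from the hypotheses of Proposition~\ref{prop:vec-close}, which gives $\|\vv_n^{(0)} - \vv_n\|_\infty = O(n^{-1/r}\varepsilon_n) = o(1)$. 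With \eqref{eq:invoke-boyd} in hand, it then suffices to establish two estimates: an initial-error bound $\|\vv_n^{(0)} - \vv_n\|_{\vv_n} = O(\varepsilon_n)$, and a contraction bound $\lambda_2/\newsigma^p = O(\Lambda_2^2(n)/(n^2 \mu_n^2))$.

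The initial error is straightforward. The entrywise bound on $\vv_n - n^{-1/r}\1$ from Proposition~\ref{prop:vec-close} forces each coordinate of $\vv_n$ to lie in $[(1-o(1))n^{-1/r}, (1+o(1))n^{-1/r}]$, so $\max_i v_{n,i}^{r-2}$ (or $\min_i v_{n,i}^{r-2}$, depending on the sign of $r-2$) is comparable to $n^{-(r-2)/r}$ up to $(1+o(1))$ factors. Plugging this into $\|\xx\|_{\vv_n}^2 = \langle |\vv_n|^{r-2}, |\xx|^2\rangle$ with $\xx = \vv_n^{(0)} - \vv_n$ and using $\|\vv_n^{(0)} - \vv_n\|_\infty = O(n^{-1/r}\varepsilon_n)$ yields $\|\vv_n^{(0)} - \vv_n\|_{\vv_n}^2 = O(n \cdot n^{-(r-2)/r} \cdot n^{-2/r}\varepsilon_n^2) = O(\varepsilon_n^2)$.

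For the contraction factor I would estimate $\newsigma_n$ and $\lambda_2$ separately. Using almost-regularity together with the closeness of $\vv_n$ to $n^{-1/r}\1$, the entries of $A_n\vv_n$ satisfy $(A_n\vv_n)_i = n^{1-1/r}\mu_n(1 + O(\varepsilon_n))$ uniformly in $i$, so $\newsigma_n^p = \|A_n\vv_n\|_p^p = (1+o(1))n^{p+1-p/r}\mu_n^p$. For $\lambda_2$, the Rayleigh characterization \eqref{Rayleigh-coefficient-B} combined with the approximations $|A_n\vv_n|^{p-2} \approx (n^{1-1/r}\mu_n)^{p-2}\1$ and $|\vv_n|^{r-2} \approx n^{-(r-2)/r}\1$ gives
\begin{equation}
    \lambda_2 \leq (1+o(1))\frac{(n^{1-1/r}\mu_n)^{p-2}}{n^{-(r-2)/r}} \sup_{\xx \neq \bzero:\ [\xx,\vv_n]=0}\frac{\|A_n\xx\|_2^2}{\|\xx\|_2^2}.
\end{equation}
A short exponent calculation shows $(1-1/r)(p-2) + (r-2)/r - (p+1-p/r) = -2$, matching $\lambda_2/\newsigma_n^p = O(\Lambda_2^2(n)/(n^2\mu_n^2))$ provided the constrained supremum is bounded by a constant multiple of $\Lambda_2^2(n)$. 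Substituting back into \eqref{eq:invoke-boyd} then delivers the claimed bound.

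The main obstacle is the last step, namely replacing the constraint $[\xx,\vv_n] = \langle \vv_n^{r-1}, \xx\rangle = 0$ in the supremum by the constraint $\langle \1,\xx\rangle = 0$ that defines $\Lambda_2^2(n)$. Since $\vv_n^{r-1}$ is entrywise within a factor $1+O(\varepsilon_n)$ of $n^{-(r-1)/r}\1$, the two hyperplanes are close: decomposing an admissible $\xx$ as $\xx_0 + c\1$ with $\langle \xx_0, \1\rangle = 0$, the constraint $[\xx,\vv_n]=0$ combined with a Cauchy--Schwarz estimate on $\langle \vv_n^{r-1} - n^{-(r-1)/r}\1,\xx_0\rangle$ forces $|c|\sqrt{n} = O(\varepsilon_n\|\xx_0\|_2)$, after which one bounds $\|A_n\xx\|_2$ by $\Lambda_2(n)\|\xx_0\|_2$ plus a correction of order $|c|\|A_n\1\|_2$. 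Care is needed to ensure this correction does not inflate the final bound beyond the claimed order, which ultimately relies on almost-regularity to control $\|A_n\1\|_2 \leq (1+o(1))n^{3/2}\mu_n$.
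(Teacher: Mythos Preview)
Your overall strategy coincides with the paper's: invoke Proposition~\ref{th:boydmain} with $\vv_n^{(0)}=n^{-1/r}\1$, bound the initial error $\|\vv_n^{(0)}-\vv_n\|_{\vv_n}=O(\varepsilon_n)$ via Proposition~\ref{prop:vec-close}, and bound the contraction factor $\lambda_2/\newsigma^p$ using $\Lambda_2(n)$. The initial-error and $\newsigma^p$ estimates are fine and match Lemma~\ref{lem:ingredients}(a),(b).

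The gap is in your bound for $\lambda_2$. Your perturbative argument to pass from the constraint $[\xx,\vv_n]=0$ to $\langle \1,\xx\rangle=0$ yields, after the decomposition $\xx=\xx_0+c\1$ and the almost-regularity bound on $\|A_n\1\|_2$, a correction $|c|\,\|A_n\1\|_2=O(\varepsilon_n n\mu_n)\|\xx_0\|_2$. This gives only
\[
\sup_{[\xx,\vv_n]=0,\ \xx\neq \bzero}\frac{\|A_n\xx\|_2^2}{\|\xx\|_2^2}\leq \bigl(\Lambda_2(n)+O(\varepsilon_n n\mu_n)\bigr)^2,
\]
and hence
\[
\|\vv_n-\vv_n^{(1)}\|_{\vv_n}=O\!\left(\frac{(\Lambda_2(n)+\varepsilon_n n\mu_n)^2\,\varepsilon_n}{n^2\mu_n^2}\right).
\]
The extra cross-term and $\varepsilon_n^3$ term are absorbed into $C_2\Lambda_2^2(n)\varepsilon_n/(n^2\mu_n^2)$ only when $\varepsilon_n n\mu_n=O(\Lambda_2(n))$, which is \emph{not} assumed in the deterministic setting of the proposition (e.g.\ for $A_n$ close to $\mu_n(J_n-I_n)$ one has $\Lambda_2(n)\approx \mu_n$, while $\varepsilon_n n\mu_n$ can be arbitrarily larger). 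So as written your argument does not deliver the stated bound in full generality.

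The paper sidesteps this entirely via the Courant--Fischer min--max for the inner product $[\cdot,\cdot]$: since
\[
\lambda_2=\min_{\uu\neq 0}\max_{\xx:\,[\uu,\xx]=0}\frac{[B\xx,\xx]}{[\xx,\xx]},
\]
one may choose $\uu=|\vv_n|^{2-r}$, for which $[\uu,\xx]=\langle |\vv_n|^{r-2}\oast|\vv_n|^{2-r},\xx\rangle=\langle \1,\xx\rangle$. The constraint becomes exactly $\langle \1,\xx\rangle=0$ with no perturbation, giving directly $\lambda_2\leq 2\mu_n^{p-2}n^{p(r-1)/r-1}\Lambda_2^2(n)$ (Lemma~\ref{lem:ingredients}(c)). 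Replacing your last paragraph with this choice of $\uu$ closes the gap.
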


The next lemma provides key ingredients for the proof of Proposition~\ref{prop:approx-vector}.

\begin{lemma} \label{lem:ingredients}
Assume that $(A_n)_{n \in \N}$ satisfies the conditions of {\rm Proposition~\ref{prop:vec-close}} and $1 < p \leq r < \infty.$ 
Then the following hold: 
    \begin{enumerate}[{\normalfont (a)}]
        \item \label{fact:sigma-order}
    $\lim_{n\to\infty} \mu_n^{-1} n^{-(1+\frac{1}{p}-\frac{1}{r})}\opera{A_n} = 1$; 
        \item \label{lem:max}
$\max_{\xx: \|\xx\|_{\vv_n}\leq 1}\|A_n\xx\|_p = (1+o(1)) n^{\frac{1}{2}- \frac{1}{r}}\max_{\xx: \|\xx\|_{2}\leq 1}\|A_n\xx\|_p; $
        \item  
        Let $\lambda_2(n)$ be the second largest eigenvalue corresponding to the $\vv$-Rayleigh quotient defined in~\eqref{Rayleigh-coefficient-B}. 
        Then 
        \label{lem:lambda2-bound} $$\lambda_2(n)\leq2\mu_n^{p-2}n^{\frac{p(r-1)}{r} - 1} \Lambda_2^2(n).$$
    \end{enumerate}
\end{lemma}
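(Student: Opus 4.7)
\textbf{Proof plan for Lemma \ref{lem:ingredients}.} All three parts will rely on the $\ell_\infty$ bound $\|\vv_n - n^{-1/r}\1\|_\infty \leq C_{p,r}\, n^{-1/r}(\varepsilon_n + o(\varepsilon_n))$ established in Proposition~\ref{prop:vec-close}, combined with the almost-regularity $d_n(i) = n\mu_n(1+O(\varepsilon_n))$ uniformly in $i\in[n]$. Setting $\eta_n := \|\vv_n - n^{-1/r}\1\|_\infty \cdot n^{1/r} = o(1)$, continuity of $t\mapsto t^\alpha$ on the relevant range yields the uniform estimates $v_{n,i}^\alpha = n^{-\alpha/r}(1+O(\eta_n))$ for any fixed $\alpha$.

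For part~(a), note that $\opera{A_n} = \|A_n \vv_n\|_p$ since $\vv_n$ maximizes and $\|\vv_n\|_r = 1$. Using nonnegativity of $A_n$ and the sandwich $\minn \1 \leq \vv_n \leq \Maxn \1$ with $\minn, \Maxn = n^{-1/r}(1+o(1))$, I bound $(A_n\vv_n)_i$ between $\minn d_n(i)$ and $\Maxn d_n(i)$. Almost-regularity collapses both bounds to $n^{-1/r}\cdot n\mu_n \cdot (1+o(1))$, giving $\|A_n \vv_n\|_p = n^{1/p}\cdot n^{-1/r}\cdot n\mu_n\cdot (1+o(1)) = \mu_n\, n^{1+1/p-1/r}(1+o(1))$, which proves (a).

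For part~(b), the uniform approximation $|v_{n,i}|^{r-2} = n^{-(r-2)/r}(1+O(\eta_n))$ immediately yields $\|\xx\|_{\vv_n}^2 = n^{-(r-2)/r}(1+o(1))\|\xx\|_2^2$ uniformly in $\xx$, equivalently $\|\xx\|_2 = n^{1/2-1/r}(1+o(1))\|\xx\|_{\vv_n}$. Thus the constraint $\|\xx\|_{\vv_n}\leq 1$ rescales to $\|\xx\|_2 \leq n^{1/2-1/r}(1+o(1))$, and positive homogeneity of $\|A_n\cdot\|_p$ gives (b).

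Part~(c) is the main obstacle. Using the same uniform $\ell_\infty$ estimates (and distinguishing the cases $p\geq 2$ and $p<2$ for the extremal estimate of $|A_n\vv_n|^{p-2}$, similarly for $|\vv_n|^{r-2}$), the numerator and denominator of the $\vv_n$-Rayleigh quotient in \eqref{Rayleigh-coefficient-B} satisfy
\begin{equation*}
\langle |A_n\vv_n|^{p-2}, |A_n\xx|^2\rangle \leq (1+o(1))(n^{1-1/r}\mu_n)^{p-2}\|A_n\xx\|_2^2, \quad \langle |\vv_n|^{r-2}, |\xx|^2\rangle \geq (1-o(1)) n^{-(r-2)/r}\|\xx\|_2^2.
\end{equation*}
Their ratio contributes the prefactor $\mu_n^{p-2}n^{p(r-1)/r-1}$, since the exponents combine as $(1-1/r)(p-2)+(r-2)/r = p(r-1)/r - 1$. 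It then remains to show that $\max_{\xx\neq\bzero,[\vv_n,\xx]=0} \|A_n\xx\|_2^2/\|\xx\|_2^2 \leq (1+o(1))\Lambda_2^2(n)$. For this I would decompose $\xx = c\1 + \yy$ with $\langle \1,\yy\rangle = 0$; because $\langle \vv_n^{r-1},\xx\rangle = 0$ and $\vv_n^{r-1}$ is componentwise within $O(\eta_n\, n^{-(r-1)/r})$ of $n^{-(r-1)/r}\1$, the orthogonality forces $|c| = O(\eta_n\, n^{-1/2})\|\yy\|_2$. The dominant contribution $\|A_n\yy\|_2^2$ is controlled by $\Lambda_2^2(n)\|\yy\|_2^2$ by definition of $\Lambda_2(n)$, and the $c^2\|A_n\1\|_2^2$ and $c\langle A_n\1, A_n\yy\rangle$ pieces are handled using $\|A_n\1 - n\mu_n\1\|_\infty \leq n\mu_n\varepsilon_n$ from almost-regularity. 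The delicate point, and the real content of the argument, is to keep these error terms lower order than $\Lambda_2^2(n)$ through a careful quantitative bookkeeping between $\varepsilon_n$, $n\mu_n$, and $\Lambda_2(n)$; this is what ultimately permits the clean absorption into the constant $2$ (in place of the natural $1+o(1)$) for all $n$ large enough.
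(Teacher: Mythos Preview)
Your arguments for parts (a) and (b) are essentially identical to the paper's, and correct.

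For part (c), however, there is a real gap. Your plan is to use the characterization $\lambda_2(n)=\max_{[\vv_n,\xx]=0}[B\xx,\xx]/[\xx,\xx]$, bound the numerator and denominator, and then transfer the constraint $\langle \vv_n^{r-1},\xx\rangle=0$ to $\langle \1,\xx\rangle=0$ via the decomposition $\xx=c\1+\yy$. The problem is the size of the error terms this produces. With $|c|=O(\eta_n n^{-1/2})\|\yy\|_2$ and $\|A_n\1\|_2\sim n^{3/2}\mu_n$, already the crude Cauchy--Schwarz bound on the cross term gives a contribution of order $\eta_n\, n\mu_n\,\Lambda_2(n)\|\yy\|_2^2$, and the $c^2$ term contributes order $\eta_n^2 n^2\mu_n^2\|\yy\|_2^2$. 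For these to be absorbed into $\Lambda_2^2(n)$ (even with the slack factor $2$) you would need $\eta_n\, n\mu_n=O(\Lambda_2(n))$. But Lemma~\ref{lem:ingredients} is stated under the purely deterministic hypotheses of Proposition~\ref{prop:vec-close}, which impose no relation between $\varepsilon_n$, $\mu_n$ and $\Lambda_2(n)$. Worse, in the main random-matrix application one has $\varepsilon_n n\mu_n\asymp\sqrt{n\mu_n\log n}$ while $\Lambda_2(n)\asymp\sqrt{n}\,\sigma_n$, and since $\sigma_n^2=O(\mu_n)$ the required bound reduces to $\log n=O(1)$. So the ``careful quantitative bookkeeping'' you allude to cannot succeed; the perturbation route is genuinely blocked.

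The paper avoids this entirely with a one-line trick: in the Courant--Fischer \emph{min}-max $\lambda_2(n)=\min_{\uu\ne 0}\max_{[\uu,\xx]=0}[B\xx,\xx]/[\xx,\xx]$, instead of taking $\uu=\vv_n$, take $\uu=|\vv_n|^{2-r}$. Then $[\,|\vv_n|^{2-r},\xx\,]=\langle |\vv_n|^{r-2}\oast|\vv_n|^{2-r},\xx\rangle=\langle \1,\xx\rangle$ \emph{exactly}, so the constraint becomes precisely the one defining $\Lambda_2(n)$, with no perturbation needed. After that, your bounds on the numerator and denominator give the stated inequality directly, and the factor $2$ merely absorbs the two $(1+o(1))$ factors coming from $|A_n\vv_n|^{p-2}$ and $|\vv_n|^{r-2}$.
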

\begin{proof}
(a) By  Proposition~\ref{prop:vec-close} and the almost regularity condition in Definition~\ref{defn:almost-regular}, it follows that
\begin{align*}
    \opera{A_n} 
    = \|A_n\vv_n\|_p
    &= \|A_n \1 (n^{-1/r} + o(n^{-1/r}))\|_p\\
    &= \|(n\mu_n + o(n\mu_n)) (n^{-1/r} + o(n^{-1/r}))\1\|_p\\
    &= \mu_n n^{1-1/r+1/p} + o(\mu_n n^{1-1/r+1/p}), 
\end{align*}
from which the claim in (a) follows. 

\noindent
(b) By \eqref{eq:vnorm-def} and 
Proposition~\ref{prop:vec-close}, we have for all sufficiently large $n$ and $x \in \R^n,$ 
\begin{eq}\label{v-norm-2-norm-compare}
    \|\xx\|_{\vv_n} &=\left(\sum_{i=1}^n|v_{n,i}|^{r-2}|x_i|^2\right)^{\frac{1}{2}} 
    = n^{-\frac{r-2}{2r}} \|\xx\|_2 (1+ o(1)).
\end{eq}
This implies that
\begin{eq}
    \max_{\|\xx\|_{\vv_n} \leq 1} \|A_n\xx\|_p =
    \max_{ \xx \neq 0} \frac{\|A_n\xx\|_p}{\|\xx\|_{\vv_n}}
    &=
    \max_{ \xx \neq 0} \frac{\|A_n\xx\|_p(1+ o(1))}{n^{-\frac{r-2}{2r} } \|\xx\|_2}\\
&    = n^{\frac{r-2}{2r}} (1+ o(1))\max_{\|\xx\|_{2} \leq 1} \|A_n\xx\|_p,
\end{eq}
which proves (b).

\noindent
(c)
Recall the inner product defined in~\eqref{eq:inner-prod-def} and 
that $\newsigma^p$ is the largest eigenvalue of  $B$ obtained from the $\vv$-Rayleigh quotient~\eqref{Rayleigh-coefficient-B}.
Thus, by using the Courant-Fischer theorem~\cite[Corollary~III.1.2]{Bhatia97}, and further justifications given below, note that 
\begin{align*}
    \lambda_2(n) = \min_{\uu \neq 0} \max_{\xx:[\uu,\xx]=0}\frac{[B\xx,\xx]}{[\xx,\xx]}
    &\leq \max_{\xx:[|\vv_n|^{2-r} ,\xx]=0}\frac{[B\xx,\xx]}{[\xx,\xx]}\\ 
    &= \max_{\xx:\langle \1, \xx\rangle=0}  \frac{[B\xx,\xx]}{\|\xx\|_{\vv_n}^2}\\
    &\leq n^{1-\frac{2}{r}} \max_{\xx:\langle \1, \xx\rangle=0} \frac{\langle|A_n\vv_n|^{p-2}, |A_n\xx|^2\rangle}{\|\xx\|_{2}^2}\\
    &\leq 2 \mu_n^{p-2}n^{1-\frac{2}{r}+(1-\frac{1}{r})(p-2)}\max_{\xx:\langle \1, \xx\rangle=0} \frac{\|A_n\xx\|_2^2}{\|\xx\|_2^2}\\
    &\leq 2\mu_n^{p-2}n^{\frac{p(r-1)}{r} - 1} \Lambda_2^2(n),
\end{align*}
where the second equality follows since for any $\xx$, $[|\vv_n|^{2-r} ,\xx]=0$ if and only if $\langle \1, \xx\rangle=0$, and the second and third inequalities follow from \eqref{v-norm-2-norm-compare} and the almost regularity.
\end{proof}

Now we have all the ingredients to complete the proof of Proposition~\ref{prop:approx-vector}.
\begin{proof}[Proof of Proposition~\ref{prop:approx-vector}] 
Note that for all large enough $n$, $\|\vv_n\|_{\infty} \leq 2 n^{-1/r}$ by Proposition~\ref{prop:vec-close}.  Thus,   for any $\xx \in \R^n$ with $\|\xx\|_{\infty}\leq 1$, it follows that 
\begin{eq}\label{eq:vnorm-inftynorm}
    \|\xx\|_{\vv_n} = \Big(\sum_{i=1}^n|v_{n,i}|^{r-2}|x_i|^2\Big)^{1/2} 
    \leq 2^{1-\frac{2}{r}} n^{-\frac{1}{2}+\frac{1}{r}}\|\xx\|_2 
    \leq 2^{1-\frac{2}{r}}n^{\frac{1}{r}} \|\xx\|_{\infty}.
\end{eq}
Then the vectors $\vv_n^{(0)} = n^{-1/r}\one$ and $\vv_n^{(1)}$ from \eqref{eq:v1-def} in the nonlinear power iteration satisfy (as justified below)
\begin{align*}
\|\vv_n-\vv_n^{(1)}\|_{\vv_n}
&\leq (1+o(1))\frac{(p-1)\lambda_2}{(r-1)\opera{A_n}^p}\|\vv_n - n^{-1/r}\one\|_{\vv_n} \\
& \leq (1+o(1))\frac{2(p-1)}{r-1}\frac{\Lambda_2^2(n)}{n^2\mu_n^2} \|\vv_n - n^{-1/r}\one\|_{\vv_n} \\
& \leq (1+o(1)) \frac{2^{2-\frac{2}{r}}(p-1)}{r-1}\frac{\Lambda_2^2(n)}{n^{2-\frac{1}{r}}\mu_n^2} \|\vv_n - n^{-1/r}\one\|_{\infty}\\
&\leq C\frac{\Lambda_2^2(n)\varepsilon_n}{n^2\mu_n^2} 
\end{align*}
where 
the first inequality is due to Proposition~\ref{th:boydmain} and the fact that $\opera{A_n}^p = \newsigma^p$,
the second inequality is due to Lemma~\ref{lem:ingredients}~\eqref{fact:sigma-order} and  Lemma~\ref{lem:ingredients}~\eqref{lem:lambda2-bound}, and the third inequality is due to \eqref{eq:vnorm-inftynorm}.
Proposition~\ref{prop:approx-vector} then follows from an application of 
Proposition~\ref{prop:vec-close}.
\end{proof}

\begin{proof}[Proof of Proposition~\ref{prop:norm-approximation}]
Once again considering the vector $\vv^{(1)}$ in \eqref{eq:v1-def} obtained after the first step of the nonlinear power iteration and $\eta_n(A_n) = \|A_n \vv_n^{(1)}\|_{p},$ from \eqref{def-nun}, we have
\begin{align*}
\big|\|A_n\vv_n\|_p - \eta_n(A_n)\big|&\leq 
\big|\|A_n\vv_n\|_p - \|A_n\vv^{(1)}_n\|_p\big|\\
&\leq \|A_n\vv_n - A_n\vv_n^{(1)}\|_p \\
&\leq \|\vv_n - \vv_n^{(1)}\|_{\vv_n} \max_{\|\xx\|_{\vv_n} \leq 1} \|A_n\xx\|_p\\
&\leq \|\vv_n - \vv_n^{(1)}\|_{\vv_n} (1+o(1)) n^{\frac{1}{2}-\frac{1}{r}}\max_{\|\xx\|_{2} \leq 1} \|A_n\xx\|_p\\
&\leq  \|\vv_n - \vv_n^{(1)}\|_{\vv_n} (1+o(1)) n^{\frac{1}{2}-\frac{1}{r}} \|A_n\|_{2\to p},
\end{align*}
where the third inequality is due to Lemma~\ref{lem:ingredients} \eqref{lem:max}.
Proposition~\ref{prop:norm-approximation} then follows on using Proposition~\ref{prop:approx-vector} to bound $\|\vv_n - \vv_n^{(1)}\|_{\vv_n}$.
\end{proof}

\section{Asymptotic normality} \label{sec:asym-normality}
In this section we establish asymptotic normality of $\eta_n(A_n)$ when $A_n$ satisfies Assumption~\ref{assumption-1}. 
We start in Section \ref{subs-prelim} 
with some preliminary results.

\subsection{Almost-sure error bound on the CLT scale} 
\label{subs-prelim}

First, recalling the definition of $\Lambda_2(n)$ in \eqref{def-Lambdatwo}, we prove the following lemma. 
\begin{lemma}\label{claim:lambda2-bound}
Under {Assumption~\ref{assumption-1}} the following holds:
\begin{eq}\label{eq:2nd-evalue-order}
     \Lambda_2 (n) \leq 3\sqrt{n} \sigma_n +\mu_n, \quad \PR_0 \mbox{ eventually almost surely}. 
\end{eq}
\end{lemma}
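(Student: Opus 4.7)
The plan is to prove the bound via two essentially independent steps: a deterministic reduction that removes the rank-one mean part of $A_n$, followed by an application of known spectral-norm concentration for centered symmetric random matrices.

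First I would decompose $A_n = A_n^0 + \mu_n \1\1^T - \mu_n I_n$ using \eqref{eq:defn-A-n-0}. For any vector $\xx \in \R^n$ with $\langle \1, \xx \rangle = 0$, we have $\1\1^T \xx = \bld{0}$, so
\[
A_n \xx = A_n^0 \xx - \mu_n \xx,
\]
and the triangle inequality gives $\|A_n \xx\|_2 \leq \|A_n^0\|_{\mathrm{op}} \|\xx\|_2 + \mu_n \|\xx\|_2$. Taking the supremum over unit vectors orthogonal to $\1$ yields the deterministic estimate
\[
\Lambda_2(n) \leq \|A_n^0\|_{\mathrm{op}} + \mu_n.
\]
So the problem is reduced to bounding the operator norm of the centered matrix $A_n^0$, which has zero diagonal, i.i.d.\ mean-zero off-diagonal entries of variance $\sigma_n^2$, and tail behavior controlled by the Bernstein-type bound in Assumption~\ref{assumption-1}~(iv).

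Next I would invoke an existing sharp spectral-norm bound for such centered symmetric random matrices. Under Assumption~\ref{assumption-1}~(iv), combined with the lower bound $\sigma_n \geq n^{-1/2 + c_0}$ from \eqref{eq:sigma-lower-bound} (which prevents the sparse regime where Wigner-type estimates degrade), the results of \cite{LS18} (see also \cite{EKYY13} for the Erd\H{o}s--R\'enyi case) yield
\[
\|A_n^0\|_{\mathrm{op}} \leq (2 + o(1))\sqrt{n}\,\sigma_n
\]
with probability at least $1 - n^{-K}$ for every fixed $K$, or more generally with a tail that is summable in $n$. In particular, for all $n$ large enough, $\|A_n^0\|_{\mathrm{op}} \leq 3\sqrt{n}\,\sigma_n$ with summable failure probability, and Borel--Cantelli promotes this to $\PR_0$-eventually almost sure. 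Substituting into the deterministic bound above gives $\Lambda_2(n) \leq 3\sqrt{n}\,\sigma_n + \mu_n$ eventually almost surely.

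The only delicate point is verifying that the invoked spectral-norm estimate applies with summable failure probability rather than merely "with probability tending to one", so that the almost-sure conclusion follows from Borel--Cantelli. This is precisely why the assumption $\sigma_n \geq n^{-1/2 + c_0}$ is imposed (as flagged in Remark~\ref{rem:sigma-cond}): it places the matrix in the regime where sharp bounds with polynomial tail decay are available from the literature, so summability is immediate. The rest of the argument is a one-line consequence of the rank-one decomposition and orthogonality to $\1$.
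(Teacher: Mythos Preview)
Your proposal is correct and follows essentially the same approach as the paper: decompose $A_n = A_n^0 + \mu_n\1\1^T - \mu_n I_n$, use orthogonality to $\1$ to kill the rank-one part and obtain $\Lambda_2(n)\le \|A_n^0\|_{\mathrm{op}}+\mu_n$, then apply the spectral-norm bound of \cite{LS18} (with the moment condition of Assumption~\ref{assumption-1}\eqref{assumption1-iv} and the lower bound on $\sigma_n$ from \eqref{eq:sigma-lower-bound} ensuring applicability) to get $\|A_n^0\|_{\mathrm{op}}\le 3\sqrt{n}\,\sigma_n$ eventually almost surely. The paper additionally spells out the rescaling $H_n=A_n^0/(\sqrt{n}\sigma_n)$ and checks the hypotheses of \cite[Theorem~2.9]{LS18} explicitly, but the structure is identical.
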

For the proof, it wil lbe convenient to define the following centered version of $A_n$:
\begin{equation}\label{eq:defn-A-n-0}
    A_n^0 := A_n - \mu_n \1\1^T + \mu_n I_n.
\end{equation}

\begin{proof}[Proof of Lemma~\ref{claim:lambda2-bound}]
First observe that for all vectors $\xx$ with $\langle \1, \xx\rangle=0$, using \eqref{eq:defn-A-n-0}, we can write
\begin{eq}\label{eq:anx-ineq}
    \|A_n\xx\|_2 &=  \|\big(A_n^0 + \mu_n \1\1^T  - \mu_n I_n \big)\xx\|_2
                 \leq \|A_n^0\xx\|_2 + \mu_n\|\xx\|_2.
\end{eq}
Therefore, we have
\begin{eq}\label{eq:lambda-2-nonzero}
    \Lambda_2 (n) &= \max_{\xx:\langle \1, \xx\rangle=0,  \xx\neq \boldsymbol{0}} \frac{\|A_n\xx\|_2}{\|\xx\|_2}
                  \leq \max_{\xx:\ \xx\neq \boldsymbol{0}} \frac{\|A_n^0\xx\|_2}{\|\xx\|_2} +\mu_n.
\end{eq}
Also, note  that the matrix $H_n = (h_{ij}^n)_{1\leq i,j\leq n}$ defined by $H_n := A_n^0/\sqrt{n}\sigma_n $
satisfies the conditions of \cite[Assumption 2.3]{LS18}, namely
\begin{enumerate}
    \item For all $i\in [n]$, $h^n_{ii} = 0$, and for all $i,j\in[n]$ with $i\neq j$,  $\E[h_{ij}^n] = 0$, $\E[(h_{ij}^n)^2] = \frac{1}{n}$.
    \item Setting $q_n = \sqrt{n}\sigma_n$, by  Assumption~\ref{assumption-1}~\eqref{assumption1-iv}, there exists a fixed constant $c_1 >0$ such that for all $n\geq 1$ and $k\geq 3$,
    \begin{align*}
        \E\big[|h_{ij}^n|^k\big] &\leq \frac{\E\big[|a_{ij}^n - \mu_n|^k\big]}{n^{\frac{k}{2}}\sigma_n^k} 
        \leq \frac{k!}{2}\frac{c^{k-2}\sigma_n^2}{n^{\frac{k}{2}}\sigma_n^k}
        \leq (c_1k)^{c_1k}\frac{1}{nq_n^{k-2}}.
    \end{align*}
Also, \blue{$q_n \gg n^{c_0}$,} due to~Assumption~\ref{assumption-1}~\eqref{assumption1-ii} and further,
    $q_n = O(\sqrt{n})$ since $\sigma_n^2 = O(\mu_n) = O(1)$.
\end{enumerate}
Therefore, by  \cite[Theorem 2.9]{LS18}, for all sufficiently large $n$,
\begin{equation}\label{eq:A-0-evalue}
    \max_{\xx:\ \xx \neq 0} \frac{\|A_n^0\xx\|_2}{\|\xx\|_2} \leq 3\sqrt{n} \sigma_n,
\end{equation}
which then implies \eqref{eq:2nd-evalue-order} using  \eqref{eq:lambda-2-nonzero}.
\end{proof}

Below we state a general version of Lemma~\ref{claim:lambda2-bound} that extends the result to the non-zero diagonal entries case.
\begin{lemma}\label{claim:lambda2-bound-2}
Under {Assumption~\ref{assumption-1}} and the assumptions for non-zero diagonal entries in  Remark~\ref{rem:diag}, the following holds:
\begin{eq}\label{eq:2nd-evalue-order-2}
     \Lambda_2 (n) \leq 3\sqrt{n} \sigma_n +\mu_n + \sqrt{2n(\zeta_n^2+\rho_n^2)}, \quad \PR_0\mbox{-eventually almost surely}. 
\end{eq}
\end{lemma}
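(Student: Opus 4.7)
The decomposition $A_n^0 := A_n - \mu_n \1\1^T + \mu_n I_n$ used in Lemma \ref{claim:lambda2-bound} no longer has zero diagonal once the $\aii$ can be nonzero, so I would instead write
\begin{equation*}
A_n = A_n^0 + \mu_n \big(\1\1^T - I_n\big) + D_n,
\end{equation*}
where $D_n := \mathrm{diag}(a_{11}^n, \ldots, a_{nn}^n)$ and $A_n^0 := A_n - \mu_n \1\1^T + \mu_n I_n - D_n$ is the new ``off-diagonal centered'' part, which still has zero diagonal and off-diagonal entries $\aij - \mu_n$. For any $\xx \in \R^n$ with $\langle \1, \xx\rangle = 0$ the rank-one term vanishes, so the triangle inequality gives
\begin{equation*}
\|A_n \xx\|_2 \leq \|A_n^0 \xx\|_2 + \mu_n \|\xx\|_2 + \|D_n \xx\|_2,
\end{equation*}
and hence $\Lambda_2(n) \leq \operaa{A_n^0}{2}{2} + \mu_n + \operaa{D_n}{2}{2}$.

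The first term is handled by repeating, verbatim, the argument of Lemma \ref{claim:lambda2-bound}: since $A_n^0$ still has zero diagonal and i.i.d.\ centered off-diagonal entries satisfying Assumption~\ref{assumption-1}\eqref{assumption1-iv}, the normalized matrix $A_n^0/(\sqrt{n}\sigma_n)$ meets the hypotheses of \cite[Theorem 2.9]{LS18} under \eqref{eq:sigma-lower-bound}, giving $\operaa{A_n^0}{2}{2} \leq 3\sqrt{n}\sigma_n$, $\PR_0$-eventually almost surely. The only genuinely new ingredient is a bound on the diagonal contribution $\operaa{D_n}{2}{2}$.

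For that I would simply use the Frobenius bound
\begin{equation*}
\operaa{D_n}{2}{2} \leq \|D_n\|_F = \Big(\sum_{i=1}^n (\aii)^2\Big)^{1/2},
\end{equation*}
and establish concentration of $\sum_i (\aii)^2$ around its mean $n(\zeta_n^2 + \rho_n^2)$. Under a Bernstein-type moment hypothesis on $G_n$ analogous to Assumption~\ref{assumption-1}\eqref{assumption1-iv} (which is what is tacitly meant by the ``minor modification'' in Remark~\ref{rem:diag}), Bernstein's inequality applied to the centered random variables $(\aii)^2 - \E[(\aii)^2]$ yields
\begin{equation*}
\PR\Big(\sum_{i=1}^n (\aii)^2 > 2n(\zeta_n^2+\rho_n^2)\Big) \leq \exp(-cn),
\end{equation*}
which is summable in $n$; the Borel--Cantelli lemma then gives $\sum_i (\aii)^2 \leq 2n(\zeta_n^2+\rho_n^2)$, $\PR_0$-eventually almost surely. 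Combining the three bounds delivers \eqref{eq:2nd-evalue-order-2}.

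The only real subtlety is the concentration step for $\sum_i (\aii)^2$, which requires some moment control on $G_n$ beyond the first two moments already listed in Remark~\ref{rem:diag}; once such a hypothesis is granted, the argument is routine, which is why the authors classify the needed change as minor. Note that the Frobenius bound on $\operaa{D_n}{2}{2}$ is crude (the true operator norm is $\max_i \aii$), but it is the cleanest way to get a deterministic a.s.\ bound of the form $\sqrt{2n(\zeta_n^2+\rho_n^2)}$ without imposing stronger tail assumptions.
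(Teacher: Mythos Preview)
Your proposal is correct and follows essentially the same route as the paper: the paper also splits off the diagonal part $D_n$, bounds $\Lambda_2(n)\le \|A_n^0\|_{2\to 2}+\mu_n+\big(\sum_i (\aii)^2\big)^{1/2}$ via the triangle inequality on $\{\langle \1,\xx\rangle=0\}$, reuses Lemma~\ref{claim:lambda2-bound} verbatim for the $A_n^0$ piece, and then controls $\frac{1}{n}\sum_i(\aii)^2\le 2(\zeta_n^2+\rho_n^2)$ $\PR_0$-almost surely. The only cosmetic difference is that the paper invokes ``LLN'' for this last step rather than Bernstein's inequality; your observation that some moment control on $G_n$ is implicitly needed (since the $G_n$ vary with $n$) is well taken and makes your version slightly more careful, but the overall argument is the same.
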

The proof of Lemma~\ref{claim:lambda2-bound-2} follows verbatim from the proof of Lemma~\ref{claim:lambda2-bound}, except that the upper bound in~\eqref{eq:lambda-2-nonzero} will be replaced by 
\begin{align*}
    \Lambda_2 (n) = \max_{\xx:\langle \1, \xx\rangle=0,  \xx\neq \boldsymbol{0}} \frac{\|A_n\xx\|_2}{\|\xx\|_2}
    \leq \max_{\xx:\ \xx\neq \boldsymbol{0}} \frac{\|A_n^0\xx\|_2}{\|\xx\|_2} +\mu_n + \Big(\sum_{i=1}^n (a_{ii}^n)^2\Big)^{\frac{1}{2}}.
\end{align*} 
\blue{Using standard concentration bounds \cite[Corollary 2.11]{BLM13} (as used in \eqref{eq:deg-sum}), we can bound
$\sum_{i=1}^n (a_{ii}^n)^2\leq 2n(\zeta_n^2+\rho_n^2)$, $\PR_0$-eventually almost surely. Note that this step requires the moment conditions mentioned in Remark~\ref{rem:diag}.
Rest of the proof is identical to Lemma~\ref{claim:lambda2-bound} since since $A_n^0$ has zero diagonal entries and hence, is omitted.}\\

Next, we prove a bound on the error while approximating $\|A_n\|_{r\to p}$ by $\eta_n(A_n)$.

\begin{lemma}\label{lem:errg-deg}
Under the conditions of Theorem~\ref{thm:asymptotic-normality}, the following holds $\PR_0$-almost surely:  
$$\|A_n\|_{r\to p} =\|A_n\vv_n\|_p= \eta_n (A_n)+o\big( \sigma_n n^{\frac{1}{p} - \frac{1}{r}}\big),$$
where $v_n$ is the maximizer vector in \eqref{def-maxvecn} and $\eta_n(\cdot)$ is defined in \eqref{def-nun}.
\end{lemma}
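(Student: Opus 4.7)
The approach is to invoke Proposition~\ref{prop:norm-approximation} and bound each of the factors on the right-hand side using the almost-sure estimates we have already developed. Lemma~\ref{lem-verification} tells us that Assumption~\ref{assumption-1} together with the choice $\varepsilon_n = \sqrt{5K\log n/(n\mu_n)}$ makes $(A_n)_{n\in\N}$ almost regular and well-balanced $\PR_0$-eventually almost surely, which in turn verifies the hypotheses of Proposition~\ref{prop:norm-approximation}. Consequently, $\PR_0$-eventually almost surely,
\begin{equation}\label{eq:plan-main-bound}
    \bigl|\|A_n\vv_n\|_p - \eta_n(A_n)\bigr| \;\leq\; C\,\frac{\Lambda_2^2(n)\,\varepsilon_n}{\mu_n^2\, n^{3/2+1/r}}\,\|A_n\|_{2\to p},
\end{equation}
so the task reduces to showing that the right-hand side of \eqref{eq:plan-main-bound} is $o(\sigma_n n^{1/p-1/r})$.

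The second step is to estimate the three random quantities in \eqref{eq:plan-main-bound}. For $\Lambda_2^2(n)$, Lemma~\ref{claim:lambda2-bound} gives $\Lambda_2(n) \leq 3\sqrt{n}\sigma_n + \mu_n$ a.s.; since $\sigma_n \geq n^{-1/2+c_0}$ forces $n\sigma_n^2 \geq n^{2c_0}\to\infty$ while $\mu_n^2 = O(1)$, we have $\Lambda_2^2(n) = O(n\sigma_n^2)$ a.s. For $\|A_n\|_{2\to p}$, applying the elementary inclusions $\|\yy\|_p \leq n^{(1/p-1/2)^+}\|\yy\|_2$ for $\yy\in\R^n$ yields $\|A_n\|_{2\to p} \leq n^{(1/p-1/2)^+}\|A_n\|_{2\to 2}$, and the decomposition $A_n = A_n^0 + \mu_n\1\1^T - \mu_n I_n$ combined with \eqref{eq:A-0-evalue} gives $\|A_n\|_{2\to 2} \leq n\mu_n + 3\sqrt{n}\sigma_n = (1+o(1))n\mu_n$ a.s., since $\sqrt{n}\sigma_n = O(\sqrt{n\mu_n}) = o(n\mu_n)$. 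Hence $\|A_n\|_{2\to p} \leq 2 n^{1+(1/p-1/2)^+}\mu_n$ eventually a.s.

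Substituting these bounds into \eqref{eq:plan-main-bound} together with $\varepsilon_n = \Theta(\sqrt{\log n/(n\mu_n)})$, a direct calculation yields
\begin{equation}\label{eq:plan-reduced}
    \bigl|\|A_n\vv_n\|_p - \eta_n(A_n)\bigr| \;\leq\; C' \frac{\sigma_n^2 \sqrt{\log n}}{\mu_n^{3/2}}\, n^{-1/2 + (1/p-1/2)^+ - 1/r}.
\end{equation}
Dividing by $\sigma_n n^{1/p-1/r}$, it remains to show that $\sigma_n\sqrt{\log n}\, n^{-1/2 - \min(1/p,1/2)}/\mu_n^{3/2} = o(1)$. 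Squaring, this is equivalent to $\sigma_n^2 \log n \ll \mu_n^3 n^{1 + 2\min(1/p,1/2)}$. Using the hypothesis $\mu_n\sigma_n^2 = \omega(\log n/n)$ in the form $\log n \ll n\mu_n\sigma_n^2$, the left side is dominated by $n\mu_n\sigma_n^4$, so it suffices to verify $\sigma_n^2 \ll \mu_n^2 n^{2\min(1/p,1/2)}$, which follows immediately from $\sigma_n^2 = O(\mu_n) = O(1)$ and $n^{2\min(1/p,1/2)}\to\infty$.

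The only nontrivial ingredient is the bookkeeping across the two regimes $p\geq 2$ and $p<2$ (which change how $\|A_n\|_{2\to p}$ is controlled in terms of $\|A_n\|_{2\to 2}$); apart from that, the proof is a straightforward concatenation of Proposition~\ref{prop:norm-approximation}, Lemma~\ref{claim:lambda2-bound}, and the assumption \eqref{eq:sigma-lower-bound}, so I do not anticipate a serious obstacle.
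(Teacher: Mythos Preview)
Your overall strategy matches the paper's, and for $p\le 2$ it goes through; but there is an arithmetic slip in \eqref{eq:plan-reduced}, and once it is corrected your bound on $\|A_n\|_{2\to p}$ is too weak for $p>2$.

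Tracking the powers of $n$ in $\Lambda_2^2(n)\,\varepsilon_n\,\|A_n\|_{2\to p}\big/\big(\mu_n^2 n^{3/2+1/r}\big)$ with $\Lambda_2^2(n)=O(n\sigma_n^2)$, $\varepsilon_n=\Theta\big((\log n)^{1/2}n^{-1/2}\mu_n^{-1/2}\big)$ and your estimate $\|A_n\|_{2\to p}\le 2\,n^{1+(1/p-1/2)^+}\mu_n$ gives the exponent $(1/p-1/2)^+ - 1/r$, not $-\tfrac12+(1/p-1/2)^+ - 1/r$. Dividing by $\sigma_n n^{1/p-1/r}$, what you actually have to show is
\[
\frac{\sigma_n\sqrt{\log n}}{\mu_n^{3/2}}\,n^{-\min(1/p,\,1/2)}=o(1),
\]
without the extra $n^{-1/2}$. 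For $p\le 2$ this reduces to $\log n\ll n\mu_n^2$, which does follow from $\sigma_n^2=O(\mu_n)$ and $\mu_n\sigma_n^2=\omega(\log n/n)$, so that case is fine.

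For $p>2$, however, your crude inequality $\|A_n\|_{2\to p}\le \|A_n\|_{2\to 2}\sim n\mu_n$ loses a factor $n^{1/2-1/p}$ and the argument breaks. A concrete failure: take $p=r=4$ and the Erd\H{o}s--R\'enyi adjacency matrix with $\mu_n=n^{-1/4}$ (hence $\sigma_n^2\sim n^{-1/4}$; all hypotheses of Theorem~\ref{thm:asymptotic-normality} are met). Then the displayed ratio equals $n^{-1/8+3/8-1/4}\sqrt{\log n}=\sqrt{\log n}\to\infty$. The fix, which is what the paper does, is to decompose \emph{before} comparing $\ell_p$ to $\ell_2$: for $p>2$,
\[
\|A_n\|_{2\to p}\le \|\E A_n\|_{2\to p}+\|A_n^0\|_{2\to p}
\le \mu_n\bigl(n^{1/2+1/p}+1\bigr)+\|A_n^0\|_{2\to 2},
\]
using $\|\mu_n(\1\1^T-I_n)\xx\|_p\le \mu_n(\|\xx\|_1 n^{1/p}+\|\xx\|_p)$ and $\|\cdot\|_p\le\|\cdot\|_2$ only on the centered piece. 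This yields $\|A_n\|_{2\to p}=O\big(\mu_n n^{1/2+1/p}\big)$ in place of $O(\mu_n n)$, and in the same Erd\H{o}s--R\'enyi example the ratio becomes $n^{-1/4}\sqrt{\log n}=o(1)$.
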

\begin{proof}
It suffices to show that $\PR_0$-eventually almost surely,
\begin{equation}\label{eq:lem-8.3-bound}
    \big|\|A_n\vv_n \|_{p} - \eta_n(A_n)\big|\leq 
C \frac{\sigma_n^3}{\mu_n^2} n^{\frac{1}{p} - \frac{1}{r}} \sqrt{\frac{\log n}{n}},
\end{equation}
for some constant $C>0$, not depending on $n$.
\blue{Indeed, if~\eqref{eq:lem-8.3-bound} holds, then Lemma~\ref{lem:errg-deg} would follow immediately on observing that
$\sigma_n^2 = O(\mu_n)$ and $\mu_n \gg \sqrt{(\log n)/n}$ by Assumption~\ref{assumption-1}~\eqref{assumption1-ii}.

To show~\eqref{eq:lem-8.3-bound}, note that by Lemma~\ref{lem-verification}, under Assumption~\ref{assumption-1} with associated constants $(\mu_n)_{n \in \N},$ $(\sigma_n)_{n \in \N},$ 
(i) the sequence  $(A_n)_{n \in \N}$  is $\PR_0$-almost surely $(\ve_n, \mu_n)_{n\in \N}$ almost regular in the sense of Definition~\ref{defn:almost-regular} with $\varepsilon_n = \Theta (\sqrt{\frac{\log n}{n\mu_n}\cdot \frac{\sigma_n^2}{\mu_n}})$
and 
(ii) for some constant $c'\in (0,1)$, $(c', \mu_n)_{n\in\N}$ well-connected in the sense of Definition~\ref{defn:two-hop}. 
Also, note that the well-connectedness also implies that $A_n^TA_n$ is irreducible.   
In particular,  the conditions of 
Proposition~\ref{prop:vec-close} are satisfied and  we can 
apply Proposition~\ref{prop:norm-approximation} along with Lemma~\ref{claim:lambda2-bound} to conclude that 
\begin{align*}
\big|\|A_n\vv_n \|_{p} - \eta_n(A_n)\big|
&\leq 
C_1\frac{(3\sqrt{n}\sigma_n + \mu_n)^2}{\mu_n^2n^{3/2 + 1/r}}\sqrt{\frac{\log n}{n\mu_n}\cdot \frac{\sigma_n^2}{\mu_n}} \times  \|A_n\|_{2\to p}\\
&\leq \frac{C_2n\sigma_n^3(1+\frac{\mu_n}{\sqrt{n} \sigma_n})^2}{\mu_n^3n^{3/2+1/r}} \sqrt{\frac{\log n}{n}}\times \|A_n\|_{2\to p}.
\end{align*}

To conclude the proof, we establish the following:

\begin{claim}\label{claim:2-to-p-bound}
 For $p\in [1,2]$, (i) $\|A_n\|_{2\to p} = (1+\oP(1)) \mu_n n^{\frac{1}{2} + \frac{1}{p}}$, (ii) For $p>2$,  $\|A_n\|_{2\to p} \leq C \sqrt{\mu_n} n^{\frac{1}{2} + \frac{1}{p}}$ for some constant $C>0$. 
\end{claim}
\begin{claimproof}
For $1\leq p\leq 2$, the claim is immediate from
Lemma~\ref{lem:ingredients}~\eqref{fact:sigma-order}.
For $p>2$, let $\bld{a}_{i} $ denote the $i$-th row of $A_n$. Then $\|\bld{a}_i \|_2^2 = \sum_{j} a_{ij}^2$ is a sum of independent random variables. Using the Law of Large numbers, $\|\bld{a}_i \|_2^2 \leq C n \sigma^2$ with high probability. 
Therefore,
\begin{eq}
    \|A_n\|_{2\to p} = \max_{\xx: \|\xx\|_2\leq 1} \bigg(\sum_{i\in [n]} |\binner{\bld{a}_i,\xx}|^p\bigg)^{\frac{1}{p}} \leq \max_{\xx: \|\xx\|_2\leq 1}\bigg(\sum_{i\in [n]} \|\bld{a}_i\|_2^p\|\xx\|_2^p\bigg)^{\frac{1}{p}}\leq C (n\mu_n)^{\frac{1}{2}} n^{\frac{1}{p}},
\end{eq}
and this completes the proof of the claim. 
\end{claimproof}}
\noindent
Now observe that
$$\big|\|A_n\vv_n \|_{p} - \eta_n(A_n)\big|
\leq C_1\frac{ \Lambda_2^2(n) \varepsilon_n}{\mu_n n^{1-\frac{1}{p}+\frac{1}{r}}}
\leq C_2 \frac{\sigma_n^3}{\mu_n^2} n^{\frac{1}{p} - \frac{1}{r}} \sqrt{\frac{\log n}{n}},$$
$\PR_0$-eventually almost surely, for constants $C_1,C_2>0$, where
the first inequality is due to Proposition~\ref{prop:norm-approximation} and Claim~\ref{claim:2-to-p-bound}, and 
the last step is due to Lemma~\ref{claim:lambda2-bound} and the choice of~$\varepsilon_n$.
This completes the proof of~\eqref{eq:lem-8.3-bound}.
\end{proof}

\begin{remark}\normalfont
\blue{While we do not believe that the the upper bound on $\|A_n\|_{2\to p}$ given in Claim~\ref{claim:2-to-p-bound} for the hypercontractive case ($p>2$) is tight, it is worthwhile to point out the that the bound $(1+\oP(1))\mu_nn^{\frac{1}{2} + \frac{1}{p}}$ does not work in general if $p>2$.
This can be seen from the following observation:
Recall that $\1$ denotes the $n$-dimensional vector
$(1, 1, \ldots, 1)$ and $e_i$ is the $n$-dimensional vector whose $i$-th component is 1 and all other components are 0.
Then note that for any fixed $i\in [n]$,
$$\frac{\|A_n\1\|_p}{\|\1\|_2} = (1+\oP(1))\mu_nn^{\frac{1}{2} + \frac{1}{p}}\qquad\text{and}\qquad \frac{\|A_n e_i\|_p}{\|e_i\|_2} = (1+\oP(1))(n\mu_n)^{\frac{1}{p}}.$$
Also, 
$$\mu_nn^{\frac{1}{2} + \frac{1}{p}} \ll (n\mu_n)^{\frac{1}{p}}\qquad\text{if and only if} \qquad \mu_n \ll n^{- \frac{p}{2(p-1)}}.$$
Therefore, the vector $e_i$ produces a larger norm value if $\mu_n \ll n^{- \frac{p}{2(p-1)}}$.
As a side-note, this observation hints that if $\mu_n$ scales as $n^{-1/t}$ for some $t>2$, then for all sufficiently large $p$, 
the maximizing vector for $\|A_n\|_{2\to p}$ may not be close to $\1$.}
\end{remark}

\subsection{Proof of asymptotic normality} \label{ssec:asymp-normality}
\blue{We proceed with the proof of asymptotic normality using the Taylor expansion. 
Let $\eta_{n,t}(A_n) := \eta_n(t A_n + (1-t)\E[A_n])$. Thus,  $\eta_{n,1}( A_n) = \eta_n(A_n)$ and $\eta_{n,0}( A_n) = \eta_n(\E[A_n])$.
Using the Taylor expansion of $\eta_{n,t}(A_n)$ with respect to $t$, we obtain 
\begin{eq}\label{eq:taylor}
\eta_{n}( A_n) = \eta_{n}( \E[A_n])
+ \frac{\dif}{\dif t} \eta_{n,t}( A_n)\bigg\vert_{t=0}
+\frac{1}{2} \frac{\dif^2}{\dif t^2} \eta_{n,t}( A_n)\bigg\vert_{t=\xi}
\end{eq}
for some $\xi \in [0,1]$.
The next proposition establishes asymptotics of the above derivative terms.
Recall from~\eqref{eq:eps'-def} that
\begin{equation}\label{eq:eps'-def-2}
   \varepsilon_n'= \frac{\sigma_n^2}{\mu_n^3} \frac{(\log n)^2}{n}.
\end{equation}
\begin{proposition} \label{prop:direct-der}
As $n\to\infty$, 
\begin{eq}
   \frac{\dif}{\dif t} \eta_{n,t}( A_n)\bigg\vert_{t=0} &=  (1+\oP(1))n^{-1+\frac{1}{p} - \frac{1}{r}}\sum_{i,j}(a_{ij}^n - \E[a_{ij}^n])\\
  \frac{\dif^2}{\dif t^2} \eta_{n,t}( A_n) &= (1+\OP(\sqrt{\varepsilon_n'}))\Big[p-1 +\frac{1}{r-1}\Big]\frac{n^{-1+\frac{1}{p} - \frac{1}{r}}}{n\mu_n}\sum_{i=1}^n \Big(\sum_{j=1}^n \big(a_{ij}^n - \E[a_{ij}^n]\big)\Big)^2,
\end{eq}
where $\varepsilon_n'$ is as defined in~\eqref{eq:eps'-def-2} and the $\OP(\sqrt{\varepsilon_n'})$ is uniform over $t\in [0,1]$. 
\end{proposition}}
The proof of Proposition~\ref{prop:direct-der} is deferred to Appendix~\ref{app:proof-8.5}. We now complete the proofs of Theorem~\ref{thm:asymptotic-normality} and 
Theorem~\ref{thm:asymptotic-normality-inhom}. 

\begin{proof}[Proof of Theorem~\ref{thm:asymptotic-normality}] 
Note that Lemma~\ref{lem:errg-deg} ensures that $\eta_n(A_n)$ approximates $\|A_n\|_{r\to p}$ on the fluctuation scale, that is,
$$\big|\|A_n\|_{r\to p} - \eta_n(A_n)\big| = o\big( \sigma_n n^{\frac{1}{p} - \frac{1}{r}}\big)\quad \PR_0\mbox{-almost surely}.$$
Thus, it is enough to prove \eqref{eq:asymp-normal} when  $\|A_n\|_{r\to p}$ is replaced with $\eta_n(A_n)$.  
The first term of the Taylor expansion of $\eta_n(A_n)$ from \eqref{eq:taylor} is
\blue{\begin{equation}\label{eq:expect-term}
    \eta_n(\E[A_n]) = \mu_n(n-1) n^{\frac{1}{p}-\frac{1}{r}}.
\end{equation}}
\blue{Note that $\sum_{i<j} (\aij - \mu_n)$ is a sum of of iid random variables with total variation $s_n^2 := \binom{n}{2} \sigma_n^2$.
By Assumption~\ref{assumption-1}~\eqref{assumption1-iv}, it follows that 
\begin{eq}\label{eq:lypanov}
     \frac{1}{s_n^3} \sum_{i<j} \E [|\aij - \mu_n|^3] \leq C\frac{n^2  \sigma_n^2}{n^3\sigma_n^3} = O\Big(\frac{1}{n\sigma_n}\Big), 
\end{eq}
which is $o(1)$ since $n\sigma_n\to\infty$ by Assumption~\ref{assumption-1}~\eqref{assumption1-ii}. Thus Lyapunov's condition \cite[(27.16)]{Bil95} is satisfied and we can apply the central limit theorem for triangular arrays \cite[Theorem 27.3]{Bil95} to conclude that 
\begin{equation}\label{eq:clt}
 \frac{\sum_{i<j} (\aij - \mu_n)}{s_n}=\frac{\sqrt{2}\sum_{i<j} (\aij - \mu_n)}{\sqrt{n(n-1)} \sigma_n} \dto \mathrm{Normal}(0, 1).  
\end{equation}
Thus, Proposition~\ref{prop:direct-der} shows that the scaled second term on the right hand side of \eqref{eq:taylor} is  
\begin{equation}\label{eq:8.14}
    \frac{1}{\sigma_n n^{\frac{1}{p} - \frac{1}{r}}}\times \frac{\dif}{\dif t} \eta_{n,t}( A_n)\bigg\vert_{t=0} = (1+\oP(1)) \frac{2\sum_{i<j} (\aij - \mu_n)}{n \sigma_n}\dto \mathrm{Normal}(0, 2). 
\end{equation}
To evaluate the third term on the right hand side of \eqref{eq:taylor}, first note that Proposition~\ref{prop:direct-der}, together with~Lemma~\ref{lem:asymp-aux-1}~\eqref{lem:asymp-aux-1-2} implies that for all $\xi \in [0,1]$
\begin{align*}
    \frac{\dif^2}{\dif t^2} \eta_{n,t}( A_n)\bigg\vert_{t=\xi} & = (1+\OP(\sqrt{\varepsilon_n'})) (1+\OP(n^{-1/2})) \Big(p-1+\frac{1}{r-1}\Big) \frac{\sigma_n^2}{\mu_n} n^{\frac{1}{p} - \frac{1}{r}}.
\end{align*}
Now, 
\begin{align*}
    \frac{\sigma_n^2}{\mu_n} n^{\frac{1}{p} - \frac{1}{r}}  \sqrt{\varepsilon_n'} \ll n^{\frac{1}{p} - \frac{1}{r}} \sigma_n \quad \iff
    \quad  \frac{\sigma_n^2}{\mu_n} \Big(\frac{\log^2 n}{n\mu_n^3}\Big)^{1/2} \ll 1
    \impliedby \mu_n \gg \frac{\log^{2/3} n}{n^{1/3}},
\end{align*}
which holds due to Assumption~\ref{assumption-1} \eqref{assumption1-ii}.
Thus, we conclude that 
\begin{eq}\label{eq:8.15}
    \frac{\dif^2}{\dif t^2} \eta_{n,t}( A_n)\bigg\vert_{t=\xi} 
    = \Big(p-1+\frac{1}{r-1}\Big) \frac{\sigma_n^2}{\mu_n} n^{\frac{1}{p} - \frac{1}{r}} +\oP(n^{\frac{1}{p} - \frac{1}{r}} \sigma_n).
\end{eq}
To complete the proof of Theorem~\ref{thm:asymptotic-normality}, substitute 
\eqref{eq:expect-term}, \eqref{eq:8.14}, and \eqref{eq:8.15} into \eqref{eq:taylor}.}
\end{proof}
We now turn to the proof of asymptotic normality in the dense, inhomogeneous case. 
First we will prove a version of Lemma~\ref{claim:lambda2-bound} in this inhomogeneous case.
\begin{lemma}\label{claim:lambda2-bound-inhom}
Let $(A_n)_{n\in \N}$ be a sequence of random matrices satisfying the conditions of {Theorem~\ref{thm:asymptotic-normality-inhom}}.
Then the following holds:
\begin{eq}\label{eq:2nd-evalue-order-inhom}
     \Lambda_2 (n) \leq 3\sqrt{c^*n} \bar{\sigma}_n +\mu_n, \quad \PR_0 \mbox{ eventually almost surely}, 
\end{eq}
where recall that $c^*>0$ is a constant defined in {Assumption~\ref{assumption-inhom}}.
\end{lemma}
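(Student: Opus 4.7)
The plan is to follow the structure of the proof of Lemma~\ref{claim:lambda2-bound} verbatim, with the only essential modification being that the uniform operator norm bound on the centered matrix $A_n^0$ from~\cite{LS18, EKYY13} is replaced by the analogous bound for random matrices with inhomogeneous variance profile, established in~\cite{AEK19} (which is exactly the reason this lemma restricts to the dense regime $\liminf_n \bar{\sigma}_n>0$).

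First, I would reuse~\eqref{eq:anx-ineq}--\eqref{eq:lambda-2-nonzero} unchanged: with $A_n^0 = A_n - \mu_n \1\1^T + \mu_n I_n$ as in~\eqref{eq:defn-A-n-0}, for any $\xx$ with $\langle \1,\xx\rangle = 0$ the triangle inequality yields $\|A_n\xx\|_2 \leq \|A_n^0\xx\|_2 + \mu_n\|\xx\|_2$, since $(\1\1^T - I_n)\xx = -\xx$. Taking the supremum reduces the proof to showing that, $\PR_0$ eventually almost surely,
\begin{equation*}
    \|A_n^0\|_{2\to 2} \;\leq\; 3\sqrt{c^*\,n}\,\bar{\sigma}_n.
\end{equation*}

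Second, I would normalize and invoke~\cite{AEK19}. Define $H_n := A_n^0 / (\sqrt{n}\,\bar{\sigma}_n)$. By Assumption~\ref{assumption-inhom}\eqref{assumption2-i}--\eqref{assumption2-ii}, the entries $(h_{ij}^n)_{i<j}$ are independent, have mean zero, zero diagonal, and variance $\sigma_n^2(i,j)/(n\bar{\sigma}_n^2)$, which is bounded above by $(c^*)^2/n$ and bounded below (for $n$ large) by $c_*^2/2n$. The moment condition in Assumption~\ref{assumption-inhom}\eqref{assumption2-v}, together with the assumption $\liminf_n \bar{\sigma}_n > 0$, yields precisely the moment hypotheses required by~\cite{AEK19} (mirroring the computation in the proof of Lemma~\ref{claim:lambda2-bound}, where one verifies $\E[|h_{ij}^n|^k]\leq (c_1k)^{c_1 k}/(n q_n^{k-2})$ with $q_n = \sqrt{n}\,\bar{\sigma}_n \gg 1$). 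The spectral norm bound from~\cite{AEK19} then gives $\|A_n^0\|_{2\to 2}\leq (2+o(1))\,c^*\sqrt{n}\,\bar{\sigma}_n$, $\PR_0$ eventually almost surely.

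Finally, since $c^*\geq 1$ may be assumed without loss of generality, for all sufficiently large $n$ we have $(2+o(1))c^* \leq 3\sqrt{c^*}$, so combining with the first step gives the desired bound. The main obstacle is purely bookkeeping: one must verify that the specific normalization and moment hypotheses stated in~\cite{AEK19} are met by $H_n$, and that the theorem there produces an \emph{almost-sure} (rather than in probability) upper bound; both facts follow from the exponential tail estimates in~\cite{AEK19} combined with the Borel--Cantelli lemma, in direct analogy with how~\cite{LS18} is used in the proof of Lemma~\ref{claim:lambda2-bound}.
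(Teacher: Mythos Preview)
Your approach is essentially the same as the paper's, and the reduction to bounding $\|A_n^0\|_{2\to 2}$ via~\cite{AEK19} is exactly right. There is, however, a small error in your final constant-matching step: the inequality $(2+o(1))c^* \leq 3\sqrt{c^*}$ fails as soon as $c^* > 9/4$, so assuming $c^*\geq 1$ does not rescue it. The paper gets the factor $\sqrt{c^*}$ (rather than $c^*$) by inserting one additional, easy step you omitted: with $S_n := \big(\E[(h^n_{ij})^2]\big)_{i,j}$, the row sums of $S_n$ are at most $c^*$ (since each entry is $\leq c^*/n$), so Ger\v{s}gorin's circle theorem gives $\rho(S_n)\leq c^*$. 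The spectral norm bound from~\cite[Theorem~2.1, Remark~2.2]{AEK19} is of the form $\|H_n\|_{2\to 2}\leq (2+o(1))\sqrt{\rho(S_n)}$, which then yields $\|A_n^0\|_{2\to 2}\leq (2+o(1))\sqrt{c^*\,n}\,\bar{\sigma}_n \leq 3\sqrt{c^*\,n}\,\bar{\sigma}_n$ for all large $n$, matching the stated constant. Apart from this bookkeeping point, your proposal is correct.
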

As shown below, the proof of this lemma follows on arguments similar to the ones used in Lemma~\ref{claim:lambda2-bound}, with the key difference that 
the bound on the $2\to 2$ norm of the centered random matrix needs a more careful treatment.
\begin{proof}[Proof of Lemma~\ref{claim:lambda2-bound-inhom}]
We first prove the following bound on the centered matrix $A_n^0$ from~\eqref{eq:defn-A-n-0}:
$$\frac{\|A_n^0\xx\|_2}{\|\xx\|_2}\leq 3c^* \sqrt{n}\bar{\sigma}_n, \quad \PR_0 \mbox{ eventually almost surely}.$$
To this end, note that the matrix $H_n = (h_{ij}^n)_{1\leq i,j\leq n}$ defined by $H_n = A_n^0/\sqrt{n}\bar{\sigma}_n $ has the following properties:
\begin{enumerate}
    \item By Assumption~\ref{assumption-inhom}~\eqref{assumption2-i}, $h^n_{ii} = 0$ for all $i\in [n]$ and $\E\big[h_{ij}^n\big] = 0$ for all $i,j\in [n]$, $i\neq j$. 
    \item By Assumption~\ref{assumption-inhom}~\eqref{assumption2-ii}, for all sufficiently large $n$,
    $$\frac{c_*}{n} \leq \min_{i,j}\expt{(h_{ij}^n)^2}\leq \max_{i,j}\expt{(h_{ij}^n)^2} \leq \frac{c^*}{n}$$
    \item Also, by Assumption~\ref{assumption-inhom}~\eqref{assumption2-v}, for all sufficiently large $n$, and every $k\geq 3$
    \begin{align*}
        \E\big[|h_{ij}^n|^k\big] &\leq \frac{\E\big[|a_{ij}^n - \mu_n|^k\big]}{n^{\frac{k}{2}}\bar{\sigma}_n^k} 
        \leq\frac{c_k}{n^{k/2}}.
    \end{align*}
\end{enumerate}
This shows that $H_n$ satisfies the conditions in \cite[Theorem~2.1, Remark~2.2]{AEK19}. Further, by Ger{\v s}gorin's circle theorem \cite{Ger31},   the largest eigenvalue of the matrix $\big(\E[(h_{ij}^n)^2]\big)_{i,j}$ is bounded from above by  $2c^*\bar{\sigma}_n^2$.
An application of  \cite[Theorem~2.1, Remark~2.2]{AEK19}  yields~\eqref{eq:2nd-evalue-order-inhom}. 
\end{proof}

The next lemma proves a version of Lemma~\ref{lem:errg-deg} in the inhomogeneous variance case.
\begin{lemma}\label{lem:errg-deg-inhom}
Let $(A_n)_{n\in\N}$ be a sequence of random matrices satisfying the conditions of {Theorem~\ref{thm:asymptotic-normality-inhom}}. 
Then the following holds $\PR_0$-almost surely: 
$$\|A_n\vv_n \|_{p} = \|A_n\vv_n^{(1)}\|_{p}+o\big( \bar{\sigma}_n n^{\frac{1}{p} - \frac{1}{r}}\big).$$
\end{lemma}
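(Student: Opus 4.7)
The plan is to mirror the three-step scheme used in the proof of Lemma~\ref{lem:errg-deg}, replacing each ingredient by its inhomogeneous analogue already established in the paper. First, Lemma~\ref{lem-verification-inhom} guarantees that under Assumption~\ref{assumption-inhom} the sequence $(A_n)_{n\in\N}$ is $\PR_0$-almost surely $(\ve_n,\mu_n)_{n\in\N}$ almost regular and $(\ve_n,\delta_n,\mu_n)_{n\in\N}$ well-balanced with $\ve_n=\Theta(\sqrt{\log n/(n\mu_n)})$ and $\delta_n\ll \mu_n\ve_n$, so the hypotheses of Proposition~\ref{prop:vec-close}, and hence of Proposition~\ref{prop:norm-approximation}, hold $\PR_0$-almost surely. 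Since $\eta_n(A_n)=\|A_n\vv_n^{(1)}\|_p$ by definition, Proposition~\ref{prop:norm-approximation} yields
\begin{equation*}
\big|\|A_n\vv_n\|_p-\|A_n\vv_n^{(1)}\|_p\big|\le C\,\frac{\Lambda_2^2(n)\,\ve_n}{\mu_n^2\, n^{\frac{3}{2}+\frac{1}{r}}}\,\|A_n\|_{2\to p}.
\end{equation*}

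Next, I would bound the two random factors on the right hand side in the inhomogeneous regime. For $\Lambda_2^2(n)$, Lemma~\ref{claim:lambda2-bound-inhom} gives $\Lambda_2(n)\le 3\sqrt{c^*n}\,\bar{\sigma}_n+\mu_n=O(\sqrt{n}\,\bar{\sigma}_n)$ $\PR_0$-eventually almost surely, where the final simplification uses $\mu_n=O(1)$ together with the hypothesis $\liminf_{n\to\infty}\bar{\sigma}_n>0$. For $\|A_n\|_{2\to p}$, I would reprove the inhomogeneous analogue of Claim~\ref{claim:2-to-p-bound}: when $p\le 2$, Lemma~\ref{lem:ingredients}\eqref{fact:sigma-order} applied with $r=2$ immediately yields $\|A_n\|_{2\to p}=(1+o(1))\mu_n n^{1/2+1/p}$; when $p>2$, I would split $A_n=\E[A_n]+A_n^0$, handle the mean part exactly as in the proof of Claim~\ref{claim:2-to-p-bound}, and for the centered part use $\|A_n^0\|_{2\to p}\le \|A_n^0\|_{2\to 2}=O(\sqrt{n}\,\bar{\sigma}_n)$ $\PR_0$-eventually almost surely, the latter being a direct consequence of the proof of Lemma~\ref{claim:lambda2-bound-inhom} (which invokes the inhomogeneous-variance operator norm estimate from~\cite{AEK19}). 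Since $\bar{\sigma}_n=O(1)$, the centered-part term is dominated, giving $\|A_n\|_{2\to p}=O(\mu_n n^{1/2+1/p})$ in all cases.

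Combining these bounds, the right hand side is $\PR_0$-eventually almost surely at most
\begin{equation*}
C'\,\frac{n\,\bar{\sigma}_n^2\,\ve_n}{\mu_n^2\, n^{\frac{3}{2}+\frac{1}{r}}}\cdot \mu_n\, n^{\frac{1}{2}+\frac{1}{p}}=C'\,\frac{\bar{\sigma}_n^2\,\ve_n}{\mu_n}\, n^{\frac{1}{p}-\frac{1}{r}}.
\end{equation*}
The key final observation is that the hypothesis $\liminf_{n\to\infty}\bar{\sigma}_n>0$, combined with the bound $\bar{\sigma}_n^2/\mu_n=O(1)$ from Assumption~\ref{assumption-inhom}\eqref{assumption2-iii}, forces $\mu_n$ to be bounded away from zero; hence $\bar{\sigma}_n\ve_n/\mu_n=O(\sqrt{\log n/n})=o(1)$, and the error term is $o(\bar{\sigma}_n n^{1/p-1/r})$. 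The only genuine new ingredient beyond the homogeneous proof is the $p>2$ case of the $\|A_n\|_{2\to p}$ estimate, where the inhomogeneous spectral input from~\cite{AEK19} must replace the one from~\cite{LS18} used in Claim~\ref{claim:2-to-p-bound}; once that is in place, the remainder is routine bookkeeping identical to the proof of Lemma~\ref{lem:errg-deg}.
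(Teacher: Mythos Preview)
Your proposal is correct and follows essentially the same approach as the paper: the paper's proof is a single sentence stating that the argument for Lemma~\ref{lem:errg-deg} goes through verbatim once Lemma~\ref{claim:lambda2-bound-inhom} replaces Lemma~\ref{claim:lambda2-bound}, and your writeup simply spells out those verbatim steps explicitly, including the straightforward inhomogeneous adaptation of Claim~\ref{claim:2-to-p-bound}.
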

\begin{proof}
The proof follows the proof of Lemma~\ref{lem:errg-deg} verbatim, except that Lemma~\ref{claim:lambda2-bound-inhom} must be used in place of Lemma~\ref{claim:lambda2-bound}.
\end{proof}

\begin{proof}[Proof of Theorem~\ref{thm:asymptotic-normality-inhom}]
Note that Lemma~\ref{lem:errg-deg-inhom} ensures that under the conditions of Theorem~\ref{thm:asymptotic-normality-inhom}, $\eta_n(A_n)$ approximates $\|A_n\|_{r\to p}$ on the fluctuation scale, that is,
$$\big|\|A_n\|_{r\to p} - \eta_n(A_n)\big| = o\big( \bar{\sigma}_n n^{\frac{1}{p} - \frac{1}{r}}\big)\quad \PR_0\mbox{-almost surely}.$$
The rest of proof follows the same steps as the proof of Theorem~\ref{thm:asymptotic-normality}, if one uses 
$\sum_{i<j}\sigma_n^2(i,j)$ in place of $n^2\sigma^2/2$, 
the upper bound $\big(c^*\bar{\sigma}_n)^2$ for the variances of the entries, and the CLT
\begin{equation}\label{eq:clt-inhom}
 \frac{  \sum_{i<j} (\aij - \mu_n)}{ \sqrt{ \sum_{i<j}\sigma_n^2(i,j)}} \dto \mathrm{Normal}(0, 1),  
\end{equation}
in place of \eqref{eq:clt}.
\end{proof}

\section{Relation to the \texorpdfstring{$\ell_r$}{lr} Grothendieck problem}
\label{sec:Groth}
We end this section with the proof of Proposition~\ref{prop:groth}.
\begin{proof}[Proof of  Proposition~\ref{prop:groth}]
Let $x^* \in \R^n$ be a maximizer of $x^T A x$ with $\|x^*\| = 1$. Then, using the method of Lagrange multipliers, there exists $\lag \in \R$ such that if $g: \R^n \mapsto \R$ is the function given by 
\[ g(\xx) = \xx^T A  \xx - \lag \left(\|\xx\|_r^r -1 \right), 
\]
then $\xx^*$ solves the equation 
\begin{equation}
    \label{eq-nabla}
\nabla g(\xx) = 2 A  \xx - \lag r \Psi_r (\xx) = 0,
\end{equation}  
where recall $\Psi_r(\xx) = |\xx|^{r-1} \sgn(\xx).$   
Taking the inner product of $\xx^*$ with the left-hand side of \eqref{eq-nabla} evaluated at $x = \xx^*$, and using the fact that $\langle \xx^*, \Psi_r(\xx^*) \rangle =\|\xx^*\|_r = 1$, it can be seen that 
\begin{equation}\label{eq:groth-lagrange-relation}
    M_r (A ) = \sup_{\|\xx\|_{r}\leq 1} \xx^T A \xx = (\xx^*)^T A  \xx^* = \frac{\lag r}{2}.
\end{equation} 
Now, fix any nonnegative solution $\yy$ of \eqref{eq-nabla}. 
It follows that
\begin{eq}\label{eq:groth-lagrange2}
    \Psi_{r^*}(A^T\yy) = \Big(\frac{\kappa r}{2}\Big)^{\frac{1}{r-1}}\yy
\end{eq}
and also, for $r\geq 2$ and $p = r^*= r/(r-1)$,
\begin{eq}
    &\Psi_p(A \yy) = \Big(\frac{\kappa r}{2}\Big)^{p-1}\Psi_p(\Psi_r(\yy))\\
    \iff &
    A ^T\Psi_p(A \yy) = \Big(\frac{\kappa r}{2}\Big)^{p-1}A^T\Psi_p(\Psi_r(\yy))\\
    \iff & S\yy=
    \Psi_{r^*}(A^T\Psi_p(A\yy)) = \Big(\frac{\kappa r}{2}\Big)^{\frac{p-1}{r-1}}\Psi_{r^*}(A^T\Psi_p(\Psi_r(\yy))).
\end{eq}
Choosing $p = r^*= r/(r-1)$, we have $\Psi_p(\Psi_r(\yy)) = \yy$, and thus
\begin{eq}\label{eq:sy-prop}
   S\yy = \Big(\frac{\kappa r}{2}\Big)^{\frac{p-1}{r-1}}\Psi_{r^*}(A^T\yy)
    = \Big(\frac{\kappa r}{2}\Big)^{\frac{p}{r-1}}\yy,\quad\mbox{due to \eqref{eq:groth-lagrange2}}.
\end{eq}
\blue{Therefore, $S\yy \propto \yy$.
Also, note that since $r\geq 2$, we have 
$p=r^*\leq r$.
Thus, from Lemma~\ref{lem:vcomp}, we know that $S\xx = \newsigma^{\frac{p}{r-1}} \xx$ has a unique solution in $\xx$ that has all positive entries when $A$ is a symmetric matrix with nonnegative entries and $A^TA$ is irreducible (see Proposition~\ref{prop:iteration-converge}).
Since the steps between \eqref{eq-nabla} and~\eqref{eq:sy-prop} consist of implications in both directions, 
we conclude that \eqref{eq-nabla} also has a unique positive solution $\xx^*$} and for $p=r^*$, 
\begin{eq}
    \|A\|_{r\to p}^{\frac{p}{r-1}} = \Big(\frac{\kappa r}{2}\Big)^{\frac{p}{r-1}} 
    &\implies  \|A\|_{r\to p} = \frac{\kappa r}{2}.
\end{eq}
Therefore, \eqref{eq:groth-lagrange-relation} yields that $M_r (A) = \|A\|_{r\to r^*}$ and the proof follows. 
\end{proof}

{\small
\bibliographystyle{plain}
\bibliography{operator-norm-CLT}
}


\appendix
\addtocontents{toc}{\protect\setcounter{tocdepth}{1}}

\blue{
\section{Proof of Proposition~\ref{prop:direct-der}}
\label{app:proof-8.5}
Throughout this appendix, we will omit sub-/superscript $n$. Also, we will repeatedly use the fact that row sums of the $\E[A]$ matrix is $(n-1)\mu = n\mu (1+o(1))$.
Recall
\begin{align*}
    A_t &= tA + (1-t)\expt{A}, \quad\text{for }t\in [0,1],\\
    \bA &= A - \expt{A},\\
    \bd &= \bA\1,\\
    \bm_k &= \binner{\bd^{\oast k}, \1}, \quad k\geq 1.
\end{align*}
Define $ E_t := \Psi_p(A_t\1)$.
We will now calculated the expression of the derivatives, along with the value of the first derivative at $t=0$.
\paragraph{Derivatives of $E_t$.}
Since $E_t= \Psi_p(A_t\1)$,  
\begin{eq}\label{eq:app:der-Et}
E_t' &= (p-1)\Psi_{p-1}(A_t\1) \oast \bd
\\
E_t''& = (p-1)(p-2)\Psi_{p-2}(A_t\1) \oast \bd^{\oast 2}.
\end{eq}
At $t=0$, we have 
\begin{eq}
E_0&= (n\mu)^{p-1}\1 (1+o(1)),
\\E_0' &= (p-1)(n\mu)^{p-2} \bd(1+o(1)).
\end{eq}

\paragraph{Derivatives of $F_t$.}
$F_t = A_t \Psi_p(A_t\1) = A_t E_t$. Then,
\begin{eq}\label{eq:app-ft''}
F_t'&= \bA E_t + A_t E_t',\\
F_t''&= 2\bA E_t' + A_t E_t''.
\end{eq}
At $t=0$, we have 
\begin{eq}
F_0&= (n\mu)^{p}\1(1+o(1)),
\\F_0'&= (n\mu)^{p-2}\big[ n\mu \bd + (p-1)\bm_1 \mu\1\big](1+o(1)).
\end{eq}

\paragraph{Derivatives of $S_t$.}
$S_t  = \Psi_{r'}(F_t)$. Then 
\begin{eq}\label{eq:ders-st}
S_t' &=  (r'-1)\Psi_{r'-1} (F_t)\oast F_t'\\
S_t''&= \Psi_0(F_t)\oast\big[(r'-2)S_t'\oast F_t' + (r'-1) S_t\oast F_t''\big],
\end{eq}
where the second step follows by noting that
\begin{eq}\label{eq:app-f's'}
&F_t\oast S_t' = F_t\oast \big((r'-1)\Psi_{r'-1} (F_t)\oast F_t'\big) = (r'-1)\Psi_{r'} (F_t)\oast F_t' = (r'-1)S_t\oast F_t'\\
&\implies F_t'\oast S_t' + F_t \oast S_t'' = (r'-1)\big[F_t'\oast S_t' + S_t\oast F_t''\big].
\end{eq}
At $t=0$, we have 
\begin{eq}
S_0&=(n\mu)^{\frac{p}{r-1}}\1(1+o(1)),\\
S_0'&= (r'-1)(n\mu)^{\frac{p}{r-1}-2}\big[ n\mu \bd + (p-1)\bm_1 \mu\1\big](1+o(1)).
\end{eq}

\paragraph{Derivatives of $s_t$.}
$s_t = \|S_t\|_r$

\begin{eq}\label{eq:app:sdder-2}
s_t' &= s_t^{-(r-1)}\binner{F_t, S_t'}\\
s_t''&= -(r-1)\frac{(s_t')^2}{s_t}+ s_t^{-(r-1)}\big[\binner{F_t', S_t'} + \binner{F_t, S_t''}\big]\\
&= -(r-1)\frac{(s_t')^2}{s_t}+ (r'-1)s_t^{-(r-1)}
\big[\binner{F_t', S_t'} + \binner{S_t, F_t''}\big]
\end{eq}

\begin{align*}
    & s_t^{r-1}s_t' = \binner{\Psi_r(S_t), S_t'} = \binner{F_t, S_t'}\\
    &\implies (r-1)s_t^{r-2} (s_t')^2 + s_t^{r-1} s_t''=
    \binner{F_t', S_t'} + \binner{F_t, S_t''}\end{align*}
At $t=0$, we have 
\begin{eq}
s_0&= (n\mu)^{\frac{p}{r-1}}n^{\frac{1}{r}}(1+o(1)),\\
s_0'&= p(r'-1)(n\mu)^{\frac{p}{r-1}-1}n^{-1+\frac{1}{r}}\bm_1(1+o(1)).
\end{eq}

\paragraph{Derivatives of $G_t$.}
$G_t  = A_tS_t$.

\begin{eq}\label{eq:app:Gtder}
G_t' &= \bA S_t + A_t S_t'\\
G_t''&= 2\bA S_t' + A_t S_t''.
\end{eq}
At $t=0$, we have 
\begin{eq}
G_0 &= (n\mu)^{\frac{p}{r-1}+ 1}\1(1+o(1))\\
G_0' &=  (n\mu)^{\frac{p}{r-1}-1}\big[ n\mu \bd + p(r'-1)\bm_1 \mu\1\big](1+o(1)).
\end{eq}

\paragraph{Derivatives of $g_t$.}
$g_t  = \|A_tS_t\|_p$.

\begin{eq}\label{eq:ders-gt}
g_t' &= g_t^{-(p-1)}\binner{\Psi_p(G_t), G_t'}, \\
g_t''&= -(p-1)\frac{(g_t')^2}{g_t}+g_t^{-(p-1)}\big[ (p-1)\binner{\Psi_{p-1}(G_t), (G_t')^{\oast 2}} + \binner{\Psi_p(G_t), G_t''}\big],
\end{eq}
where we have used
\begin{align*}
    g_t^{p-1}g_t' &= \binner{\Psi_p(G_t), G_t'},  \\
    (p-1)g_t^{p-2}(g_t')^2 + g_t^{p-1}g_t''
    &= (p-1)\binner{\Psi_{p-1}(G_t), (G_t')^{\oast 2}} + \binner{\Psi_p(G_t), G_t''}.
\end{align*}
At $t=0$, we have 
\begin{eq}
g_0 &=(n\mu)^{\frac{p}{r-1}+1}n^{\frac{1}{p}}(1+o(1)),\\
g_0'&= (n\mu)^{\frac{p}{r-1}}n^{-1+\frac{1}{p}}(p(r'-1)+1)\bm_1(1+o(1)).
\end{eq}
Therefore, at $t=0$,
\begin{eq}\label{eq:app:gtstder}
    \frac{d}{dt}\Big(\frac{g_t}{s_t}\Big)\bigg\vert_{t=0} &= n^{-1+\frac{1}{p} - \frac{1}{r}}\bm_1(1+o(1)).
\end{eq}

\subsection{Auxiliary results}
We start by listing a few auxiliary results that will be used in the calculation of the second derivatives.
Throughout the rest of the appendix, $\varepsilon$ will be  given by \eqref{eq:eps-choice}.
Note that due to Lemma~\ref{lem-verification},
with high probability,
uniformly for all $t \in [0,1]$,
$A_t\1 = n\mu \1 (1+ O(\varepsilon))$, and hence, throughout this section we will use, without reference,
that with high probability, uniformly for all $t \in [0,1]$
\begin{eq}\label{eq:app:etftstgt}
    E_t&= (n\mu)^{p-1}\1 (1+O(\varepsilon))\\
    F_t&= (n\mu)^{p}\1(1+O(\varepsilon))\\
    S_t&=(n\mu)^{\frac{p}{r-1}}\1(1+O(\varepsilon)),\\
    G_t &= (n\mu)^{\frac{p}{r-1}+ 1}\1(1+O(\varepsilon)).
\end{eq}

\begin{lemma} \label{lem:asymp-aux-1}
\label{lem:0.2}
Let $B_{\infty} (\varepsilon):= \{\xx\in \R^n: \|\xx - \1\|_\infty \leq \varepsilon\}$. Then the following hold:
\begin{enumerate}[{\normalfont (i)}]
    \item \label{lem:asymp-aux-1-1} \label{lem:0.2-3} $\|\bd\|_{\infty} \lesssim \varepsilon n\mu$, and $$\sup_{\xx\in B_{\infty} (\varepsilon)}\|\bA\xx - \bd\|_{\infty} \lesssim \varepsilon n\mu.$$

    \item \label{lem:app-lem-item2}
    $|\bm_1| = |\binner{\1, \bd}|\label{lem:asymp-aux-1-2-1}
= \OP( n\sigma \sqrt{\log n})$, and 
    $$\sup_{\xx, \yy \in B_{\infty} (\varepsilon)}|\binner{\xx, \bA \yy} - \binner{\1,\bd}| = \OP( \varepsilon n^{3/2} \sigma).$$
    
    \item \label{lem:asymp-aux-1-2} $\bm_2 = \binner{\1, \bd^{\oast 2}}= n^2\sigma^2 (1+ \OP(n^{-1/2}))$ and  with high probability  
    $$\sup_{\xx, \yy,\zz\in B_{\infty} (\varepsilon)}|\binner{\xx, (\bA\yy)\oast (\bA\zz)} - \binner{\1, \bd^{\oast 2}}|= \OP(  \varepsilon n^2\sigma^2) .$$
    
    \item \label{lem:asymp-aux-1-3}
    $$\sup_{\xx, \yy\in B_{\infty} (\varepsilon)} | \binner{\xx, \bA(\yy\oast \bd)} - \binner{\1, \bd^{\oast 2}} |  =  \OP(\varepsilon n^2 \sigma^2).$$

    \item \label{lem-item:app:1.13}
$\binner{\1, ( A_t \bd )^{\oast 2}} = \OP(n^3 \sigma^2)$,  and uniformly for all $t\in [0,1]$,
\begin{align*}
    \sup_{\xx, \yy\in B_{\infty}(\varepsilon)} \binner{\1, (A_t (\xx \oast (\bA \yy)))^{\oast 2}}   = \OP\big( n^3 \sigma^4 \mu^{-1} (\log n)^2\big). 
\end{align*}
    
\end{enumerate}
\end{lemma}

\begin{proof}
(i)\ The first bound follows from Lemma~\ref{lem-verification}. Also, 
$$\sup_{\xx\in B_{\infty} (\varepsilon)}\|\bA\xx - \bd\|_{\infty}\leq \varepsilon\max_i \sum_{j=1}^n |a_{ij} - \mu| \leq \varepsilon \max_i (d_i+n\mu) \lesssim \varepsilon n\mu. $$

\noindent (ii) The bound on $\binner{\1,\bd}$ follows  using $\var(\binner{\1, \bd}) = O(n^2\sigma^2)$ and Chebyshev's inequality. 
Let $\xx = \1 + \varepsilon \ww_x$ and $\yy = \1+ \varepsilon\ww_y$ with $\|\ww_x\|_{\infty}\leq 1$ and $\|\ww_y\|_{\infty}\leq 1$.
Then
\begin{equation}
    \label{eq:lem-01-1}
     \binner{\xx, \bA \yy} = 
    \binner{\1, \bA\1} + \varepsilon(\binner{\ww_x, \bA \1} + \binner{\1, \bA \ww_y})
    +\varepsilon^2 \binner{\ww_x, \bA \ww_y}
\end{equation}
We have, 
\begin{align*}
    \E\Big[\sum_i |d_i - n\mu|\Big]
    \leq n \sqrt{\E\big[(d_1 - n\mu)^2\big]}=n \sqrt{\sum_j \E(a_{ij} - \mu)^2}
    = O(n^{3/2} \sigma).
\end{align*}
Thus,
\begin{eq}\label{eq:app:2.2}
\sup_{\ww_x\neq \boldsymbol{0}, \|\ww_x\|_{\infty}\leq 1}|\binner{\ww_x, \bA \1}|&=\sum_{i}|d_i - n\mu| =  \OP\big(n^{3/2} \sigma \big),
\\
\sup_{\ww_x,\ww_y\neq \boldsymbol{0}} \Big|\frac{\binner{\ww_x,\bA\ww_y}}{\|\ww_x\|_2 \|\ww_y\|_2}\Big| &\leq \|\bA\|_{2\to 2}= \OP( \sigma\sqrt{n}),  
\end{eq}
where the final step in the second inequality follows using~\eqref{eq:A-0-evalue}.
Also, $\binner{\1, \bA \ww_y} = \binner{\ww_y, \bA\1}$. 
Thus, plugging in the value of $\varepsilon$,  Part (ii) follows from~\eqref{eq:lem-01-1} and~\eqref{eq:app:2.2}.\\

\noindent (iii)
Note that $\binner{\1, \bd^{\oast 2}} = \sum_{i,j,k}  (a_{ij} -\mu)(a_{ik} -\mu),$ and thus,
\begin{align*}
    \E[\binner{\1, \bd^{\oast 2}}]  &= \sum_{i,j,k} \E[(a_{ij} -\mu)(a_{ik} -\mu)]  = (1+O(1/n))n^2 \sigma^2, \\
    \E[\binner{\1, \bd^{\oast 2}}^2]  &= \sum_{\substack{i,j,k \\ i',j',k'}} \E[(a_{ij} -\mu)(a_{ik} -\mu)(a_{i'j'} -\mu)(a_{i'k'} -\mu)]  =   n^4\sigma^4(1+O(1/n)).
\end{align*}
Hence, we can conclude the asymptotics of $\binner{\1, \bd^{\oast 2}}$ using  Chebyshev's inequality. Next, there exists $\ww_x,\ww_y,\ww_z\in \R^n$ such that  $\|\ww_x\|_{\infty}\leq 1$,  $\|\ww_y\|_{\infty}\leq 1$, $\|\ww_y\|_{\infty}\leq 1$, and 
\begin{align} \label{eq:expr-d-in-2}
    &\binner{\xx, (\bA\yy)\oast (\bA\zz)}\\
    &= \binner{\1+ \varepsilon\ww_x, (\bd +\varepsilon\bA\ww_y)\oast (\bd +\varepsilon\bA\ww_z)}\\
    &=\binner{\1, \bd^{\oast 2}}+
    \varepsilon\Big[\binner{\ww_x, \bd^{\oast 2}}+ \binner{\1, \bd\oast (\bA\ww_y)} + \binner{\1, \bd\oast (\bA \ww_z)}\Big]\\
    &\hspace{2cm}+\varepsilon^2\Big[\binner{\ww_x, \bd\oast \bA\ww_y} + \binner{\ww_x, \bd\oast\ww_z}\Big]
   +\varepsilon^3\binner{\ww_x, (\bA\ww_y)\oast (\bA\ww_z)},
\end{align}
where we bound, with high probability,
\begin{align*}
    |\binner{\ww_x, \bd^{\oast 2}}|&\leq \binner{\1, \bd^{\oast 2}}\lesssim n^2\sigma^2,\\
   |\binner{\1, \bd\oast (\bA\ww_y)}| & \leq \|\bd\|_2 \|\bA\|_{2\to 2} \|\ww_y\|_2\lesssim n^2\sigma^2\\
   |\binner{\ww_x, \bd\oast \bA\ww_y}|&\leq \|\bd\oast\ww_x\|_2 \|\bA\|_{2\to 2} \|\ww_y\|_2
   \lesssim n^2\sigma^2\\
   |\binner{\ww_x, (\bA\ww_y)\oast (\bA\ww_z)}|
   &\leq |\binner{\1, |(\bA\ww_y)\oast (\bA\ww_z)|}|
   \leq \|\bA\ww_y\|_2\|\bA\ww_z\|_2\leq \|\bA\|_{2\to 2}^2 n \lesssim n^2\sigma^2.
\end{align*}
Therefore, Part (iii) follows. \\

\noindent
(iv) Note that 
\begin{align*}
    \binner{\xx, \bA(\yy\oast \bd)}
    &= \binner{\1, \bA(\yy\oast \bd)} 
    + \varepsilon \binner{\ww_x, \bA(\yy\oast \bd)}\\
    &= \binner{\1,\bd^{\oast 2}} (1+\varepsilon) +  \varepsilon \binner{\ww_x, \bA(\yy\oast \bd)}.
\end{align*}
Therefore, with high probability, uniformly for all $\xx, \yy \in B_{\infty} (\varepsilon)$,
\begin{align*}
    \big|\binner{\xx, \bA(\yy\oast \bd)} - \binner{\1,\bd^{\oast 2}}\big|&\leq \varepsilon\binner{\1,\bd^{\oast 2}}+ \varepsilon \|\ww_x\|_2 \|\bA\|_{2\to 2} \|\yy\oast \bd\|_2\lesssim \varepsilon n^2 \sigma^2 = \OP(\varepsilon\bm_2),
\end{align*}
where we have again used that $\|\bA\|_{2\to 2} \lesssim \sigma \sqrt{n}.$\\

\noindent (v) Note that 
\begin{align*}
\E\big[\binner{\1, ( A_t \bd )^{\oast 2}}  \big]    = \sum_i \sum_{j,k,j',k'}\E\big[ a^t_{ij} (a_{jk} - \mu)a^t_{ij'} (a_{j'k'} - \mu) \big].
\end{align*}
We can only have a non-zero contribution from an expectation term only if  $\{j,k\}$ equals one of $\{i,j\}, \{i,j'\}, \{j',k'\}$, and, $\{j',k'\}$ equals one of $\{i,j\}, \{i,j'\}, \{j,k\}$. 
This implies that $i = k = k'$ or $\{j, k\} = \{j', k'\}$.
In both cases, there are at most $n^3$ choices of the indices, and each of the terms can be at most $O(\sigma^2)$ (using Assumption~\ref{assumption-1} \eqref{assumption1-iv} to bound the higher moments). 
Therefore, applying Markov's inequality yields
\begin{eq}\label{eq:app:1.13}
\binner{\1, ( A_t \bd )^{\oast 2}} = \OP(n^3\sigma^2).
\end{eq}
Next, 
\begin{eq}\label{app:eq: 1.08-b}
    A_t (\xx \oast (\bA \yy)) =  
    A_t \bd 
    +\varepsilon A_t (\ww_x \oast \bd)
    +\varepsilon A_t (\bA \ww_y)
    +\varepsilon^2 A_t (\ww_x \oast (\bA \ww_y)).
\end{eq}
Thus, 
\begin{eq}\label{eq:app:1.14}
\binner{\1,(\varepsilon A_t (\ww_x \oast \bd))^{\oast 2}} 
\leq  \varepsilon^2\| A_t  |\bd|\|_{2}^2\lesssim \varepsilon^2 (n\mu)^2  \|\bd\|_2^2 =  \OP(  n^3  \sigma^4 \log n). 
\end{eq}
Also, 
\begin{align*}
   \big|( \varepsilon A_t (\bA \ww_y))_i\big|
    &= \varepsilon \big| \sum_{j,k}a_{ij}^t \ba_{jk} (\ww_y)_k \big|
    =\varepsilon \big|\sum_k (\ww_y)_k \sum_{j}a_{ij}^t \ba_{jk}  \big|
    \lesssim \varepsilon \sum_{k} \big|\sum_j a_{ij}^t \ba_{jk} \big|,
\end{align*}
and thus, 
\begin{eq}\label{eq:app:eq-1.13-a}
    \binner{\1,(\varepsilon A_t (\bA \ww_y))^{\oast 2}} 
    &\leq \varepsilon^2 \sum_i \Big( \sum_{k} \big|\sum_j a_{ij}^t \ba_{jk} \big|\Big)^2.
\end{eq}
Taking expectation, 
\begin{eq}\label{eq:app:eq-1.13}
\sum_i \E\Big( \sum_{k} \big|\sum_j a_{ij}^t \ba_{jk} \big|\Big)^2
\leq 
\sum_i \Big( \sum_{k} \Big[ \sum_{j, j'} \E\big(a_{ij}^t \ba_{jk} a_{ij'}^t \ba_{j'k}\big)\Big]^{1/2} \Big)^2,
\end{eq}
where we have used the following fact:
\begin{fact}
For any collection of real-valued random variables $\{X_1, X_2, \ldots, X_n\}$,
$$\E\Big(\sum_{k}|X_k|\Big)^2 \leq \Big(\sum_k\big(\E[X_k^2]\big)^{1/2}\Big)^2.$$
\end{fact}
\noindent
Indeed, the above fact can be seen by using the Cauchy-Schwarz inequality.
Now, the expectation terms in~\eqref{eq:app:eq-1.13} can be non-zero only if $j=j'$ or $k=i$. Thus, for any fixed $i$, when $k=i$, we have
\begin{align*}
    \Big[ \sum_{j, j'} \E\big(a_{ij}^t \ba_{ji} a_{ij'}^t \ba_{j'i}\big)\Big]^{1/2} = O(n(\mu \sigma^2)^{1/2}),
\end{align*}
and, when $k\neq i$,
\begin{align*}
 \Big[ \sum_{j, j': j = j'} \E\big(a_{ij}^t \ba_{jk} a_{ij'}^t \ba_{j'k}\big)\Big]^{1/2}  = O((n\mu \sigma^2)^{1/2})
\end{align*}
Therefore, plugging the bounds in~\eqref{eq:app:eq-1.13}, we get 
$$\sum_i \E\Big( \sum_{k} \big|\sum_j a_{ij}^t \ba_{jk} \big|\Big)^2 = O(n^{4}\mu\sigma^2),$$
and hence, from~\eqref{eq:app:eq-1.13-a},
\begin{eq}\label{eq:app:1.17}
\binner{\1,(\varepsilon A_t (\bA \ww_y))^{\oast 2}} = \OP\big(n^3\sigma^4\mu^{-1}\log n\big).
\end{eq}
Next, 
\begin{eq}\label{eq:app:1.18}
   \binner{\1,(\varepsilon^2 A_t (\ww_x \oast (\bA \ww_y)))^{\oast 2}} 
   \leq \varepsilon^4  \binner{\1,( A_t \big(|\bA|\1\big) )^{\oast 2}}
   = \OP(\varepsilon^4 n^5\mu^2\sigma^2) = \OP(n^3\sigma^4\mu^{-1}(\log n)^2),
\end{eq}
where $|\bA| = (|a_{ij} - \mu|)_{i,j}$.
Therefore, using~\eqref{eq:app:1.13}, \eqref{eq:app:1.14}, \eqref{eq:app:1.17},~\eqref{eq:app:1.18}, and the fact that for any $x_i \in \R$, $i=1,2,3,4$, $(x_1 + x_2 + x_3 + x_4)^4 \leq 16(x_1^4 + x_2^4 + x_3^4 + x_4^4)$, we get
\begin{eq}\label{eq:app-1.20}
 \sup_{\xx, \yy \in B_\infty(\varepsilon)} \big|\binner{\1, (A_t (\xx \oast (\bA \yy)))^{\oast 2}}\big| 
    &= \OP\big(n^3 \sigma^4 \mu^{-1} (\log n)^2\big),
\end{eq} 
and the proof follows.
\end{proof}

\subsection{Calculation of second derivatives at arbitrary point}

Our goal is to calculate $\frac{d^2}{dt^2}\Big(\frac{g_t}{s_t}\Big)$ at an arbitrary point $t \in [0,1]$.

\subsubsection{Derivative of $s_t$ as given in~\eqref{eq:app:sdder-2}}
\label{app:ssec-sders}
The goal of this section is to prove the following lemma:
\begin{lemma}\label{lem:app:b.3}
Uniformly over $t\in [0,1]$, 
\begin{align*}
    |s_t'|&\lesssim (n\mu)^{\frac{p}{r-1}- 1}n^{\frac{1}{r}}\sqrt{\log n}\cdot\frac{\sigma^2}{\mu}\\
    s_t'' &= (r'-1)(r'-1+p(p-1))(n\mu)^{\frac{p}{r-1}-2}n^{-1+\frac{1}{r}}\bm_2 (1+\OP(\sqrt{\varepsilon'})).
\end{align*}
\end{lemma}

To prove Lemma~\ref{lem:app:b.3}, we need to calculate mainly three terms: $\binner{F_t, S_t'}$, $\binner{F_t', S_t'}$, and 
$\binner{S_t, F_t''}$.
We will calculate the values of these terms in this section at an arbitrary point $t\in [0,1]$.
Let us denote by $\xx,\yy,\zz$ etc generic variable vectors in $B_{\infty} (\varepsilon):= \{\xx\in \R^n: \|\xx - \1\|_\infty \leq \varepsilon\}$, which can change values from line to line.

\paragraph{Calculating $\binner{F_t, S_t'}$.}
From~\eqref{eq:app-f's'}, note that 
\begin{align*}
    \big|\binner{F_t, S_t'} \big| 
    &= (r'-1) \big|\binner{S_t, F_t'} \big|
     = (r'-1) \big|\binner{S_t, \bA E_t + A_t E_t'} \big|\\
    &\lesssim (n\mu)^{\frac{p}{r-1}+ p- 2}\big[n\mu\binner{\xx, \bA\yy} + \binner{\xx, \zz\oast \bd}]\lesssim  (n\mu)^{\frac{p}{r-1}+ p- 1}\varepsilon n^{3/2}\sigma,
\end{align*}
where in the last step, we have used Lemma~\eqref{lem:asymp-aux-1-2-1} and the fact that $\binner{\xx, \zz\oast \bd} = \binner{\xx\oast\zz,  \bA\1}$. 

\paragraph{Calculating $\binner{F_t', S_t'}$.}
Due to \eqref{eq:ders-st},
\begin{eq}\label{eq:app:local-2.12}
    \binner{F_t', S_t'}
    &= (r'-1)\binner{F_t', \Psi_{r'-1}(F_t)\oast F_t'}\\
    &=(r'-1)\binner{\Psi_{r'-1}(F_t), (\bA E_t)^{\oast 2} + (A_t E_t')^{\oast 2} +2(\bA E_t)\oast (A_t E_t')}
\end{eq}
Using Lemma~\ref{lem:asymp-aux-1}~\eqref{lem:asymp-aux-1-2}, 
\begin{eq}\label{eq:0.14-a}
\binner{\Psi_{r'-1}(F_t), (\bA E_t)^{\oast 2}} &= (n\mu)^{\frac{p}{r-1}+p-2} \Big[\binner{\1, \bd^{\oast 2}}+\OP(\varepsilon n^2\sigma^2)\Big] \\
& = (1+\OP(\varepsilon))(n\mu)^{\frac{p}{r-1}+p-2} \bm_2.
\end{eq}
Next, due to Lemma~\ref{lem:asymp-aux-1}~\eqref{lem:asymp-aux-1-2} and~\eqref{lem-item:app:1.13}, uniformly for any $\xx\in B_{\infty}(\varepsilon)$,
\begin{eq}
\binner{\1, (A_t(\xx\oast \bd))^{\oast 2}} &
=\OP\big( n(\log n)^2\frac{\sigma^2}{\mu} \bm_2 \big).
\end{eq}
Therefore, 
\begin{eq}
|\binner{\Psi_{r'-1}(F_t), (A_t E_t')^{\oast 2}}| 
= \OP\big((n\mu)^{\frac{p}{r-1}+p-2} \bm_2 \varepsilon'\big),
\end{eq}
where $\varepsilon'= \frac{\sigma^2}{\mu^3} \frac{(\log n)^2}{n}$ is as defined in~\eqref{eq:eps'-def-2}.
Finally, 
\begin{eq}\label{eq:app:1.4}
|\binner{\Psi_{r'-1}(F_t),(\bA E_t)\oast (A_t E_t')}| &\leq \max_i \big(\Psi_{r'-1}(F_t)\big)_i \times \binner{\1,|(\bA E_t)\oast (A_t E_t')|}\\
& \lesssim (n\mu)^{\frac{p}{r-1}-p}  \binner{\1,|(\bA E_t)\oast (A_t E_t')|} \\ 
& \leq (n\mu)^{\frac{p}{r-1}-p}  \binner{\1, (\bA E_t)^{\oast 2}}^{1/2} \binner{\1, (A_t E_t')^{\oast 2}}^{1/2} \\
& = \OP((n\mu)^{\frac{p}{r-1}+p-2} \bm_2\sqrt{\varepsilon'}). 
\end{eq}
Therefore, plugging the estimates in~\eqref{eq:app:local-2.12},
\begin{eq}\label{eq:app:ft'st'}
 \binner{F_t', S_t'} = (1+\OP(\sqrt{\varepsilon'}))(r'-1)(n\mu)^{\frac{p}{r-1}+p-2} \bm_2, 
\end{eq}
where we have used the fact that $\sqrt{\varepsilon'}\gg \varepsilon.$

\paragraph{Calculating $\binner{S_t,F''_t}$.}
Note, using \eqref{eq:app-ft''}, we get that
\begin{eq}
\binner{S_t,F''_t}  & = \binner{S_t,2\bA E_t' + A_t E_t''}.
\end{eq}
Now, due to~\eqref{eq:app:der-Et}, and Lemma~\ref{lem:asymp-aux-1}~\eqref{lem:asymp-aux-1-2} and~\eqref{lem:asymp-aux-1-3},
\begin{align*}
   \binner{S_t,\bA E_t'}
   &= (p-1)\binner{S_t, \bA(\Psi_{p-1}(A_t\1)\oast\bd)} = (p-1) (n\mu)^{\frac{p}{r-1} +p-2} \bm_2 (1+\OP(\varepsilon)),
\end{align*}
and 
\begin{align*}
\binner{S_t,A_t E_t''}  & = (p-1)(p-2) (n\mu)^{\frac{p}{r-1} +p-2}\bm_2 (1+\OP(\varepsilon)).
\end{align*}

\begin{proof}[Proof of Lemma~\ref{lem:app:b.3}]
Using~\eqref{eq:app:sdder-2} and the estimates derived in this section, we get that, uniformly over $t\in [0,1]$, 
\begin{align*}
    |s_t'|&\lesssim (n\mu)^{\frac{p}{r-1}- 1}n^{\frac{1}{r}}\sqrt{\log n}\cdot\frac{\sigma^2}{\mu}\\
    s_t'' &= (r'-1)(r'-1+p(p-1))(n\mu)^{\frac{p}{r-1}-2}n^{-1+\frac{1}{r}}\bm_2 (1+\OP(\sqrt{\varepsilon'})). 
\end{align*}
\end{proof}

\subsubsection{Derivative of $g_t$ as given in~\eqref{eq:ders-gt}}
The goal of this section is to prove the following lemma:
\begin{lemma}\label{lem:app:b.4}
Uniformly over $t\in [0,1]$, 
\begin{align*}
    |g_t'| &\lesssim  (n\mu)^{\frac{p}{r-1}} n^{\frac{1}{p}}\sqrt{\log n}\cdot\frac{\sigma^2}{\mu}\\
    g_t'' 
&=\Big[p-1 +(r'-1)\Big(p(p-1) +\frac{1}{r-1} +1\Big)\Big] (n\mu)^{\frac{p}{r-1}-1}n^{-1+\frac{1}{p}}\bm_2(1+ \OP(\sqrt{\varepsilon'})).
\end{align*}
\end{lemma}

Similar to Section~\ref{app:ssec-sders}, the proof of Lemma~\ref{lem:app:b.4} requires three terms: $\binner{\Psi_p(G_t), G_t'}$, $\binner{\Psi_{p-1}(G_t), (G_t')^{\oast 2}}$,
$\binner{\Psi_p(G_t), G_t''}$.
We will calculate the values of these terms in this section at an arbitrary point $t\in [0,1]$.
Recall~\eqref{eq:app:etftstgt}.

\paragraph{Calculating $\binner{\Psi_p(G_t), G_t'}$.}
\begin{align*}
    \binner{\Psi_p(G_t), G_t'}
    &=  \binner{\Psi_p(G_t), \bA S_t + A_t S_t'}\\
    &= \binner{\Psi_p(G_t), \bA S_t} + (r'-1)\binner{\Psi_p(G_t), A_t \big(\Psi_{r'-1} (F_t)\oast F_t'\big)}\\
    &=\binner{\Psi_p(G_t), \bA S_t} + (r'-1)\binner{\Psi_p(G_t), A_t \big(\Psi_{r'-1} (F_t)\oast (\bA E_t + A_t E_t')\big)}
\end{align*}
Therefore, from Lemma~\ref{lem:asymp-aux-1}~\eqref{lem:app-lem-item2},
\begin{align*}
    \binner{\Psi_p(G_t), \bA S_t} = (n\mu)^{\frac{p(p-1)}{r-1} + p-1 +\frac{p}{r-1}}\binner{\xx, \bA\yy}
    \lesssim (n\mu)^{\frac{p^2}{r-1} + p-1} \varepsilon n^{3/2}\sigma.
\end{align*}
Also, 
\begin{eq}
&\big|\binner{\Psi_p(G_t), A_t \big(\Psi_{r'-1} (F_t)\oast (\bA E_t + A_t E_t')\big)}\big|\\
&=\big|\binner{\Psi_{r'-1} (F_t)\oast(A_t\Psi_p(G_t)),   \bA E_t }\big| + \big|\binner{\Psi_{r'-1} (F_t)\oast(A_t\Psi_p(G_t)),   A_t E_t'}\big|\\
&\lesssim (n\mu)^{\frac{p(p-1)}{r-1} + p-1 +\frac{p}{r-1}}\big|\binner{\xx, \bA\yy}\big| +
\big|\binner{\big(A_t(\Psi_{r'-1} (F_t)\oast(A_t\Psi_p(G_t)))\big)\oast\Psi_{p-1}(A_t\1), \bA\1}\big|\\
&\lesssim (n\mu)^{\frac{p^2}{r-1} + p-1} \varepsilon n^{3/2}\sigma,
\end{eq}
where the last inequality uses Lemma~\ref{lem:asymp-aux-1}~\eqref{lem:app-lem-item2} again.

\paragraph{Calculating $\binner{\Psi_{p-1}(G_t), (G_t')^{\oast 2}}$.}
First, due to~\eqref{eq:app:Gtder},
\begin{eq}\label{eq:app2-1.8}
    \binner{\Psi_{p-1}(G_t), (G_t')^{\oast 2}}
    = \binner{\Psi_{p-1}(G_t), (\bA S_t)^{\oast 2} + (A_t S_t')^{\oast 2} + 2(\bA S_t)\oast (A_t S_t') }
\end{eq}
Similarly to \eqref{eq:0.14-a}, Lemma~\ref{lem:asymp-aux-1}~\eqref{lem:asymp-aux-1-2} yields
\begin{align}\label{eq:app2-1.9}
     \binner{\Psi_{p-1}(G_t), (\bA S_t)^{\oast 2}}
     = 
    (n\mu)^{\frac{p^2}{r-1} + p-2}\bm_2 (1+ \OP(\varepsilon)).
\end{align}
Now,
\begin{eq}\label{eq:0.14-a-2}
    &\binner{\Psi_{p-1}(G_t), (A_t S_t')^{\oast 2} }\\
    &\lesssim  \binner{\Psi_{p-1}(G_t), (A_t (\Psi_{r'-1} (F_t)\oast (\bA E_t + A_t E_t')))^{\oast 2} }\\
    &\lesssim 2\binner{\Psi_{p-1}(G_t), (A_t (\Psi_{r'-1} (F_t)\oast (\bA E_t)))^{\oast 2} +(A_t (\Psi_{r'-1} (F_t)\oast (A_t E_t')))^{\oast 2} }\\
    &\lesssim (n\mu)^{\frac{p(p-2)}{r-1} +p-2} \binner{\1, (A_t (\Psi_{r'-1} (F_t)\oast (\bA E_t)))^{\oast 2} +(A_t (\Psi_{r'-1} (F_t)\oast (A_t E_t')))^{\oast 2} },
\end{eq}
where the last inequality uses~\eqref{eq:app:etftstgt} and the fact that each term of $(A_t (\Psi_{r'-1} (F_t)\oast (\bA E_t)))^{\oast 2} $ and $(A_t (\Psi_{r'-1} (F_t)\oast (A_t E_t')))^{\oast 2}$ is nonnegative.
We will calculate the two terms in~\eqref{eq:0.14-a-2} separately. 
For the first term, we can write
\begin{eq}\label{app:eq: 1.08-a}
    \big|\binner{\1, (A_t (\Psi_{r'-1} (F_t)\oast (\bA E_t)))^{\oast 2} }\big|
    &=(n\mu)^{\frac{2p}{r-1} -2}\big|\binner{\1, (A_t (\xx \oast (\bA \yy)))^{\oast 2}}\big|\\
    &=  \OP\big((n\mu)^{\frac{2p}{r-1}}\varepsilon'\bm_2\big),
\end{eq}
where $\varepsilon'$ is defined in~\eqref{eq:eps'-def-2} and
the last equality uses Lemma~\ref{lem:asymp-aux-1}~\eqref{lem-item:app:1.13}. 

Next, using~\eqref{eq:app:der-Et} for the second term in \eqref{eq:0.14-a-2},
\begin{eq}\label{eq:app:1.21-1}
    |\binner{\1, (A_t (\Psi_{r'-1} (F_t)\oast (A_t E_t')))^{\oast 2} }|
    &\lesssim (n\mu)^{\frac{2p}{r-1} - 4} \sup_{\xx, \yy\in B_{\infty}(\varepsilon)} |\binner{\1, (A_t (\xx\oast (A_t (\yy \oast \bd))))^{\oast 2} }|\\
    &\lesssim (n\mu)^{\frac{2p}{r-1} - 4}\Big[
    |\binner{\1, \big(A_t^2\bd\big)^{\oast 2}}| + \varepsilon^2 \binner{\1, (A_t^2  |\bd|)^{\oast 2} }\Big].
\end{eq}
Now,
\begin{align*}
     \E|\binner{\1, \big(A_t^2\bd\big)^{\oast 2}}| 
     &= \sum_i \E\Big( \sum_{j, k, l} a_{ij}a_{jk} (a_{kl} - \mu)\Big)^2 \\
     &=\sum_i  \sum_{\substack{j, k, l \\ j',k',l'}} \E[a_{ij}a_{jk} (a_{kl} - \mu) a_{ij'}a_{j'k'} (a_{k'l'} - \mu)] = O(n^5\max(\mu^4\sigma^2, \mu^2\sigma^4)), 
\end{align*}
where, in the above sum, the expectation will be non-zero only if $\{k, l\}$ is same as one of $\{i,j\}, \{j,k\}, \{i,j'\}, \{j'k'\}, \{k',l'\}$, and, 
$\{k', l'\}$ is same as one of $\{i,j\}, \{j,k\}, \{i,j'\}, \{k, l\}, \{j', k'\}$.
There are at most $n^5$ such choices of indices and the main contribution comes from the case when there are 5 distinct indices.
In that case, each term is at most $O(\max(\mu^4\sigma^2, \mu^2\sigma^4))$.
Also, for the second term in~\eqref{eq:app:1.21-1}, using Lemma~\ref{lem:0.2}~\eqref{lem:0.2-3}, we get
\begin{align*}
    \E\Big[\varepsilon^2 \binner{\1, (A_t^2  |\bd|)^{\oast 2} }\Big] \lesssim n^5 \mu^2\sigma^4 \log^2 n. 
\end{align*}
Therefore, from~\eqref{eq:app:1.21-1}, we get
\begin{eq}\label{eq:app:2.27}
    |\binner{\1, (A_t (\Psi_{r'-1} (F_t)\oast (A_t E_t')))^{\oast 2} }|
   &= \OP\Big((n\mu)^{\frac{2p}{r-1}}\max \big\{n \sigma^2 , n\frac{\sigma^4}{ \mu^{2}}\log^2 n\big\}\Big)\\
   &= \OP\Big((n\mu)^{\frac{2p}{r-1}} \bm_2\max \big\{\frac{1}{n}, \frac{\sigma^2}{ n\mu^{2}}\log^2 n\big\}\Big) \\
   & = \OP((n\mu)^{\frac{2p}{r-1}} \bm_2 \varepsilon' ),
\end{eq}
where $\varepsilon'$ is given by \eqref{eq:eps'-def-2}.
Thus, plugging in the estimates from~\eqref{app:eq: 1.08-a} and~\eqref{eq:app:2.27} into~\eqref{eq:0.14-a-2}, we get
\begin{eq}\label{eq:app:1.22}
     |\binner{\Psi_{p-1}(G_t), (A_t S_t')^{\oast 2} }| = \OP\big((n\mu)^{\frac{p^2}{r-1} + p-2}\bm_2 \varepsilon'\big).
\end{eq}
Finally, similar to~\eqref{eq:app:1.4}, using~\eqref{eq:app2-1.9} and~\eqref{eq:app:1.22}, we can write that
\begin{eq}\label{eq:app:1.29}
\binner{\Psi_{p-1}(G_t), (\bA S_t)\oast (A_t S_t') } = \OP\big((n\mu)^{\frac{p^2}{r-1} + p-2}\bm_2\sqrt{\varepsilon'}\big).
\end{eq}
Therefore, using \eqref{eq:app2-1.9}, \eqref{eq:app:1.22}, and~\eqref{eq:app:1.29}, we get that uniformly over $t\in [0,1]$,
\begin{eq}
\binner{\Psi_{p-1}(G_t), (G_t')^{\oast 2}} = 
(n\mu)^{\frac{p^2}{r-1} + p-2}\bm_2 (1+ \OP(\sqrt{\varepsilon'})).
\end{eq}

\paragraph{Calculating $\binner{\Psi_p(G_t), G_t''}$.}
Using~\eqref{eq:app:Gtder},
\begin{eq}\label{eq:app:2.31}
\binner{\Psi_p(G_t), G_t''}
& = \binner{\Psi_p(G_t), 2\bA S_t' + A_t S_t''}\\
&=2(r'-1)\binner{\Psi_p(G_t), \bA (\Psi_{r'-1} (F_t)\oast F_t')} \\
&\hspace{1cm}
+ \binner{\Psi_p(G_t), A_t \big(\Psi_0(F_t)\oast\big[(r'-2)S_t'\oast F_t' + (r'-1) S_t\oast F_t''\big]\big)}
\end{eq}
As before, we will calculate the above terms separately.
\begin{eq}\label{eq:app:2.32}
    &\binner{\Psi_p(G_t), \bA (\Psi_{r'-1} (F_t)\oast F_t')}\\
    &= \binner{\Psi_p(G_t), \bA (\Psi_{r'-1} (F_t)\oast (\bA E_t + A_t E_t'))}\\
    &= \binner{\Psi_p(G_t), \bA (\Psi_{r'-1} (F_t)\oast (\bA E_t + (p-1) A_t(\Psi_{p-1}(A_t\1) \oast \bd)))}\\
    &= \binner{\Psi_p(G_t), \bA (\Psi_{r'-1} (F_t)\oast (\bA E_t ))}
    +(p-1)\binner{\Psi_p(G_t), \bA (\Psi_{r'-1} (F_t)\oast ( A_t(\Psi_{p-1}(A_t\1) \oast \bd)))}.
\end{eq}
For the first term in~\eqref{eq:app:2.32}, due to Lemma~\ref{lem:asymp-aux-1}~\eqref{lem:asymp-aux-1-2}
\begin{eq}
    \binner{\Psi_p(G_t), \bA (\Psi_{r'-1} (F_t)\oast (\bA E_t ))}
    &=(n\mu)^{\frac{p^2}{r-1} + p -2} \binner{\xx, \bA(\yy \oast (\bA\zz))}\\
    &= (n\mu)^{\frac{p^2}{r-1} + p -2}\bm_2 (1 + \OP(\varepsilon)).
\end{eq}
For the second term in~\eqref{eq:app:2.32}, 
\begin{eq}
    &|\binner{\Psi_p(G_t), \bA (\Psi_{r'-1} (F_t)\oast ( A_t(\Psi_{p-1}(A_t\1) \oast \bd)))}|\\
    &= (n\mu)^{\frac{p^2}{r-1} + p -3}|\binner{\yy\oast (\bA\xx), A_t(\zz \oast \bd)}|\\
    &\leq (n\mu)^{\frac{p^2}{r-1} + p -3} \big[\binner{\1, \big(\yy\oast (\bA\xx)\big)^{\oast 2}}\big]^{1/2} \big[\binner{\1, (A_t(\zz\oast\bd))^{\oast 2}}\big]^{1/2}\\
    &\leq (n\mu)^{\frac{p^2}{r-1} + p -3}
    \big[\binner{\yy^{\oast 2},  (\bA\xx)^{\oast 2}}\big]^{1/2}\times \big[\binner{\1, (A_t(\zz\oast\bd))^{\oast 2}}\big]^{1/2}\\
    &\lesssim \OP((n\mu)^{\frac{p^2}{r-1} + p -2}\bm_2\sqrt{\varepsilon'}),
\end{eq}
where in the last inequality, we have used 
Lemma~\ref{lem:asymp-aux-1}~\eqref{lem:asymp-aux-1-2} and~\eqref{eq:app-1.20}. 
Therefore,~\eqref{eq:app:2.32} yields
\begin{eq}\label{eq:app:1:30}
    \binner{\Psi_p(G_t), \bA (\Psi_{r'-1} (F_t)\oast F_t')} = (n\mu)^{\frac{p^2}{r-1} + p -2}\bm_2 (1 + \OP(\sqrt{\varepsilon'})).
\end{eq}
Next, from~\eqref{eq:ders-st}, note that 
\begin{eq}
     S_t'\oast F_t' 
    =  (r'-1)\Psi_{r'-1} (F_t)\oast (F_t')^{\oast 2},
\end{eq}
and thus, each term in $S_t'\oast F_t'$ is nonnegative, $\PR_0$-almost surely.
Therefore, we can write using~\eqref{eq:app:ft'st'},
\begin{eq}\label{eq:app:1.27-1}
    \binner{\Psi_p(G_t), A_t \big(\Psi_0(F_t)\oast S_t'\oast F_t'\big)} 
    &=  \binner{(A_t\Psi_p(G_t))\oast \Psi_0(F_t),  S_t'\oast F_t'} \\
    &= (n\mu)^{\frac{p(p-1)}{r-1}}\binner{\1, S_t'\oast F_t'}(1 + \OP(\varepsilon)) \\
    &= (r'-1)(n\mu)^{\frac{p^2}{r-1} + p -2}\bm_2 (1 + \OP(\sqrt{\varepsilon'})).
\end{eq}
Also, from~\eqref{eq:app-ft''} we get
\begin{eq}\label{eq:app:local-1.26}
    &\binner{\Psi_p(G_t), A_t \big(\Psi_0(F_t)\oast S_t\oast F_t''\big)}\\
    &= 2\binner{\Psi_p(G_t), A_t \big(\Psi_0(F_t)\oast S_t\oast [\bA E_t']\big)}
    +\binner{\Psi_p(G_t), A_t \big(\Psi_0(F_t)\oast S_t\oast [A_t E_t'']\big)}\\
    &= 2\binner{\Psi_p(G_t), A_t \big(\Psi_0(F_t)\oast S_t\oast [\bA E_t']\big)}\\
    &\hspace{1cm}+(p-1)(p-2)\binner{\Psi_p(G_t), A_t \big(\Psi_0(F_t)\oast S_t\oast [A_t (\Psi_{p-2}(A_t\1) \oast \bd^{\oast 2})]\big)}\\
    &=p(p-1) (n\mu)^{\frac{p^2}{r-1}+p-2}\bm_2(1+\OP(\varepsilon)),
\end{eq}
where in the last step, we have used Lemma~\ref{lem:asymp-aux-1}~\eqref{lem:asymp-aux-1-3} and that
\begin{align*}
    &\binner{\Psi_p(G_t), A_t \big(\Psi_0(F_t)\oast S_t\oast [\bA E_t']\big)} \\
    &= 
(p-1)\binner{(A_t\Psi_p(G_t))\oast\Psi_0(F_t)\oast S_t,  \bA (\Psi_{p-1}(A_t\1) \oast \bd)}\\
&= (p-1) (n\mu)^{\frac{p^2}{r-1}+p-2}\bm_2(1+\OP(\varepsilon)).
\end{align*}
Plugging in the values from~\eqref{eq:app:1:30},~\eqref{eq:app:1.27-1}, and~\eqref{eq:app:local-1.26} into~\eqref{eq:app:2.31},
\begin{eq}
\binner{\Psi_p(G_t), G_t''} 
&= 
(n\mu)^{\frac{p^2}{r-1} + p -2}\bm_2\Big[2(r'-1) + (r'-2)(r'-1) + (r'-1)p(p-1)\Big] (1 + \OP(\sqrt{\varepsilon'}))\\
&=(r'-1)\Big(p(p-1) +\frac{1}{r-1} +1\Big)(n\mu)^{\frac{p^2}{r-1} + p -2}\bm_2(1 + \OP(\sqrt{\varepsilon'})).
\end{eq}

\begin{proof}[Proof of Lemma~\ref{lem:app:b.4}]
Using~\eqref{eq:ders-gt} and the estimates derived in this section, we get that uniformly over all $t\in [0,1]$,
\begin{eq}\label{eq:app:gtder-arbitrary}
    |g_t'| &\lesssim  (n\mu)^{\frac{p}{r-1}} n^{\frac{1}{p}}\sqrt{\log n}\cdot\frac{\sigma^2}{\mu}\\
g_t'' 
&=\Big[p-1 +(r'-1)\Big(p(p-1) +\frac{1}{r-1} +1\Big)\Big] (n\mu)^{\frac{p}{r-1}-1}n^{-1+\frac{1}{p}}\bm_2(1+ \OP(\sqrt{\varepsilon'})).
\end{eq}
\end{proof}

\begin{proof}[Proof of Proposition~\ref{prop:direct-der}]
From~\eqref{eq:app:gtstder}, we can write
\begin{align*}
\frac{\dif}{\dif t} \eta_{n,t}( A_n)\bigg\vert_{t=0}
=\frac{d}{dt}\Big(\frac{g_t}{s_t}\Big) \bigg\vert_{t=0}
= n^{-1+\frac{1}{p} - \frac{1}{r}}\bm_1(1+o(1)).
\end{align*}
Also, using~\eqref{eq:app:etftstgt} and Lemmas~\ref{lem:app:b.3} and~\ref{lem:app:b.4}, we get
\begin{align*}
    \frac{\dif^2}{\dif t^2} \eta_{n,t}( A_n)
    =
    \frac{d^2}{dt^2}\Big(\frac{g_t}{s_t}\Big)=
    \Big[p-1 +\frac{1}{r-1}\Big]n^{-1+\frac{1}{p} - \frac{1}{r}}\frac{\bm_2}{n\mu}(1+ \OP(\sqrt{\varepsilon'})).
\end{align*}

\end{proof}}

\end{document}